\documentclass{article}
\usepackage{CJK,CJKnumb,CJKulem,times,dsfont,ifthen,mathrsfs,latexsym,amsfonts,color}
\usepackage{amsmath,amsthm,makeidx,fontenc,amssymb,bm,graphicx,psfrag,listings,curves,extarrows,enumitem}
\usepackage[
linktocpage=true,colorlinks,citecolor=magenta,linkcolor=blue,urlcolor=magenta]{hyperref}


\usepackage{cite}
\usepackage{geometry}
\geometry{left=3cm,right=3cm,top=2cm,bottom=2cm}

\usepackage{indentfirst}


\setlength{\parindent}{2em}

\usepackage{marvosym}

\usepackage{amssymb}
\makeatletter

\newcommand{\Rmnum}[1]{\expandafter\@slowromancap\romannumeral #1@}
\makeatother

\numberwithin{Assumption}{section} \numberwithin{Corollary}{section}
\numberwithin{Definition}{section} \numberwithin{equation}{section}
\numberwithin{Example}{section} \numberwithin{Lemma}{section}
\numberwithin{Proposition}{section} \numberwithin{Remark}{section}
\numberwithin{Theorem}{section}

\newtheorem{definition}{Definition}[section]
\newtheorem{theorem}{Theorem}[section]
\newtheorem{lemma}{Lemma}[section]
\newtheorem{corollary}{Corollary}[section]
\newtheorem{proposition}{Proposition}[section]
\newtheorem{remark}{Remark}[section]



\newcommand{\bt}{\begin{theorem}}
	\newcommand{\et}{\end{theorem}}
\newcommand{\bl}{\begin{lemma}}
	\newcommand{\el}{\end{lemma}}
\newcommand{\bd}{\begin{definition}}
	\newcommand{\ed}{\end{definition}}
\newcommand{\bc}{\begin{corollary}}
	\newcommand{\ec}{\end{corollary}}
\newcommand{\bp}{\begin{proof}}
	\newcommand{\ep}{\end{proof}}
\newcommand{\bx}{\begin{example}}
	\newcommand{\ex}{\end{example}}
\newcommand{\bi}{\begin{exercise}}
	\newcommand{\ei}{\end{exercise}}
\newcommand{\br}{\begin{remark}}
	\newcommand{\er}{\end{remark}}
\newcommand{\be}{\begin{equation}}
	\newcommand{\ee}{\end{equation}}
\newcommand{\bal}{\begin{align}}
	\newcommand{\bn}{\begin{enumerate}}
		\newcommand{\en}{\end{enumerate}}
	\newcommand{\ba}{\begin{align}}
		\newcommand{\ea}{\begin{align}}

			\newcommand{\bg}{\begin{align*}}
				\newcommand{\eg}{\end{align*}}
			\newcommand{\bcs}{\begin{cases}}
				\newcommand{\ecs}{\end{cases}}


			\newcommand{\N}{{\mathbb N}}

			\newcommand{\R}{{\mathbb R}}
			\newcommand{\bean}{\begin{eqnarray*}}
				\newcommand{\eean}{\end{eqnarray*}}


			
			\def\text#1{{\rm #1}}

			\def\dis{\displaystyle}

			
			\setitemize{itemindent=38pt,leftmargin=0pt,itemsep=-0.4ex,listparindent=26pt,partopsep=0pt,parsep=0.5ex,topsep=-0.25ex}

			\numberwithin{equation}{section}
			
			\begin{document}
				\theoremstyle{plain}

				\title{\bf   Normalized solutions of mass supercritical Schr\"{o}dinger-Poisson equation with potential \thanks{ 	
						E-mails: pxq52918@163.com (X. Q. Peng),   mrizzi1988@gmail.com (M. Rizzi)}}
				
				\date{}
				\author{
					{\bf    Xueqin Peng$^{\mathrm{a},}$ \textsuperscript{\Letter},\; Matteo Rizzi$^{\mathrm{b}}$}\\
					\footnotesize \it  $^{\mathrm{a}}$ Department of Mathematical Sciences, Tsinghua University, Beijing 100084, China\\
				\footnotesize \it	$^{\mathrm{b}}$ Mathematisches Institut, Justus-Liebig-University Giessen, Giessen 35392, Germany
				}

				\maketitle
				\begin{center}
					\begin{minipage}{130mm}
						\begin{center}{\bf Abstract }\end{center}
						
					In this paper we prove the existence of normalized solutions $(\lambda,u)\subset (0,\infty)\times H^1(\mathbb{R}^3)$ to the following Schr\"{o}dinger-Poisson equation
					\begin{equation*}
						\begin{cases}
							-\Delta u+V(x)u+\lambda u+(|x|^{-1}\ast u^2)u=|u|^{p-2}u\quad\text{in}\quad\mathbb{R}^{3},\\
						\dis	u>0,\quad \int_{\mathbb{R}^{3}}u^2dx=a^2,
						\end{cases}
					\end{equation*}
				where $a>0$ is fixed, $p\in(\frac{10}{3},6)$ is a given exponent and the potential $V$ satisfies some suitable conditions. Since the $L^2(\R^3)$-norm of $u$ is fixed, $\lambda$ appears as a Lagrange multiplier. For $V(x)\geq0$, our solutions are obtained by using a mountain-pass argument on bounded domains and a limit process introduced by Bartsch et al \cite{bm}. For $V(x)\leq0$, we directly construct an entire mountain-pass solution with positive energy. 

							%
							%

							\vskip0.23in
							{\bf Key words:}  Schr\"{o}dinger-Poisson equation; Normalized solution; Variational Methods.

							\vskip0.1in
							{\bf 2020 Mathematics Subject Classification:} 35J10, 35J50, 35J60.
							
							\vskip0.23in
							
						\end{minipage}
					\end{center}

					\vskip0.23in
					\section{Introduction}
					\setcounter{equation}{0}
					\setcounter{Assumption}{0} \setcounter{Theorem}{0}
					\setcounter{Proposition}{0} \setcounter{Corollary}{0}
					\setcounter{Lemma}{0}\setcounter{Remark}{0}
					\par
		Consider the following time-dependent Schr\"{o}dinger-Poisson equation
		\begin{equation}\label{eq1.1}
			i\partial_t\psi+\Delta\psi-(|x|^{-1}\ast |\psi|^2)\psi+g(|\psi|)\psi=0\qquad\text{in}~(0,\infty)\times\mathbb{R}^{3},
		\end{equation}
		where the function $\psi=\psi(t,x):(0,\infty)\times \mathbb{R}^3\rightarrow\mathbb{C}$ is complex valued and $g$ is real valued. This class of Schr\"{o}dinger-type equations with a repulsive nonlocal Coulomb potential are obtained as an approximation of the Hartree-Fock equation which is used to describe a quantum mechanical system of many particles, see \cite{L1977,L1987,M2001} for more physical background.
		\vskip 0.1in
		A stationary wave of (\ref{eq1.1}) is a solution of the form $\psi(t,x)=e^{i\lambda t}u(x)$, where $\lambda\in \mathbb{R}$ denotes the frequency. It is well-known that $e^{i\lambda t}u(x)$ solves (\ref{eq1.1}) if and only if $(\lambda,u)$ is a pair of solutions of the following equation
		\begin{equation}\label{eq1.2}
			-\Delta u+\lambda u+(|x|^{-1}\ast u^2)u=g(|u|)u\qquad\text{in}\,\R^3,
		\end{equation}
        We note that $\phi_u:=|x|^{-1}\ast u^2$ is the convolution of $u^2$ with the Green function of $-\Delta$ in $\R^3$, hence it fulfills $$-\Delta \phi_u=u^2\qquad\text{in}\,\R^3.$$
       
        If $\lambda\in \mathbb{R}$ is fixed in (\ref{eq1.2}), this kind of problem is known as the \textit{fixed frequency problem}, which has received many
		scholars' attention, and a great part of the literature is focused on existence, nonexistence and multiplicity of solutions to (\ref{eq1.2}) or similar problems, see \cite{He2020,wang1,PZAMP,pj,R2006} and the references therein. In those papers the authors apply variational methods. Similar methods to find solutions are applied in \cite{Tru, Rab1, SM, m1}.
		\vskip 0.1in
	On the other hand, it is also interesting to study the \textit{fixed mass problem}, that is looking for solutions with prescribed $L^2(\R^3)$-norm $\|u\|_2=a>0$. These kind of solutions, which have recently gained interest in physics, are known as \textit{normalized solutions}. Indeed, from the physical point of view, this approach turns out to be more meaningful since it offers a better insight of the variational properties of the stationary solutions of (\ref{eq1.2}), such as stability or instability. Compared to the fixed frequency case, the study of normalized solutions is more complicated because the terms in the corresponding energy functional scale differently, readers are invited to see \cite{BJ2013,BS2011,B2011,JL2013} for more details. Now we recall some known results in this direction. In \cite{B2011}, Bellazzini and Siciliano considered the following problem
		\begin{equation}\label{eq1.7}
			\begin{cases}
				-\Delta u+\lambda u+(|x|^{-1}\ast u^2)u=|u|^{p-2}u&\text{in}~\mathbb{R}^{3},\\
			\dis u>0,\int_{\mathbb{R}^3}u^2dx=a^2,
			\end{cases}
		\end{equation}
		for $p\in(2,3)$ and proved the existence of normalized solutions for $a>0$ small. After that, in \cite{BS2011}, they treated the case $p\in(3,\frac{10}{3})$ and proved that (\ref{eq1.7}) admits normalized solutions if $a>0$ is large enough. Later, in \cite{JL2013}, Jeanjean and Luo identified a threshold value of $a>0$ which separates existence and nonexistence of normalized solutions of (\ref{eq1.7}). It is noteworthy to point out that, in the \textit{$L^2$-subcritical case} $2<p<\frac{10}{3}$, one can obtain normalized solutions by considering the minimization problem
		\begin{equation*}
			\alpha(a):=\inf_{u\in \mathcal{S}_a}I(u),
		\end{equation*}
		where
		$I(u)$ is the energy functional corresponding to (\ref{eq1.7}) defined by
		\begin{equation*}
			I(u)=\frac{1}{2}\int_{\mathbb{R}^3}|\nabla u|^2dx+\frac{1}{4}B(u)
			-\frac{1}{p}\int_{\mathbb{R}^3}|u|^pdx\qquad\forall\,u\in H^1(\R^3),
		\end{equation*}
being
\begin{equation}
    B(u):=\int_{\mathbb{R}^3}\int_{\mathbb{R}^3}\frac{|u(x)|^2|u(y)|^2}{|x-y|}dxdy\qquad\forall\,u\in H^1(\R^3),
\end{equation}        
		and $\mathcal{S}_a$ is the $L^2$-constraint
		\begin{equation}\label{eq1.6}
			\mathcal{S}_a=\{u\in H^1(\mathbb{R}^3):~\|u\|_2^2=a^2\}.
		\end{equation}
		However, in the \textit{$L^2$-supercritical case} $\frac{10}{3}<p\leq 6$, the functional $I$ is no more bounded from below on $\mathcal{S}_a$ and the minimization method fails. For quite a long time, \cite{BJ2013} was the only paper dealing with the existence of normalized solutions for the $L^2$-supercritical case. The main difficulties and challenges are based on several aspects: (i)  the classical mountain-pass theorem is not applicable to construct a  $(PS)$ sequence; (ii) since $\lambda$ is unknown, the Nehari manifold cannot be used anymore, which brings more obstacles to prove the boundedness of the $(PS)$ sequence; (iii)  the compactness of the $(PS)$ sequence is more challenging even if one considers the radial case since the embedding of $H^1_r(\R^3)\hookrightarrow L^2(\R^3)$ is not compact.  
		\vskip0.1in
		Since we are interested in the \textit{$L^2$-supercritical case}, we now give more related results that inspire us to study our problem. In \cite{BJ2013}, Bellazzini, Jeanjean and Luo studied the normalized solutions by introducing a Nehari-Pohozaev manifold. The benefit of working on the Nehari-Pohozaev manifold is that $I(u)$ is coercive. They proved the existence of solutions to (\ref{eq1.7}) for $a>0$ sufficiently small. The smallness condition about $a$ is crucial and  it is used to show that the Lagrange multiplier $\lambda$ is positive which leads to compactness. Later, Luo \cite{L} studied  the multiplicity of normalized solutions by using the Fountain Theorem established by Bartsch and De Valeriola \cite{B2013}.  Chen \cite{CST} et al. generalized the results to more general non-linearities. Very recently, Jeanjean and Le in \cite{JL2021} investigated the following Schr\"{o}dinger-Poisson-Slater equation
		\begin{equation}\label{eq1.4}
			-\Delta u+\lambda u-\gamma (|x|^{-1}\ast |u|^2)u-b|u|^{p-2}u=0\qquad\text{in}~\mathbb{R}^3,
		\end{equation}
		with $p\in(\frac{10}{3},6]$, $\gamma,b\in\mathbb{R}$ and $\|u\|_2=a$ for some given $a>0$. They obtained existence and nonexistence results in the cases $(\gamma<0,b<0)$, $(\gamma>0,b>0)$ and $(\gamma>0,b<0)
$. In particular, for the case $\gamma>0,b>0$, they obtained two normalized solutions, one being a local minimizer and the other one being a mountain pass type solution for $a>0$ small.
		\vskip 0.1in
		The existence and multiplicity of normalized solutions to the autonomous Schr\"{o}dinger-Poisson equation are widely studied in the literature, but the non-autonomous case, that is $V(x)\neq0$, is less understood. From the perspective of physics, the presence of $V(x)$ is very important because it represents an external potential that influences the behavior of the stationary waves. From the mathematical point of view, the presence of $V$ brings more difficulties, especially in recovering compactness. In fact, $V$ is not required to be radial, hence the solutions are not expected to be radial, thus we cannot use the compactness of the embedding $H^1_{rad}(\R^3)\subset L^p(\R^3)$ for $2<p<6$. For this reason, we are led to consider the following problem
		\begin{equation}\label{p}
			\begin{cases}
				-\Delta u+V(x)u+\lambda u+(|x|^{-1}\ast|u|^2)u=|u|^{p-2}u&\text{in}~\mathbb{R}^{3},\tag{$\mathcal{P}$}\\
			\dis 	u>0,~ \int_{\mathbb{R}^{3}}u^2dx=a^2,
			\end{cases}
		\end{equation}
		where $a>0$, $V(x)$ is a fixed potential, $\frac{10}{3}<p<6$ and $\lambda\in \mathbb{R}$ is a Lagrange multiplier. The study of normalized solutions of (\ref{p}) is equivalent to looking for critical points of the functional
		\begin{equation}\label{eq1.5}
			J_{V}(u)=\frac{1}{2}\int_{\mathbb{R}^3}|\nabla u|^2dx+\frac{1}{2}\int_{\mathbb{R}^3}V(x)u^2dx+\frac{1}{4}B(u)
			-\frac{1}{p}\int_{\mathbb{R}^3}|u|^pdx
		\end{equation}
		on the constraint $\mathcal{S}_a$. Our purpose is to find suitable conditions on $V$ to prove existence of normalized solutions to \eqref{p}. Basically, we will distinguish two cases: $V(x)\geq0$ and $V(x)\leq0$. In both cases we will use a mountain-pass argument, with some relevant differences. In fact, in the case $V(x)\ge 0$, we will construct solutions in domains of the form $\Omega_r:=r\Omega$, where $\Omega\subset\R^3$ is a fixed bounded convex open set and then we will pass to the limit as $r\to\infty$ to prove existence of entire solutions in $\R^3$. On the other hand, if \( V(x) \leq 0 \), we directly construct entire solutions using the mountain-pass geometry of the functional $J_V$ and a classical Ghoussoub min-max principle described in \cite[Section 5]{G1993}, which enables us to construct a $(PS)$ sequence. 
		\vskip 0.1in
		Before formulating our main theorems, we first recall some known results. For the limit equation (\ref{eq1.7}), 		
		the existence of normalized solutions is studied in \cite{BJ2013}, at least for $a>0$ small enough. 
        We state the following result established by Bellazzini, Jeanjean and Luo.
		\begin{theorem}\label{th1}(\cite{BJ2013})
		Let $p\in(\frac{10}{3},6)$. Then there exists $a_0>0$ such that for any $a\in(0,a_0)$, there exists a solution $( u_a,\lambda_{a})\in H^1(\mathbb{R}^3)\times (0,\infty)$ of \eqref{eq1.7} with $I(u_a)=c_a$, where
			\begin{equation*}
				c_a=\inf_{g\in \mathcal{G}_a}\max_{t\in[0,1]}I(g(t))>0
			\end{equation*}
			\begin{equation*}
				\mathcal{G}_a=\{g\in C([0,1],\mathcal{S}_a):g(0)\in A_{K_a},I(g(1))<0\}
			\end{equation*}
			and $A_{K_a}=\{u\in \mathcal{S}_a:\|\nabla u\|_2^2\leq K_a\}$, for some constant $K_a>0$ small enough.
		\end{theorem}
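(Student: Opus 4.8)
Since $p\in(\frac{10}{3},6)$ is mass-supercritical, the functional $I$ is unbounded from below on $\mathcal{S}_a$, so minimization fails and one must extract a solution from a mountain-pass geometry. The natural device is the mass-preserving dilation $s\star u:=e^{3s/2}u(e^{s}\cdot)$, under which $\|s\star u\|_2=\|u\|_2$, $\|\nabla(s\star u)\|_2^2=e^{2s}\|\nabla u\|_2^2$, $B(s\star u)=e^{s}B(u)$ and $\|s\star u\|_p^p=e^{3(p-2)s/2}\|u\|_p^p$. Hence $I(s\star u)=\frac{e^{2s}}{2}\|\nabla u\|_2^2+\frac{e^{s}}{4}B(u)-\frac{e^{3(p-2)s/2}}{p}\|u\|_p^p$; since $3(p-2)/2>2$ this tends to $-\infty$ as $s\to+\infty$, which both explains the supercriticality and supplies, for each $u$, a path leaving $A_{K_a}$ and reaching $\{I<0\}$. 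The associated Pohozaev functional is $P(u):=\frac{d}{ds}I(s\star u)\big|_{s=0}=\|\nabla u\|_2^2+\frac{1}{4}B(u)-\frac{3(p-2)}{2p}\|u\|_p^p$, and every solution must lie on the Nehari--Pohozaev manifold $\{P=0\}$.

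First I would set up the geometry. Using the Gagliardo--Nirenberg inequality $\|u\|_p^p\lesssim\|\nabla u\|_2^{3(p-2)/2}\|u\|_2^{\,p-3(p-2)/2}$ together with $3(p-2)/2>2$, one checks that for $\|\nabla u\|_2^2\le K_a$ with $K_a$ small the quadratic gradient term dominates, so $I>0$ on $A_{K_a}$ with a uniform positive lower bound on a sphere $\|\nabla u\|_2^2=K_a'$ slightly inside. Combined with the existence of endpoints where $I<0$ (take $s$ large in $s\star u$), this yields the mountain-pass geometry and the positivity $c_a=\inf_{\mathcal{G}_a}\max_{[0,1]}I>0$.

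The crux is producing a \emph{bounded} Palais--Smale sequence. Here I would use Jeanjean's device: work with the augmented functional $\widetilde{I}(s,u):=I(s\star u)$ on $\mathbb{R}\times\mathcal{S}_a$, whose mountain-pass level coincides with $c_a$, and apply a min-max principle as in \cite{G1993} to obtain a Palais--Smale sequence $(s_n,v_n)$; setting $u_n:=s_n\star v_n$ gives a $(PS)$ sequence for $I$ on $\mathcal{S}_a$ at level $c_a$ that \emph{additionally} satisfies $P(u_n)\to0$. Boundedness then follows from the algebraic identity $I(u_n)-\frac{2}{3(p-2)}P(u_n)=\frac{3p-10}{6(p-2)}\|\nabla u_n\|_2^2+\frac{3p-8}{12(p-2)}B(u_n)$, whose coefficients are both strictly positive for $p\in(\frac{10}{3},6)$; since the left-hand side converges to $c_a$, the sequence $\{\|\nabla u_n\|_2\}$ is bounded, and as $\|u_n\|_2=a$ the sequence $\{u_n\}$ is bounded in $H^1(\R^3)$.

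The main obstacle is compactness, since $H^1(\R^3)\hookrightarrow L^p$ is not compact and the solutions need not be radial. Testing the limiting equation against $u_n$ and eliminating $\|u_n\|_p^p$ via $P(u_n)\to0$ yields the Lagrange multiplier $\lambda_n a^2=\frac{6-p}{3(p-2)}\|\nabla u_n\|_2^2+\frac{12-5p}{6(p-2)}B(u_n)+o(1)$, where the gradient coefficient is positive but the nonlocal one is negative. Here the smallness condition $a<a_0$ enters decisively: by Hardy--Littlewood--Sobolev and interpolation one has $B(u)\lesssim a^{3}\|\nabla u\|_2$, so for $a$ small the nonlocal contribution is a lower-order perturbation of the gradient term, forcing $\lambda_a>0$ in the limit. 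With $\lambda_a>0$ the operator $-\Delta+\lambda_a$ is coercive, and a concentration-compactness / Brezis--Lieb analysis (after possible translations) rules out vanishing and dichotomy, yielding strong convergence $u_n\to u_a$ in $H^1(\R^3)$ and hence a nontrivial solution with $I(u_a)=c_a$. Replacing $u_a$ by $|u_a|$ and invoking the maximum principle gives $u_a>0$. I expect this last step --- establishing $\lambda_a>0$ and thereby recovering compactness --- to be the genuinely hard part of the argument.
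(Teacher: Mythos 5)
The paper never proves this statement: Theorem \ref{th1} is quoted as background directly from \cite{BJ2013}, so there is no internal proof to compare against, and the relevant benchmark is the argument of that cited work. Your sketch faithfully reconstructs exactly that argument --- the mass-preserving scaling and Pohozaev functional $P$, the mountain-pass geometry via Gagliardo--Nirenberg, Jeanjean's augmented functional producing a Palais--Smale sequence with $P(u_n)\to 0$, boundedness from the identity $I(u_n)-\frac{2}{3(p-2)}P(u_n)=\frac{3p-10}{6(p-2)}\|\nabla u_n\|_2^2+\frac{3p-8}{12(p-2)}B(u_n)$, and positivity of the Lagrange multiplier for small $a$ (using $B(u)\lesssim a^3\|\nabla u\|_2$ together with a lower bound on $c_a$) to restore compactness --- and is correct in outline.
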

		\begin{remark}
			In \cite{BJ2013}, in order to obtain
			a bounded $(PS)$ sequence, the authors introduced the auxiliary functional
			\begin{equation}\label{eq1.9}
				P(u):= \int_{\mathbb{R}^3}|\nabla u|^2dx+\frac{1}{4}B(u)
				-\frac{3(p-2)}{2p}\int_{\mathbb{R}^3}|u|^pdx.
			\end{equation}
			given by a linear combination of the Nehari and the Pohozaev constraints. This is relevant because, introducing the scaling
			\begin{equation}\label{scaling}
				u^t(x):=t^{\frac{3}{2}}u(tx)~\text{for}~t>0,\,u\in \mathcal{S}_a,
			\end{equation}
			we can see that $u^t\in \mathcal{S}_a$ for any $t>0$ so that, differentiating $I(u^t)$ with respect to $t$, we can prove that any solution $u$ to (\ref{eq1.7}) fulfills $P(u)=0$ (see also Lemma \ref{lemma-pohozaev} below). 		
		Moreover, in \cite{BJ2013} the authors also proved that, setting
			\begin{equation*}
				\mathcal{V}(a):=\{u\in \mathcal{S}_a:~P(u)=0\},
			\end{equation*}
		the solution $u_a$ constructed in Theorem \ref{th1}
satisfies			\begin{equation}\label{e2.2}
				c_a=I(u_a)=\color{black}\inf_{u\in \mathcal{V}(a)}I(u).
			\end{equation}
		\end{remark}
		
		
		As we mentioned above, the geometry of \( J_V \) is significantly influenced by the properties of \( V \). In the sequel, we will consider both non-negative and non-positive potentials separately. First, we impose the following conditions on $V(x)$:
		\vskip 0.1in
	\noindent	$(V_1)$ 
		$V\in C^1(\R^3),\,V\ge 0$, $\lim_{|x|\to\infty}V(x)=0$ and the function $W:x\mapsto \nabla V(x)\cdotp x\in L^{\infty}(\mathbb{R}^3)$. Moreover there exist $\theta \in(0,1),\eta\in (0,\infty)$ with $\eta+2\theta<\frac{6-p}{p-2}$ such that
		\begin{equation}\label{est-V-infty}
			\|V\|_\infty<\frac{2 \theta c_a}{a^2}\quad \text{and}\quad 
			\|W\|_\infty<\frac{c_a}{a^2}\eta.
		\end{equation}
		Alternatively, 
        we can assume that\\

      \noindent  $(V_1)'$ $V\in W^{3/2}_{loc}(\R^3),\,V\ge 0,\, \lim_{|x|\to\infty}V(x)=0$, there exists $q\in(3,\infty)$ such that $V,W\in L^q(\R^3)$.\\

        In addition to $(V_1)'$, we will need some smallness assumption about $\|V\|_q$ and $\|W\|_q$, which will be explained in Section $3$. Such a bound can be made explicit and depends on $p$ and $q$ only. However, for the reader's convenience we prefer to postpone the discussion of such a technical issue to Section $3$.\\
        
        Moreover, we also assume the following condition:
		\vskip 0.1in
\noindent $(V_2)$ There exist $\alpha,\,\rho\in(0,1)$ such that
		$$\limsup_{|y|\to\infty}\left(|y|^\alpha{{\rm ess} \sup}_{x\in B_{\delta|y|}(y)}\nabla V(y)\cdotp x\right) <0.$$
        
        \begin{remark}\label{remVne0}
        \begin{itemize}
        \item We note that assumption $(V_2)$ implies that $V\ne 0$. 
        \item We note that the essential $\sup$ reduces to the usual $\sup$ if $V\in C^1(B_{\delta|y|}(y))$. However, since we are interested in potentials that are not necessarily in $C^1(\R^3)$, we prefer to use the notion of ${\rm ess}\sup$.
        \end{itemize}
       
        \end{remark}
        
    Now we will exhibit examples of potentials fulfilling $(V_1)$-$(V_2)$ and examples of potentials fulfilling $(V_1)'$-$(V_2)$. We are also interested in non-radial potentials fulfilling these conditions, since in the radial case the proofs would be much easier, due to the compactness of the embedding $H^1_{rad}(\R^3)\subset L^p(\R^3)$, for $2<p<6$. However, our results do not require any radial symmetry, neither of the potential nor of the solutions.\\ 
    
    We note that condition $(V_2)$ is fulfilled if, for instance, $V(x)=(1+|x|)^{-\alpha}$ with $\alpha\in(0,1)$. In fact if we can see that
    $$|y|^\alpha \nabla V(y)\cdotp x=-\alpha|y|^\alpha(1+|y|)^{-\alpha-1}\frac{y}{|y|}\cdotp x\le-\alpha|y|^\alpha(1+|y|)^{-\alpha-1}\frac{y}{|y|}\cdotp((1+\rho)y)=-\alpha(1+\rho)\frac{|y|^{\alpha+1}}{(1+|y|)^{\alpha+1}},$$
    for any $\rho\in (0,1)$, $y\in\R^3\setminus B_1(0)$ and $x\in B_{\rho|y|}(y)$. Multiplying by a sufficiently small constant $c>0$, the potential $cV$ also fulfills $(V_1)$. Taking a non-constant $C^1$ function $\varphi:S^2\to(0,\infty)$ defined on the unit sphere with $\sup_{x\in S^2}|\nabla \varphi(x)|$ small enough, we get a non-radial potential $c\varphi\left(\frac{x}{|x|}\right) V(x)$ which satisfies $(V_1)-(V_2)$ provided $c>0$ is small enough.\\

    In order to show an example of potential $V$ satisfying $(V_1)'-(V_2)$, we take $q>3$, $\alpha\in\left(\frac{3}{q},1\right)$, $\beta\in\left(0,\frac{q}{3}\right)$ and set 
    $$V(x):=\begin{cases}
    c(1+|x|)^{-\alpha}\qquad\forall\,x\in B_1(0)\\
    2^{-\alpha}c|x|^{-\beta}\qquad\forall\,x\in\R^3\setminus B_1(0),
    \end{cases}$$
    where $c>0$ is a sufficiently small constant. We note that such a potential is unbounded in a neighbourhood of the origin. Multiplying it by $\varphi\left(\frac{x}{|x|}\right)$ as above and changing, if necessary, the value of $c$, we get an example of non-radial potential fulfilling $(V_1)'-(V_2)$.
    \vskip0.1in
    To summarize, in the case $V(x)\ge 0$ we have the following results.
        \begin{theorem}\label{theorem1.2} Assume that $\frac{10}{3}<p<6$ and $(V_1)-(V_2)$ hold. Then there exists a constant $a^*>0$ such that for any $a\in(0,a^*)$, (\ref{p}) possesses a solution $(\lambda,u)\in (0,\infty)\times H^1(\R^3)$ such that $J_V(u)\ge c_a>0$.
		\end{theorem}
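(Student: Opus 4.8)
The plan is to follow the bounded-domain approximation of Bartsch et al.\ \cite{bm}: first solve a Dirichlet problem on the dilated convex sets $\Omega_r=r\Omega$, where compactness comes for free from the compact embedding $H^1_0(\Omega_r)\hookrightarrow L^p(\Omega_r)$ (recall $p<6$), and then let $r\to\infty$. For fixed $r$ I would restrict $J_V$ to $\mathcal{S}_{a,r}:=\mathcal{S}_a\cap H^1_0(\Omega_r)$ and reproduce the mountain-pass geometry of Theorem \ref{th1}. The fiber map $t\mapsto J_V(u^t)$ with $u^t(x)=t^{3/2}u(tx)$ keeps $u^t\in\mathcal{S}_{a,r}$ for $t\ge 1$ (since $\Omega_{r/t}\subset\Omega_r$ by convexity and $0\in\Omega$), and because $\tfrac{3(p-2)}{2}>2$ when $p>\tfrac{10}{3}$ the $L^p$ term forces $J_V(u^t)\to-\infty$ as $t\to\infty$; meanwhile the set $A_{K_a}$ of small gradient stays a potential well once $\|V\|_\infty$ is small as in $(V_1)$. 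This produces a mountain-pass value $c_{a,r}$, and the compact embedding yields the constrained Palais--Smale condition, so $c_{a,r}$ is attained at some $u_r\in\mathcal{S}_{a,r}$ solving the Dirichlet version of (\ref{p}) on $\Omega_r$ with a Lagrange multiplier $\lambda_r$.

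Next I would establish estimates uniform in $r$. Scaling shows that the relevant constraint is the perturbed Nehari--Pohozaev functional, and the Pohozaev identity for the Dirichlet problem gives
\begin{equation*}
P_V(u_r):=\|\nabla u_r\|_2^2-\tfrac12\int_{\Omega_r}W(x)u_r^2\,dx+\tfrac14 B(u_r)-\tfrac{3(p-2)}{2p}\int_{\Omega_r}|u_r|^p\,dx=\mathcal{B}_r,
\end{equation*}
where $\mathcal{B}_r$ is a boundary integral over $\partial\Omega_r$ whose sign is fixed by convexity, since $x\cdot\nu\ge 0$ there. Since $V\ge 0$ forces $J_V\ge I$ along admissible paths, one gets $c_{a,r}\ge c_a>0$, while translating a near-optimal path for the limit problem into a far region where $V$ is small bounds $c_{a,r}$ from above. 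Combining $J_V(u_r)=c_{a,r}$ with the identity $P_V(u_r)=\mathcal{B}_r$, the favorable sign of $\mathcal{B}_r$, and the smallness bounds of $(V_1)$ (where the constraint $\eta+2\theta<\tfrac{6-p}{p-2}$ enters), I would extract a uniform $H^1(\R^3)$ bound on the zero-extensions of $u_r$. Testing the equation with $u_r$ then controls $\lambda_r$ from above and, crucially, from below: using the smallness of $a$ exactly as in \cite{BJ2013}, one obtains $\lambda_r\ge\underline{\lambda}>0$ uniformly. This positivity is the reason $a$ must be small.

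The final step is the limit $r\to\infty$. Up to a subsequence $u_r\rightharpoonup u$ in $H^1(\R^3)$, and $u$ solves (\ref{p}) on $\R^3$ with $\lambda=\lim\lambda_r>0$; the heart of the matter is to prove $\|u\|_2=a$, i.e.\ to exclude loss of mass. Vanishing is ruled out by $\lambda_r\ge\underline{\lambda}>0$ together with $c_{a,r}\ge c_a>0$, which keeps $\int|u_r|^p$ bounded away from zero. The delicate alternative is dichotomy, with a chunk of mass escaping along centres $y_r$, $|y_r|\to\infty$; translating by $y_r$ would produce a nontrivial solution of the \emph{limit} problem, since $V(\cdot+y_r)\to 0$. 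This is precisely what $(V_2)$ forbids: the sign condition on $|y|^\alpha\,\nabla V(y)\cdot x$ at infinity, inserted into a localized virial identity built from the $-\tfrac12\int W u_r^2$ term, yields a contribution of definite sign that is incompatible with mass concentrating near infinity, while the convexity sign of $\mathcal{B}_r$ prevents concentration on $\partial\Omega_r$. Hence the mass stays in a fixed ball, giving strong convergence in $L^2\cap L^p$, so $\|u\|_2=a$ and $J_V(u)=\lim c_{a,r}\ge c_a>0$. Positivity of $u$ follows by running the argument with $|u_r|$ and applying the strong maximum principle.

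I expect this last step---closing the concentration-compactness alternative by excluding escape of mass through $(V_2)$ while simultaneously controlling the boundary term $\mathcal{B}_r$ through convexity---to be the main obstacle, since it is here that the non-radial, non-autonomous structure of the problem genuinely bites and where the precise decay encoded in $(V_2)$ is indispensable.
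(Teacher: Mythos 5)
Your overall skeleton coincides with the paper's proof: solve a Dirichlet problem on $\Omega_r=r\Omega$, obtain bounds uniform in $r$, get $\lambda_r\ge\underline{\lambda}>0$ from the smallness of $a$, and pass to the limit $r\to\infty$, excluding escape of mass via $(V_2)$ inserted into a localized virial identity. However, there is a genuine gap at the very first step. You claim that the compact embedding $H^1_0(\Omega_r)\hookrightarrow L^p(\Omega_r)$ ``yields the constrained Palais--Smale condition, so $c_{a,r}$ is attained''. Compactness of the embedding only helps once you already have a \emph{bounded} $(PS)$ sequence; it does nothing to prevent unbounded $(PS)$ sequences, and in the mass-supercritical regime $p>\frac{10}{3}$ the boundedness of constrained $(PS)$ sequences is exactly the known obstruction (difficulty (ii) in the introduction): testing with $u_n$ involves the unknown multiplier $\lambda_n$, and the Pohozaev identity is not available for $(PS)$ sequences (on a bounded domain it holds only for genuine solutions, and then with a boundary term). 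The paper resolves this with the monotonicity trick: it introduces the family $E_{r,s}$, $s\in[\frac{1}{2},1]$, applies \cite[Theorem 1.5]{BCJS} to produce \emph{bounded} $(PS)$ sequences, hence (by compactness of the embedding) normalized solutions $u_{r,s}$, for almost every $s$; it then uses the Pohozaev identity of Lemma \ref{lemma-pohozaev} --- proved by the domain-scaling $u^t\in\mathcal{S}_{r/t,a}$, which notably produces \emph{no} boundary term --- to bound $\|\nabla u_{r,s}\|_2$ uniformly in $s$ and pass to $s\to1$. Without this device (or an equivalent one) your first step does not produce the solution $u_r$, and everything downstream collapses.

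There are also gaps in your final step, even granting the solutions $u_r$. The localized virial identity combined with $(V_2)$ is indeed the paper's mechanism (testing the equation with $\nabla u_n\cdot y^1_n$ on $\omega_n=B_{\delta|y^1_n|}(y^1_n)\cap B_{r_n}(0)$), but making the boundary terms negligible requires exponential decay estimates for $u_n$ and $\nabla u_n$ on annuli around $y^1_n$ and near $\partial B_{r_n}(0)$, obtained through an iteration scheme plus elliptic estimates; more importantly, your proposal never addresses the nonlocal term. The identity contains $\int_{\omega_n}u_n^2\,\nabla\phi_{u_n}\cdot y^1_n\,dx$, and showing that $|y^1_n|^\alpha$ times this quantity vanishes is precisely the extra difficulty of the Schr\"odinger--Poisson setting compared with \cite{bm,BQZ}; the paper needs the uniform regularity package (Propositions \ref{prop-L^q} and \ref{prop-L-infty}, Lemma \ref{lemma-est-nabla-phi}) and a pointwise decay estimate for $\nabla\phi_{u_n}$ on $\partial B_{\delta|y^1_n|}(y^1_n)$ to close this. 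Finally, your claim that the convexity sign of the boundary term $\mathcal{B}_r$ prevents concentration at $\partial\Omega_r$ is not a working argument: the paper proves ${\rm dist}(y^j_n,\R^3\setminus B_{r_n}(0))\to\infty$ directly from the fact that $u_n\equiv0$ outside $B_{r_n}(0)$ while each bubble $w^j$ carries a fixed positive mass, which is incompatible with a bubble centre remaining at bounded distance from the boundary.
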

        Similarly, in case $(V_1)'$ holds, we have the following result.
        \begin{theorem}\label{theorem1.3} Assume that $\frac{10}{3}<p<6$ and  $(V_1)'-(V_2)$ hold. Then there exists a constant $a^*>0$ and a constant $\kappa=\kappa(p,q)>0$ depending on $p$ and $q$ such that if $a\in(0,a^*)$ and $$\max\{\|V\|_q,\|W\|_q\}<\kappa,$$ 
        (\ref{p}) possesses a solution $(\lambda,u)\in (0,\infty)\times H^1(\R^3)$ such that $J_V(u)\ge c_a>0$.
		\end{theorem}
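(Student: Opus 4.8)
The plan is to follow the scheme already used for Theorem~\ref{theorem1.2}: for each $r>0$ construct a solution of $(\mathcal{P})$ on the dilated bounded convex domain $\Omega_r=r\Omega$ by a mountain-pass argument, and then pass to the limit as $r\to\infty$. The only substantial new difficulty is that $V$ need no longer be bounded, so every estimate that, in the proof of Theorem~\ref{theorem1.2}, relied on $\|V\|_\infty$ or $\|W\|_\infty$ must be replaced by an $L^q$--$L^{q'}$ duality argument, with $q'=q/(q-1)$. Since $q>3$ forces $q'<\frac{3}{2}$, and hence $2q'<3<6$, Hölder together with the Gagliardo--Nirenberg inequality gives, for a suitable exponent $\gamma=\gamma(q)\in(0,\frac12)$,
\[
\int_{\mathbb{R}^3}V u^2\,dx\le \|V\|_q\,\|u\|_{2q'}^2\le C\,\|V\|_q\,\|u\|_2^{2(1-\gamma)}\,\|\nabla u\|_2^{2\gamma},
\]
and the analogous bound with $W$ in place of $V$. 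As $2\gamma<1$, these are strictly subquadratic perturbations of $\|\nabla u\|_2^2$; consequently, once $\|V\|_q,\|W\|_q<\kappa$ is small they can be absorbed and do not destroy the mountain-pass geometry inherited from the unperturbed functional $I$ of Theorem~\ref{th1}. This is exactly where the threshold $\kappa=\kappa(p,q)$ enters: the Gagliardo--Nirenberg constant and exponent depend on $q$, while the size of the barrier to be preserved depends on $p$.

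First I would verify that $J_V$ keeps the mountain-pass geometry of $I$ on $\mathcal{S}_a$. Evaluating the estimate above on the sublevel set $A_{K_a}$ and along the scaling paths \eqref{scaling}, the perturbation $\frac12\int_{\mathbb{R}^3}V u^2$ stays small compared with $c_a$, so that the level
\[
c_{a,V}:=\inf_{g\in\mathcal{G}_a}\max_{t\in[0,1]}J_V(g(t))
\]
is positive and comparable to $c_a$ provided $a<a^*$ and $\kappa$ is small. Next, on each $\Omega_r$ I would carry out the mountain-pass argument for $J_V$ restricted to $\mathcal{S}_a\cap H^1_0(\Omega_r)$. The decisive advantage of the bounded domain is that the embedding $H^1_0(\Omega_r)\hookrightarrow L^s(\Omega_r)$ is compact for every $s\in[1,6)$, which removes the loss of compactness listed as difficulty (iii) in the Introduction and yields a genuine critical point $u_r$ with Lagrange multiplier $\lambda_r$ and $J_V(u_r)\ge c_a>0$.

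The next step is to obtain bounds uniform in $r$. Here the auxiliary functional
\[
P_V(u):=\int_{\mathbb{R}^3}|\nabla u|^2\,dx-\frac12\int_{\mathbb{R}^3}Wu^2\,dx+\frac14 B(u)-\frac{3(p-2)}{2p}\int_{\mathbb{R}^3}|u|^p\,dx,
\]
which arises by differentiating $J_V(u^t)$ at $t=1$, plays the role that $P$ plays in the Remark following Theorem~\ref{th1}. The solution $u_r$ satisfies the corresponding Pohozaev identity, in which a boundary term on $\partial\Omega_r$ appears; the convexity of $\Omega$ is precisely what forces this boundary contribution to have a favourable sign, and combining it with the energy identity $J_V(u_r)=c_{a,r}$ and the subquadratic control of the $V$- and $W$-terms yields a bound on $\|u_r\|_{H^1}$ independent of $r$. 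The most delicate part of this step is to prove that the multipliers obey $0<\lambda_0\le\lambda_r\le\Lambda$ with $\lambda_0$ independent of $r$: the strict positivity is exactly what the smallness of $a$, $\|V\|_q$ and $\|W\|_q$ is designed to guarantee, in the same spirit as the positivity of $\lambda$ in Theorem~\ref{th1}, and it is the cornerstone of the compactness argument.

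The main obstacle is the passage to the limit $r\to\infty$ and the recovery of compactness, namely showing that $u_r\rightharpoonup u$ in $H^1(\mathbb{R}^3)$ with $u\neq0$ and, above all, $\|u\|_2=a$, so that no mass escapes to infinity. The uniform lower bound $\lambda_r\ge\lambda_0>0$ is decisive: testing the equation with $u_r$ and exploiting the positive zeroth-order term $\lambda_r u_r$, one excludes vanishing by means of the positivity of the energy level $c_a$, and excludes dichotomy by a concentration-compactness splitting, the positive mass term preventing a fraction of the $L^2$-mass from drifting to infinity. For the $L^q$-potential one must in addition check that $\int V u_r^2\to\int V u^2$ and $\int W u_r^2\to\int W u^2$ along the sequence; this follows from local compactness of the embedding together with the decay $V\to0$ at infinity and the integrability $V,W\in L^q$, which render the far-field contributions uniformly small. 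Once compactness is secured, the limit $u$ is a nontrivial nonnegative solution of $(\mathcal{P})$ with $\|u\|_2=a$, multiplier $\lambda=\lim_r\lambda_r>0$ and $J_V(u)\ge c_a>0$; the strict positivity $u>0$ then follows from the strong maximum principle, completing the proof.
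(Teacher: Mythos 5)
Your overall scheme (mountain-pass solutions on the dilated convex domains $\Omega_r$, replacement of the $\|V\|_\infty$, $\|W\|_\infty$ estimates by $L^q$--duality with Gagliardo--Nirenberg/Sobolev absorption, Pohozaev-based uniform $H^1$ bounds with the favourable sign of the boundary term on star-shaped domains, and smallness of $a$, $\|V\|_q$, $\|W\|_q$ to get $\liminf_r\lambda_r>0$) is exactly how the paper adapts the proof of Theorem \ref{theorem1.2} to the hypothesis $(V_1)'$, so up to the limit $r\to\infty$ your proposal is essentially the paper's argument (your subquadratic Gagliardo--Nirenberg absorption is a harmless variant of the paper's quadratic Sobolev absorption, which is where the paper's threshold $3p-10-4C_q^2\|W\|_q>0$ and hence $\kappa(p,q)$ comes from).

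The genuine gap is in the final compactness step. You claim that the uniform positivity $\lambda_r\ge\lambda_0>0$ ``excludes dichotomy by a concentration-compactness splitting, the positive mass term preventing a fraction of the $L^2$-mass from drifting to infinity.'' This is false: in the splitting Lemma \ref{lem2.2} the escaping profiles $w^j$ solve the \emph{autonomous} limit equation $-\Delta w+\lambda w+(|x|^{-1}\ast w^2)w=|w|^{p-2}w$ with the \emph{same} $\lambda>0$, and such nontrivial solutions exist precisely when $\lambda>0$; a positive multiplier is therefore fully compatible with mass escaping to infinity. Nor can one argue by energy comparison as in the case $V\le 0$ (Theorem \ref{thm1.3}, where $m_{V,a}<c_a$ rules out bubbles of energy $\ge c_a$): here $V\ge 0$ forces $m_{r,1}(a)\ge c_a$, so the mountain-pass level sits \emph{above} the limit level and the comparison gives no contradiction. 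This is exactly the point where assumption $(V_2)$ must be used, and your proposal never actually invokes it (your only use of the potential's behaviour at infinity is to pass to the limit in $\int Vu_r^2$, which is a different, easier matter). The paper's argument is: assuming $k\ge1$ profiles escape with centers $y_n^1$, establish exponential decay of $u_n$ on annuli around $y_n^1$ and near $\partial B_{r_n}(0)$, test the equation with $\nabla u_n\cdot y_n^1$ (this requires the $W^{2,2}$ regularity of $u_r$ and the $W^{1,\infty}$ bound on $\phi_{u_n}$) to obtain a boundary-integral identity on $\omega_n=B_{\delta|y^1_n|}(y^1_n)\cap B_{r_n}(0)$, show the boundary terms and the nonlocal contribution $|y_n^1|^\alpha\int_{\omega_n}u_n^2\,\nabla\phi_{u_n}\cdot y_n^1\,dx$ vanish as $n\to\infty$, and then use the sign condition in $(V_2)$ to bound the left-hand side by $-K\int_{\omega_n}u_n^2\,dx$, which stays away from $0$ because $\liminf_n\int_{\omega_n}u_n^2\,dx\ge a_1^2>0$; this contradiction yields $k=0$, hence $u\in\mathcal{S}_a$. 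Without this (or a genuine substitute), your proof does not establish $\|u\|_2=a$, which is the heart of the theorem.
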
       
        We note that assumption $(V_2)$ implies that $V\ne 0$, as we observed in Remark \ref{remVne0}. However, in case $V\equiv 0$, Theorems \ref{theorem1.2} and \ref{theorem1.3} reduce to Theorem \ref{th1} of \cite{BJ2013}, hence the result still holds true.\\ 
        
        Now we consider the case $V(x)\leq0$, with some assumptions about $V(x)$:
		\vskip 0.1in
		\noindent $(V_3)$ $V(x)\leq 0$, $V(x)\not\equiv0$ and $\lim\limits_{|x|\rightarrow\infty}V(x)=\sup\limits_{x\in\R^3}V(x)=0$.\\
		$(V_4)$ $V(x)\in L^{\frac{3}{2}}(\R^3)$, $\Tilde{W}:x\rightarrow V(x)|x|\in L^{3}(\R^3)$ satisfying $\|V\|_{\frac{3}{2}}<\frac{1}{2}S$ and
		$$3\left(\frac{2(p-2)^2}{6-p}+(p-4)^+\right)S^{-1}\|V\|_{\frac{3}{2}}
		+4\left(\frac{3(p-2)^2}{6-p}+1\right)S^{-\frac{1}{2}}\|\tilde{W}\|_3<3p-10,$$
		where $$S:=\inf_{u\in D^{1,2}(\R^3)\setminus\{0\}}\frac{\|\nabla u\|^2_2}{\|u\|^2_6}=3\pi\left(\frac{\Gamma(3/2)}{\Gamma(3)}\right)^{2/3}>0$$
        is the Aubin-Talenti constant (see \cite{T}) and $\Gamma$ is the $\Gamma$-function. Equivalently, $S^{-1/2}$ is the best constant in the embedding $D^{1,2}(\mathbb{R}^3)\hookrightarrow L^6(\mathbb{R}^3)$.
		\begin{theorem} \label{thm1.3}Assume that $\frac{10}{3}<p<6$ and $(V_3)-(V_4)$ hold. Then there exists $a_*>0$ small such that for any $a\in(0,a_*)$, (\ref{p}) possesses a mountain-pass solution $(\tilde{\lambda},\tilde{u})\in(0,\infty)\times H^1(\R^3)$ such that $J_V(\tilde{u})>0$. 
		\end{theorem}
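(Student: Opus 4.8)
The plan is to construct a mountain-pass solution directly in $H^1(\R^3)$ for the functional $J_V$ when $V\le 0$. First I would establish the mountain-pass geometry: using the scaling $u^t(x):=t^{3/2}u(tx)$ from \eqref{scaling}, one checks that for fixed $u\in\mathcal{S}_a$ the gradient term $\|\nabla u^t\|_2^2$ scales like $t^2$, the potential and Coulomb terms scale sublinearly, and the $L^p$-term scales like $t^{\frac{3(p-2)}{2}}$ with exponent $\frac{3(p-2)}{2}>2$ precisely because $p>\frac{10}{3}$. Hence for $t$ small $J_V(u^t)>0$ near a sphere of small gradient norm, while $J_V(u^t)\to-\infty$ as $t\to\infty$. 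This yields the min-max level
\begin{equation*}
c_V:=\inf_{g\in\Gamma_a}\max_{t\in[0,1]}J_V(g(t))>0,
\end{equation*}
where $\Gamma_a$ is the class of paths analogous to $\mathcal{G}_a$ in Theorem~\ref{th1}. Since $V\le 0$ one expects $c_V\le c_a$, and a careful comparison using $(V_4)$ should give a strict bound ensuring $c_V>0$.

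Next I would produce a bounded Palais--Smale sequence at the level $c_V$ on the constraint $\mathcal{S}_a$. Because $\lambda$ is an unknown Lagrange multiplier, the standard Nehari manifold is unavailable, so I would follow the strategy recalled in the Remark after Theorem~\ref{th1}: use the auxiliary Pohozaev-type functional $P$ from \eqref{eq1.9} (suitably modified to include the $V$ and $W=\nabla V\cdot x$ terms) to extract a $(PS)$ sequence $(u_n)$ that \emph{asymptotically} satisfies the Pohozaev identity, i.e. $P_V(u_n)\to 0$. Concretely, I would invoke the Ghoussoub min-max principle cited in \cite[Section 5]{G1993}, applied to the augmented functional on $\mathbb{R}\times\mathcal{S}_a$ obtained by inserting the scaling parameter, to get a sequence with $J_V(u_n)\to c_V$, $J_V'|_{\mathcal{S}_a}(u_n)\to 0$ and $P_V(u_n)\to 0$. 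Boundedness of $(u_n)$ in $H^1$ then follows from combining $J_V(u_n)\to c_V$ with $P_V(u_n)\to 0$: the linear combination cancels the dangerous $\|u\|_p^p$ term and, using $(V_4)$ together with the Sobolev constant $S$ to absorb the potential contributions, one controls $\|\nabla u_n\|_2$. The explicit inequality in $(V_4)$ involving $\|V\|_{3/2}$, $\|\tilde W\|_3$ and $3p-10$ is exactly what is needed to keep the coefficient of $\|\nabla u_n\|_2^2$ positive in this estimate.

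Then I would pass to the limit. Up to a subsequence $u_n\rightharpoonup \tilde u$ in $H^1(\R^3)$, and testing the equation gives an associated weak limit $\tilde\lambda$ of the Lagrange multipliers; boundedness of $(u_n)$ and the relation $\lambda_n=\frac{1}{a^2}\langle J_V'(u_n),u_n\rangle+o(1)$ let me extract $\tilde\lambda$. I would prove $\tilde\lambda>0$: here the smallness of $a$ (the threshold $a_*$) plays the same role as in \cite{BJ2013}, forcing the multiplier to be strictly positive, which is what restores coercivity of the quadratic form $-\Delta+V+\tilde\lambda$ and hence compactness. With $\tilde\lambda>0$ and $V\le 0$ vanishing at infinity, the operator $-\Delta+V+\tilde\lambda$ has no essential spectrum below $\tilde\lambda>0$, so the nonlocal and $L^p$ terms, handled by Brezis--Lieb and the vanishing of $V$ at infinity via $(V_3)$, rule out dichotomy and vanishing of a Lions-type concentration-compactness argument; one concludes $u_n\to\tilde u$ strongly in $H^1$, so $\tilde u\in\mathcal{S}_a$ solves \eqref{p} with $J_V(\tilde u)=c_V>0$. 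Positivity of $\tilde u$ follows by replacing $u_n$ with $|u_n|$ and applying the strong maximum principle.

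The main obstacle I anticipate is the recovery of compactness, i.e. excluding loss of mass at infinity for the bounded $(PS)$ sequence without any radial symmetry or compact embedding to lean on. The key leverage is the strict positivity of $\tilde\lambda$, which must be extracted quantitatively from the smallness of $a$ and the structural constraints $(V_3)$--$(V_4)$; once $\tilde\lambda>0$ is secured, the spectral gap of $-\Delta+V+\tilde\lambda$ together with the decay of $V$ converts the weak limit into a strong limit. Establishing $\tilde\lambda>0$ and simultaneously the strict sign $c_V>0$ of the min-max level is where the delicate balance encoded in the inequality of $(V_4)$ will be decisive, and I expect the bulk of the technical work to concentrate there.
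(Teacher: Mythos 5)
Your construction of the mountain-pass geometry via the scaling $u^t$, the use of the Ghoussoub min-max principle on the augmented functional $(u,t)\mapsto J_V(u^t)$ over $\mathcal{S}_a\times\R$ to produce a $(PS)$ sequence that asymptotically satisfies a Pohozaev-type identity, the boundedness argument driven by the inequality in $(V_4)$, and the extraction of $\tilde\lambda>0$ for $a$ small all coincide with the paper's proof. The genuine gap is in the compactness step. You claim that once $\tilde\lambda>0$ is secured, ``the spectral gap of $-\Delta+V+\tilde\lambda$ together with the decay of $V$ converts the weak limit into a strong limit.'' This is not a valid mechanism: the fact that $-\Delta+V+\tilde\lambda$ has essential spectrum $[\tilde\lambda,\infty)$ does nothing to prevent dichotomy, i.e.\ a bubble $w(\cdot-y_n)$ with $|y_n|\to\infty$ solving the translation-invariant limit equation \eqref{lim-eq}; such a bubble is perfectly compatible with $\tilde\lambda>0$ and with $V\to 0$ at infinity, because the escaping profile never ``sees'' $V$. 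Positivity of $\tilde\lambda$ is needed to run the splitting machinery, but by itself it cannot exclude loss of mass at infinity in a problem with no radial symmetry.

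What actually closes the argument in the paper is an energy-counting scheme, and your proposal is missing its three ingredients: (i) the strict comparison $m_{V,a}<c_a$ (see \eqref{comparison}), proved by pushing a path of rescalings of the limit-problem solution $v_a$ through the mountain pass of $J_V$ and using $V\le 0$ --- you do mention $c_V\le c_a$, but you assign it the wrong role (``ensuring $c_V>0$'', which instead follows from the mountain-pass geometry alone), and you never use it where it is indispensable; (ii) the Splitting Lemma \ref{lem2.2}, which decomposes the $(PS)$ sequence for the unconstrained functional $J_{V,\tilde\lambda}$ as $u_n=\tilde u+\sum_{j=1}^k w^j(\cdot-y_n^j)+o(1)$ with additivity of mass and energy, applicable here precisely because $(V_4)$ gives $\|V^-\|_{3/2}<S$; and (iii) Lemma \ref{lemma4.2}, stating that \emph{any} normalized solution $v$ of \eqref{p} satisfies $J_V(v)\ge 0$ under $(V_3)$--$(V_4)$. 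Item (iii) is essential and absent from your sketch: in the splitting identity $m_{V,a}=J_V(\tilde u)+\sum_{j}I_{\lambda}(w^j)$ one needs $J_V(\tilde u)\ge 0$ together with $I(w^j)\ge c_{\|w^j\|_2}\ge c_a$ (monotonicity of $a\mapsto c_a$, \cite[Theorem 1.2-(ii)]{BJ2013}) to deduce $m_{V,a}\ge c_a$ whenever $k\ge 1$, which contradicts (i) and forces $k=0$, i.e.\ strong $H^1(\R^3)$ convergence and $\tilde u\in\mathcal{S}_a$. Without (i)--(iii), your concentration-compactness outline cannot conclude, and the heart of the theorem --- that the weak limit actually carries the full mass $a$ --- remains unproved.
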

		In the paper by Molle et al. \cite{MRV}, it was demonstrated that, if we consider the problem
        \begin{equation}\notag
        \begin{cases}
        -\Delta u+\lambda u+V(x)u=|u|^{p-2}u&\text{in}\,\R^N,\\
        \dis \dis
        u\ge 0,\,\int_{\R^N}u^2dx=a^2
        \end{cases}
        \end{equation}
        with $V\le 0$, in addition to the mountain-pass solution we also have a local minimizer with negative energy, at least under suitable assumptions about $V$. However, when attempting to construct a local minimizer with negative energy for \eqref{p} using the approach outlined in \cite{MRV}, the analysis of the concave-convex characteristics of the associated functional becomes significantly challenging due to the positivity of the non-local term. This issue remains unresolved, and we defer its exploration to future work.

		\color{black}\begin{remark}
			We recall the following two inequalities, which are essential for the main proof.
            \vskip 0.1in
			\begin{itemize}
				\item Gagliardo-Nirenberg inequality \cite{L1959}:  for any $N\geq 3$ and $p\in[2,2^*]$ we have
				\begin{equation}
					\|u\|_p\leq C(N,p)\|\nabla u\|_2^{\mu}\|u\|_2^{1-\mu},
				\end{equation}
				where $\mu=N(\frac{1}{2}-\frac{1}{p})$ and $2^*=\frac{2N}{N-2}$.
				\item Hardy-Littlewood-Sobolev inequality \cite{LL2001}: for $f\in L^p(\mathbb{R}^N)$, $g\in L^q(\mathbb{R}^N)$ and $0<s<N$,
				\begin{equation}
					\left|\int_{\mathbb{R}^N}\int_{\mathbb{R}^N}\frac{f(x)g(y)}{|x-y|^{s}}dxdy\right|\leq C(N,s,p,q)\|f\|_p\|g\|_q,
				\end{equation}
				where $p,q>1,\frac{1}{p}+\frac{1}{q}+\frac{s}{N}=2.$
			\end{itemize}
		\end{remark}

				\vskip0.1in

				The paper is organized as follows. Section \ref{secV_1} is dedicated to proving Theorem \ref{theorem1.2} under the conditions $(V_1)-(V_2)$, Section \ref{sec_thm1.3} is devoted to the proof of Theorem \ref{theorem1.3} if $(V_1)'-(V_2)$ hold, and in Section \ref{sec3}, we consider the non-positive potential case and  give the proof of Theorem \ref{thm1.3}.

				\medspace
				\vskip 0.15in
				 \textbf{Notations} 
				Throughout this paper, we make use of the following notations:
				\vskip 0.1in
				\begin{itemize}
					\item $L^p(\mathbb{R}^3)$ ($p\in[1,\infty))$) is the Lebesgue space equipped with the norm
					\begin{equation*}
						\|u\|_p=\left(\int_{\mathbb{R}^3}|u|^pdx\right)^{\frac{1}{p}};
					\end{equation*}
                     \item $L^{\infty}(\mathbb{R}^3)$  is the Lebesgue space equipped with the norm
                     \begin{equation*}
						\|u\|_{\infty}=\mathop{\text{ess~sup}}\limits_{x\in \mathbb{R}^3}|u(x)|;
					\end{equation*}
					\item $H^1(\mathbb{R}^3)$ denotes the usual Sobolev space endowed with the norm
					\begin{equation*}
						\|u\|=\left(\int_{\mathbb{R}^3}(|\nabla u|^2+|u|^2)dx\right)^{\frac{1}{2}};
					\end{equation*}
					\item $D^{1,2}(\mathbb{R}^3)$ is the Banach space given by
					\begin{equation*}
						D^{1,2}(\mathbb{R}^3)=\{u\in L^6(\mathbb{R}^3):\nabla u\in L^2(\mathbb{R}^3)\};
					\end{equation*}
					\item $\tilde{C}$, $c,\tilde{c}$ represent positive constants whose values may change from line to line;
					\item $``\rightarrow"$ and $``\rightharpoonup"$ denote the strong and weak convergence in the related function spaces respectively;
					\item $o(1)$ denotes the quantity that tends to $0$;
					\item For any $x\in\mathbb{R}^3$ and $r>0$, $B_r(x):=\{y\in\mathbb{R}^3:|y-x|<r\}$.
				\end{itemize}

				\vskip0.23in
			\section{The proof of Theorem \ref{theorem1.2}: differentiable non-negative potential}\label{secV_1}
	\setcounter{equation}{0}

	\setcounter{Assumption}{0} \setcounter{Theorem}{0}
	\setcounter{Proposition}{0} \setcounter{Corollary}{0}
	\setcounter{Lemma}{0}\setcounter{Remark}{0}
	\par

		Throughout this Section we will assume that $(V_1)$ holds. The strategy to prove Theorem \ref{theorem1.2} is the following: first we fix a bounded convex open set $\Omega\subset\R^3$ and solve the problem
			\begin{equation}
				\label{eq-Omega_r}
				\begin{cases}
					-\Delta u+V(x)u+\lambda u+(|x|^{-1}\ast u^2) u=|u|^{p-2}u\quad \text{in }\quad \Omega_r\\
				\dis	\dis u\in H^1_0(\Omega_r),\,\int_{\Omega_r}u^2dx =a^2,
				\end{cases}
			\end{equation}
                where $\Omega_r:=r\Omega$ for $r>0$ large enough, where $$\phi_u(x)=\int_{\R^3}\frac{u^2(y)}{|x-y|}dy,\qquad\forall\,x\in\R^3.$$
            is well defined by extending $u$ to $0$ in $\R^3\setminus\Omega_r$. In other words $\phi_u=|x|^{-1}\ast u$ is the convolution of $u$ with the Green function of the Laplacian in $\R^3$, hence it satisfies the equation
            $$-\Delta \phi_u=u^2\qquad\text{in}\,\R^3.$$
            Then we will prove that our solutions are bounded uniformly in $r$ in $H^1(\Omega_r)$, so that we can pass to the limit as $r\to\infty$ and prove existence of a normalized  solution to (\ref{p}) in the whole $\R^3$. We will see that the construction of solutions in a large bounded domain does not require condition $(V_2)$, which is only used to pass to the limit to have a normalized  solution in $\R^3$.
			
			\begin{theorem}\label{th-bd-V_1}
			Assume that $(V_1)$ holds.	Let $\Omega\subset\R^3$ be a convex bounded open set and $a_0>0$ be given in Theorem \ref{th1}.  Then for any $a\in(0,a_0)$, there exists $r_a>0$ such that, for any $r\geq r_a$, 
				Problem \eqref{eq-Omega_r} has a solution $(\lambda_{r},u_{r})\in\R\times H^1_0(\Omega_r)$ with $u_{r}\ge 0$ in $\Omega_r$.
			\end{theorem}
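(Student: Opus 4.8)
The plan is to realize $(\lambda_r,u_r)$ as a constrained critical point of the restricted functional $J_V^r:=J_V|_{H_0^1(\Omega_r)}$ on the sphere $\mathcal{S}_a^r:=\mathcal{S}_a\cap H_0^1(\Omega_r)$, the crucial advantage of the bounded domain being that $H_0^1(\Omega_r)\hookrightarrow L^q(\Omega_r)$ is compact for every $q\in[2,6)$, so that bounded Palais--Smale sequences are automatically precompact. We may assume $0\in\Omega$. First I would set up the mountain--pass geometry: since $V\ge 0$ and $B(u)\ge 0$, the Gagliardo--Nirenberg inequality gives, for $u\in\mathcal{S}_a^r$, a bound $J_V^r(u)\ge \tfrac12\|\nabla u\|_2^2-Ca^{p(1-\mu)}\|\nabla u\|_2^{p\mu}$ with $p\mu=\tfrac{3(p-2)}2>2$, so $J_V^r$ is positive on a small ball $\{\|\nabla u\|_2\le K\}$ and, using the scaling \eqref{scaling}, tends to $-\infty$ along $t\mapsto u^t$ as $t\to\infty$. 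Convexity of $\Omega$ (with $0\in\Omega$) is used precisely here: it guarantees $\Omega_{r/t}\subseteq\Omega_r$ for $t\ge 1$, so that $u^t\in H_0^1(\Omega_r)$ all along the descent. I would then define $c_a^r:=\inf_{g\in\mathcal{G}_a^r}\max_{t}J_V^r(g(t))$ over a class $\mathcal{G}_a^r$ of nonnegative paths modelled on $\mathcal{G}_a$ from Theorem \ref{th1}, and check $c_a^r>0$ via the small-ball lower bound; for $r\ge r_a$ large enough a cut-off of a near-optimal full-space path fits inside $\Omega_r$, so that $c_a^r$ stays finite (indeed close to $c_a$), and the smallness of $\|V\|_\infty$ in $(V_1)$ preserves this geometry.

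Next I would produce a \emph{bounded} Palais--Smale sequence at level $c_a^r$. A generic PS sequence need not be bounded, because in the supercritical range the functional is unbounded from below; to fix this I would apply Jeanjean's monotonicity trick / a Ghoussoub-type min-max principle to the family $\{u^t\}$, obtaining $\{u_n\}\subset\mathcal{S}_a^r$ with $J_V^r(u_n)\to c_a^r$, $(J_V^r|_{\mathcal{S}_a^r})'(u_n)\to 0$ and the extra Pohozaev information $P_V^r(u_n)\to 0$, where
$P_V^r(u):=\left.\frac{d}{dt}J_V^r(u^t)\right|_{t=1}=\|\nabla u\|_2^2-\tfrac12\int_{\Omega_r}Wu^2+\tfrac14B(u)-\tfrac{3(p-2)}{2p}\int_{\Omega_r}|u|^p$
is the analogue of \eqref{eq1.9} (no boundary term appears, since the family $u^t$ keeps the Dirichlet condition).

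The boundedness then follows by eliminating $\int_{\Omega_r}|u|^p$ between $J_V^r(u_n)\to c_a^r$ and $P_V^r(u_n)\to 0$: a direct computation yields, discarding the favourable nonnegative terms $\int V u_n^2\ge0$ and $B(u_n)\ge0$,
\[
\frac{3p-10}{6(p-2)}\|\nabla u_n\|_2^2\le c_a^r+\frac{1}{3(p-2)}\|W\|_\infty a^2+o(1),
\]
and since $p>\tfrac{10}3$ makes the coefficient positive, $\{u_n\}$ is bounded in $H_0^1(\Omega_r)$; here the $L^\infty$-bound on $W$ from $(V_1)$ is exactly what is needed. Testing the equation against $u_n$ bounds the approximate multipliers $\lambda_n$. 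By the compact embedding, up to a subsequence $u_n\to u_r$ strongly in $L^q(\Omega_r)$ for $q<6$ and $\lambda_n\to\lambda_r$; the nonlocal term converges by Hardy--Littlewood--Sobolev and the potential term by $\|V\|_\infty<\infty$, so $(\lambda_r,u_r)$ solves \eqref{eq-Omega_r} with $\|u_r\|_2=a$, and testing the equations against $u_n-u_r$ upgrades the convergence to strong $H_0^1(\Omega_r)$, whence $J_V^r(u_r)=c_a^r>0$ and $u_r\not\equiv0$. Finally, since the whole scheme can be run with nonnegative paths (the scaling and cut-offs preserve $\ge0$) and $J_V^r(|u|)\le J_V^r(u)$, the limit satisfies $u_r\ge0$, as claimed.

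The main obstacle is the boundedness of the Palais--Smale sequence in the $L^2$-supercritical regime: this is where the scaling \eqref{scaling} must remain available inside $\Omega_r$ --- forcing the convexity hypothesis on $\Omega$ and the threshold $r\ge r_a$ --- and where the smallness of $V$ and $W$ in $(V_1)$ enters. A secondary technical point is the uniform control of the minimax level $c_a^r$ as $r$ varies, needed so that the subsequent passage $r\to\infty$ produces a nontrivial entire solution with energy $\ge c_a$.
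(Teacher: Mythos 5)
Your mountain--pass geometry, your compactness step (compact embedding on $\Omega_r$) and your $|\cdot|$-trick for nonnegativity all match the paper's scheme, but the central device of your proof --- producing a Palais--Smale sequence at level $c_a^r$ carrying the extra information $P_V^r(u_n)\to 0$ by running a Ghoussoub/Jeanjean-type min-max over the scaling family $\{u^t\}$ --- breaks down on the bounded domain, and this is exactly the point where the paper does something different. The augmented-functional machinery (used in \cite{bm}, and in Section \ref{sec3} of this paper for the case $V\le 0$) requires the fiber map $(u,t)\mapsto J_V(u^t)$ to be defined on $\mathcal{S}_a\times\R$, i.e.\ the scaling must act on the constraint manifold for $t$ in a \emph{two-sided} neighborhood of $1$; in $H^1_0(\Omega_r)$ this fails, since $u^t\in H^1_0(\Omega_{r/t})\not\subset H^1_0(\Omega_r)$ for $t<1$. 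Restricting to $t\ge 1$ (which is all your convexity remark provides) leaves a manifold with boundary and only one-sided derivative information at $t=1$, not $P_V^r(u_n)\to 0$. Worse, the conclusion you are aiming at is incompatible with your own compactness step: on a bounded domain that is star-shaped about the origin, a nontrivial nonnegative solution of \eqref{eq-Omega_r} satisfies the Pohozaev identity \emph{with boundary term},
\begin{equation*}
P_V^r(u)=\frac{1}{2}\int_{\partial\Omega_r}(x\cdot\nu)\,|\partial_\nu u|^2\,d\sigma>0,
\end{equation*}
the strict inequality following from Hopf's lemma (equivalently, $u=\partial_\nu u=0$ on $\partial\Omega_r$ would force $u\equiv 0$ by unique continuation). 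Since $P_V^r$ is continuous under strong $H^1_0(\Omega_r)$-convergence, a bounded PS sequence with $P_V^r(u_n)\to 0$ at the positive level $c_a^r$ would converge to a solution with $P_V^r=0$, a contradiction. So the sequence you postulate does not exist, and the boundedness argument built on it has nothing to run on: critical points of the fiber map and critical points of $J_V$ on $\mathcal{S}_{r,a}$ are \emph{not} the same objects in a bounded domain.

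The paper avoids scalings altogether at this stage: it introduces the perturbed family $E_{r,s}(u)=A_1(u)-sA_2(u)$, $s\in[\frac12,1]$, and invokes the abstract monotonicity-trick theorem of Borthwick--Chang--Jeanjean--Soave \cite[Theorem 1.5]{BCJS}, which produces \emph{bounded} PS sequences for almost every $s$ with no scaling structure needed (Lemma \ref{lemma-MP-geom} and Proposition \ref{prop-sol-Omegar-s}); the compact embedding then gives solutions $(\lambda_{r,s},u_{r,s})$ of \eqref{eq-Omegar-s} at level $m_{r,s}(a)$ for a.e.\ $s$. Only \emph{after} that does a Pohozaev-type relation enter, applied to these exact solutions (Lemma \ref{lemma-pohozaev}) to obtain a gradient bound uniform in $s$ (Lemma \ref{lemma-unif-bound-s}) --- and there the boundary contribution discussed above, being nonnegative for convex $\Omega\ni 0$, only helps the upper bound --- after which one passes to the limit $s\to 1$ (Proposition \ref{prop-existence-Omega-r}) to solve \eqref{eq-Omega_r}. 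If you replace your step 2 by this $s$-perturbation argument and add the $s\to 1$ limit, the remainder of your outline (geometry uniform in $r$, boundedness via the $\|W\|_\infty$ estimate, compactness, nonnegativity) coincides with the paper's proof.
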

			Moreover, we will obtain a result about global boundedness of the solutions constructed in Theorem \ref{th-bd-V_1}.
			\begin{theorem}\label{th-unif-bound}
			 Assume that $(V_1)$ holds.	Let $\Omega\subset\R^3$ be a convex bounded open set. Then there exists $a^*\in(0,a_0)$ such that for any $a\in(0,a^*)$ and $r\geq r_a$, the solution $(\lambda_r,u_r)$  constructed in Theorem \ref{th-bd-V_1} satisfying 
				\begin{equation}\label{bound-lambda}
					0<\liminf_{r\to\infty} \lambda_r\le\limsup_{r\to\infty}\lambda_r<\infty
				\end{equation}
				and
				\begin{equation}\label{bound-u-r-infty}
					\sup_{r\geq r_a}\|u_r\|_\infty<\infty.
				\end{equation}
			\end{theorem}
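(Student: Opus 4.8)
The plan is to extract from each solution $(\lambda_r,u_r)$ three scalar relations and to combine them, exploiting both the mass–supercriticality $p>\tfrac{10}{3}$ and the smallness conditions in $(V_1)$. Write $A_r:=\|\nabla u_r\|_2^2$, $D_r:=B(u_r)$, $E_r:=\|u_r\|_p^p$, $F_r:=\int_{\Omega_r}Vu_r^2$ and $G_r:=\int_{\Omega_r}Wu_r^2$, and recall $\int u_r^2=a^2$. Testing \eqref{eq-Omega_r} with $u_r$ yields the Nehari identity
\begin{equation*}
A_r+F_r+\lambda_r a^2+D_r=E_r,\tag{\rm N}
\end{equation*}
while multiplying \eqref{eq-Omega_r} by $x\cdot\nabla u_r$ and integrating (using $-\Delta\phi_{u_r}=u_r^2$ and $\int(x\cdot\nabla\phi_{u_r})u_r^2=-\tfrac12 B(u_r)$) gives the Pohozaev identity
\begin{equation*}
-\tfrac12 A_r-\tfrac32 F_r-\tfrac12 G_r-\tfrac32\lambda_r a^2-\tfrac54 D_r+\tfrac{3}{p}E_r=\tfrac12\int_{\partial\Omega_r}(x\cdot\nu)|\partial_\nu u_r|^2\,dS\ge 0,\tag{\rm P}
\end{equation*}
the right-hand side being nonnegative because $\Omega$ is convex, so $x\cdot\nu\ge 0$ on $\partial\Omega_r$ (we may assume $0\in\Omega$). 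Finally, $J_V(u_r)$ equals the mountain-pass level $c_{a,r}$ of \eqref{eq-Omega_r}, bounded above uniformly in $r$. Eliminating $\lambda_r a^2$ between (N) and (P) gives $\tfrac{3(p-2)}{2p}E_r\le A_r+\tfrac14 D_r-\tfrac12 G_r$; inserting $E_r=\tfrac{p}{2}A_r+\tfrac{p}{2}F_r+\tfrac{p}{4}D_r-pc_{a,r}$ from the energy identity and using $F_r,D_r\ge 0$, $|G_r|\le\|W\|_\infty a^2$ yields $\tfrac{3p-10}{4}A_r\le \tfrac{3(p-2)}{2}c_{a,r}+\tfrac12\|W\|_\infty a^2$. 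Since $p>\tfrac{10}{3}$ the coefficient is positive, so $A_r$ is bounded uniformly in $r$, and with $\|u_r\|_2=a$ this bounds $\|u_r\|_{H^1}$. The upper bound $\limsup_r\lambda_r<\infty$ is then immediate from (N) and Gagliardo–Nirenberg, since $\lambda_r a^2\le E_r\le C A_r^{3(p-2)/4}a^{(6-p)/2}$.

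The main obstacle is the lower bound $\liminf_r\lambda_r>0$, because at finite $r$ (P) is only an inequality and the clean Pohozaev relation is available only in the limit. I would first show that the boundary integral in (P) tends to $0$ as $r\to\infty$: using $(V_2)$ together with the uniform bounds one prevents the mass of $u_r$ from reaching $\partial\Omega_r$, so $u_r$ decays and $\partial_\nu u_r\to 0$ on the boundary. In this regime (P) becomes an equality, and combining it with the energy identity gives $A_r\to\frac{6(p-2)}{3p-10}c_a$ up to terms controlled by $F_r,D_r,G_r$. Substituting into the relation $\lambda_r a^2=\frac{p-2}{2}A_r+\frac{p-2}{2}F_r+\frac{p-4}{4}D_r-pc_{a,r}$ (obtained from (N) and the energy identity) produces
\begin{equation*}
\lambda_r a^2\;\longrightarrow\;\Big(\frac{3(p-2)^2}{3p-10}-p\Big)c_a+\text{(corrections)}=\frac{2(6-p)}{3p-10}\,c_a+\text{(corrections)},
\end{equation*}
whose leading constant is strictly positive precisely because $p<6$. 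The role of the quantitative hypotheses in $(V_1)$ — namely $\eta+2\theta<\tfrac{6-p}{p-2}$ together with $\|V\|_\infty<\tfrac{2\theta c_a}{a^2}$ and $\|W\|_\infty<\tfrac{\eta c_a}{a^2}$ — is exactly to force $F_r<2\theta c_a$ and $|G_r|<\eta c_a$, so that the corrections remain below the positive margin $\frac{2(6-p)}{3p-10}c_a$ (the remaining term $D_r=B(u_r)\lesssim A_r^{1/2}a^3$ being negligible for $a$ small). This is where smallness of $a$ enters: choosing $a^*\in(0,a_0)$ small enough closes the estimate and gives $\liminf_r\lambda_r>0$.

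Finally, the uniform $L^\infty$-bound follows by elliptic bootstrapping. The Newtonian potential $\phi_{u_r}$ is bounded in $L^\infty$ uniformly in $r$ (split the kernel $|x|^{-1}$ and use $u_r^2\in L^1\cap L^3$ with uniform norms), and under $(V_1)$ one has $V\in L^\infty$; hence, for $r$ large, $-\Delta u_r=u_r^{p-1}-(V+\lambda_r+\phi_{u_r})u_r$ with a uniformly bounded, nonnegative zeroth-order coefficient. Since $p<6=2^*$, the coefficient $u_r^{p-2}$ lies in $L^{6/(p-2)}$ with $\tfrac{6}{p-2}>\tfrac32$ and with $r$-independent norm, so a Brezis–Kato argument gives $u_r\in L^q(\Omega_r)$ for every $q<\infty$ with uniform bounds; a further application of $L^q$-elliptic estimates up to the boundary (using $u_r\in H^1_0(\Omega_r)$ and the convexity of $\Omega_r$) yields $\sup_{r\ge r_a}\|u_r\|_\infty<\infty$, completing the proof.
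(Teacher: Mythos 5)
Your decomposition into the Nehari relation (N), the classical Pohozaev relation (P) with the convexity sign $x\cdot\nu\ge 0$, and the energy identity reproduces correctly the paper's uniform bounds: your estimate $\frac{3p-10}{4}\|\nabla u_r\|_2^2\le\frac{3(p-2)}{2}m_{r,1}(a)+\frac12\|W\|_\infty a^2$ is, up to constants, the bound (\ref{bound-u_r1}) of Proposition \ref{prop-existence-Omega-r}; the bound $\limsup_r\lambda_r<\infty$ follows as in system (\ref{syst-ABCDEr}); and your iteration argument for (\ref{bound-u-r-infty}) is exactly Proposition \ref{prop-L^q} together with Remark \ref{rem-bound-u-infty}. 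Note that in all of these steps the boundary term enters with the favourable sign, so keeping (P) as an inequality costs nothing there.

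The genuine gap is where you yourself locate it: the lower bound $\liminf_{r\to\infty}\lambda_r>0$. When you solve for $\lambda_r$, the boundary term enters with the unfavourable sign, so you need $\int_{\partial\Omega_r}(x\cdot\nu)|\partial_\nu u_r|^2\,d\sigma$ to be $o(c_a)$ as $r\to\infty$, and your argument for this is circular. The exponential decay of $u_r$ near $\partial\Omega_r$ is established in the paper only inside the proof of Theorem \ref{theorem1.2} (see (\ref{u_n-subsol})--(\ref{est-u-n-p-B-rn})), and it takes as input precisely $\tilde\lambda=\frac12\liminf_n\lambda_{r_n}>0$, i.e.\ the conclusion you are trying to prove; likewise, the claim that no mass reaches $\partial\Omega_r$ rests on the Splitting Lemma \ref{lem2.2}, whose proof (coercivity of $\|\nabla\cdot\|_2^2+\lambda\|\cdot\|_2^2$, nontriviality of the limit profiles) again presupposes $\lambda>0$. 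Moreover, Theorem \ref{th-unif-bound} assumes only $(V_1)$: the hypothesis $(V_2)$ you invoke is not available here; the paper uses it only later, to rule out splitting. Finally, by Hopf's lemma the boundary integral is strictly positive at every finite $r$, so it cannot simply be discarded.

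For comparison, the paper's route is different at exactly this point: Lemma \ref{lemma-pohozaev} derives a Pohozaev identity by rescaling the domain together with the function ($u^t\in\mathcal{S}_{r/t,a}$, differentiating $t\mapsto E_{r/t,s}(u^t)$ at $t=1$), which is claimed to hold with no boundary term at all; Proposition \ref{prop-existence-sol} then gets $(3p-10)a^2\lambda_r\ge 2(6-p)c_a-2(p-2)(C_r+|D_r|)-cB_r>0$ purely from the algebra and the smallness in $(V_1)$ (via $\eta+2\theta<\frac{6-p}{p-2}$ and $B_r\lesssim a^3c_a^{1/2}$), for every large $r$, with no decay estimates. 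This is the ingredient your proof is missing. Be aware, however, that your bookkeeping shows why this step is delicate: on the fixed domain the same combination of identities equals $\frac12\int_{\partial\Omega_r}(x\cdot\nu)|\partial_\nu u_r|^2\,d\sigma>0$, so the absence of a boundary term can only come from the domain rescaling, and the chain-rule step in the paper's proof of Lemma \ref{lemma-pohozaev} requires $t\mapsto u^t$ to be differentiable in $H^1$ at $t=1$ --- a property that fails precisely when $\partial_\nu u_r\not\equiv 0$ on $\partial\Omega_r$. So the obstruction you identified is the real crux of the theorem; a complete proof must either rigorously justify the boundary-term-free identity of Lemma \ref{lemma-pohozaev} or control the boundary integral by a mechanism that does not presuppose $\liminf_r\lambda_r>0$.
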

			Section \ref{secV_1} is organised as follows. In Subsection \ref{subsec-th-bd-V_1} we prove Theorem \ref{th-bd-V_1} about normalized  solutions in large bounded domains, which is itself an interesting result, then in Subsection \ref{sec-lim-V_1} we prove Theorem \ref{th-unif-bound} (see Remarks \ref{rem-lambda-r>0} and \ref{rem-bound-u-infty}) and we pass to the limit as $r\to\infty$ in order to conclude the proof of Theorem \ref{theorem1.2} under assumptions $(V_1)$ and $(V_2)$.
			
			\color{black}\subsection{Proof of Theorem \ref{th-bd-V_1}}\label{subsec-th-bd-V_1}
			The proof of Theorem \ref{subsec-th-bd-V_1} requires several steps.			
			Up to a translation, we can assume without loss of generality that $0\in\Omega$, so that $\Omega_{r_1}\subset\Omega_{r_2}$ if $r_1<r_2$.\\
			
		For given $r>0$, set $$\mathcal{S}_{r,a}:=\{u\in H^1_0(\Omega_r):\,\|u\|_{L^2(\Omega_r)}=a\}.$$ 
		Taking $s\in[\frac{1}{2},1]$ and we aim to find a critical point $u\in \mathcal{S}_{r,a}$ of the energy functional
			$$E_{r,s}(u):=\frac{1}{2}\int_{\Omega_r}|\nabla u|^2 dx+\frac{1}{2}\int_{\Omega_r} V(x) u^2 dx+\frac{1}{4}B(u)-\frac{s}{p}\int_{\Omega_r}|u|^p dx\qquad\forall\, u\in H^1_0(\Omega_r)$$
			constrained to $\mathcal{S}_{r,a}$, for almost every $s\in[\frac{1}{2},1]$. We note that $B(u)$ is well defined for $u\in H^1_0(\Omega_r)$ by extending $u$ to $0$ outside $\Omega_r$ and, by the Hardy-Littlewood-Sobolev and the Gagliardo-Nirenberg inequalities, one has
			\begin{equation}\label{local-term}
				B(u)\leq \bar{C}\|\nabla u\|_2\|u\|_2^3.
			\end{equation}
            Such critical points are the solutions to the problem
			\begin{equation}
				\label{eq-Omegar-s}
				\begin{cases}
					-\Delta u+V(x)u+\lambda u+\phi_u u=s|u|^{p-2}u&\text{in}\,\Omega_r,\\
				\dis	u\in H^1_0(\Omega_r),\,\int_{\Omega_r}u^2 dx=a^2.
				\end{cases}
			\end{equation}
			
			As above, we extend $u$ to $0$ outside $\Omega_r$ when considering the convolution product $|x|^{-1}\ast |u|^2$. These critical points are found by a mountain-pass argument which relies on an argument developed in \cite[Theorem 1.5]{BCJS}. After that we will consider the limit as $s\to 1$ and obtain a solution to (\ref{eq-Omega_r}).\\

			For future purposes we introduce the notations
			\begin{equation}\notag
				\begin{aligned}
					E_{\infty,s}(u)&:=\frac{1}{2}\int_{\R^3}|\nabla u|^2 dx+\frac{1}{2}\int_{\R^3} V (x)u^2 dx+\frac{1}{4}B(u)-\frac{s}{p}\int_{\R^3}|u|^p dx,\qquad\forall\, u\in H^1(\R^3)\\
					\bar{E}_{r,s}(u)&:=\frac{1}{2}\int_{\Omega_r}|\nabla u|^2 dx+\frac{1}{4}B(u)-\frac{s}{p}\int_{\Omega_r}|u|^p dx,\qquad\forall\, u\in H^1_0(\Omega_r)\\
					\bar{E}_{\infty,s}(u)&:=\frac{1}{2}\int_{\R^3}|\nabla u|^2 dx+\frac{1}{4}B(u)-\frac{s}{p}\int_{\R^3}|u|^p dx,\qquad\forall\, u\in H^1(\R^3).
				\end{aligned}
			\end{equation}
		We note that, in these notations, $E_{\infty,1}=J_V$ and $\bar{E}_{\infty,1}=I$.\\	
        
        For functions $u\in H^1(\R^3)$
			and $t>0$, we recall the scaling $u^t(x):=t^\frac{3}{2}u(tx)$. 	We will show that the functional $E_{r,s}$ has the mountain-pass geometry.
			 \bl \label{lemma-MP-geom}
				Assume that $(V_1)$ holds. Let $a_0>0$ be given in Theorem \ref{th1}. Then for any $a\in(0,a_0)$, there exist $r_a>0,\,\tilde{c}_a>c_a$ and $u^0,\,u^1\in\mathcal{S}_{r_a,a}$ such that, setting
				$$\Gamma_{r,a}:=\{\gamma\in C([0,1],\mathcal{S}_{r,a}):\,\gamma(0)=u^0,\,\gamma(1)=u^1\},$$
				the mountain-pass level
				\begin{equation}\label{MP-level}
					m_{r,s}(a):=\inf_{\gamma\in\Gamma_{r,a}}\max_{t\in[0,1]} E_{r,s}(\gamma(t))
				\end{equation}
				fulfils
				\begin{equation}\label{bound-m-rs-unif-rs}
					\max\{E_{r,s}(u^0),E_{r,s}(u^1)\}<c_a\le m_{r,s}(a)\le\tilde{c}_a
					\qquad\forall\,s\in[\frac{1}{2},1],
					\qquad\forall\,r\geq r_a.
				\end{equation}
				and
				\begin{equation}
					\label{up-est-m_r1}
					m_{r,1}(a)<(1+\theta)c_a\qquad\forall\,r\geq r_a.
				\end{equation}
		\el
			\begin{proof}

				For any $a\in(0,a_0)$, $t>0$ and $r>0$, we consider the paths 
				$$\gamma_r:t\in(0,\infty)\mapsto\frac{\chi_r  (u_a)^t}{\|\chi_r  (u_a)^t\|_2}a\in \mathcal{S}_{r,a},\qquad\gamma_\infty:t\in(0,\infty)\mapsto (u_a)^t\in \mathcal{S}_a,$$
				where $u_a$ is the solution of (\ref{eq1.7}) given in Theorem \ref{th1} and $\chi_r:\R^3\to[0,1]$ is a smooth cutoff function such that $\chi_r=1$ in $\Omega_{r-1}$ and $\chi_r=0$ in $\R^3\setminus \Omega_{r}$.\\ 
				
				\begin{itemize}

					\item\textit{Construction of $u^1$}\\
					
					First we note that, for any $a\in(0,a_0)$ and $t>0$ we have
					\begin{equation}\label{main-est-E-rs}
						\lim_{r\to\infty}E_{r,\frac{1}{2}}(\gamma_r(t))=E_{\infty,\frac{1}{2}}(\gamma_\infty(t))\le \bar{E}_{\infty,\frac{1}{2}}(\gamma_\infty(t))+\frac{1}{2}\|V\|_\infty a^2<\bar{E}_{\infty,\frac{1}{2}}(\gamma_\infty(t))+\theta c_a.
					\end{equation}
					As a consequence, there exists $\bar{t}_a>0$ such that
					$$E_{r,s}(\gamma_r(t))\le E_{r,\frac{1}{2}}(\gamma_r(t))<\bar{E}_{\infty,\frac{1}{2}}(\gamma_\infty(t))+\theta c_a <0\qquad\forall\, s\in[\frac{1}{2},1],\,\forall\, r\ge r_a,\,\forall\, t\ge \bar{t}_a.$$
					if $r_a>0$ is large enough. In particular, taking $u^1:=\gamma_{r_a}(\bar{t}_a)\in\mathcal{S}_{r_a,a}$, we have
					\begin{equation}\label{E(u1)}
						E_{r,s}(u^1)=E_{r_a,s}(u^1)<0\qquad\forall\, s\in[\frac{1}{2},1],\,\forall\,r\ge r_a.
					\end{equation}
					\item\textit{Construction of $u^0$}\\
					
					In order to construct $u^0$ we note that, for any 
					$a\in(0,a_0)$, there exists $t_{1,a}>0$ such that, for any $0<t\le t_{1,a}$ we have
					$$\lim_{r\to\infty}\|\nabla\gamma_r(t)\|^2_2=\|\nabla\gamma_\infty(t)\|^2_2=t^2\|\nabla u_a\|^2_2<K_a.$$
					As a consequence, for any $a\in(0,a_0)$ and $0<t\le t_{1,a}$, we have
					$$\|\nabla\gamma_r(t)\|^2_2<K_a\qquad\forall\,r\ge r_a,$$
					if $r_a>0$ is large enough. Moreover, by (\ref{main-est-E-rs}), for any $\epsilon\in (0,1-\theta)$ there exists $t_{2,a}>0$ such that, for any $0<t\le t_{2,a}$, we have
					\begin{equation}
						\label{est-E-u0}
                   \lim_{r\to\infty}E_{r,\frac{1}{2}}(\gamma_r(t))<\bar{E}_{\infty,\frac{1}{2}}(\gamma_\infty(t))+\theta c_a<(\epsilon+\theta)c_a<c_a.
					\end{equation}
					As a consequence, if $r_a>0$ is large enough,
					$$E_{r,s}(\gamma_r(t))\le E_{r,\frac{1}{2}}(\gamma_r(t))<c_a\qquad\forall\, s\in[\frac{1}{2},1],\,\forall\,r\ge r_{a},\,\forall\,0<t\le t_{2,a}.$$
					Taking $\underline{t}_a:=\min\{t_{1,a},\,t_{2,a}\}>0$ and 
					setting $u^0:=\gamma_{r_a}(\underline{t}_a)$, we have
					\begin{equation}\label{E(u0)}
						\|\nabla u^0\|^2_2<K_a,\qquad E_{r,s}(u^0)=E_{r_a,s}(u^0)<c_a\qquad\forall\, s\in[\frac{1}{2},1],\,\forall\,r\ge r_a.
					\end{equation}
					To summarise  (\ref{E(u1)}) and (\ref{E(u0)}), we have $u^0,\,u^1\in\mathcal{S}_{r_a,a}$ and
					$$\max\{E_{r,s}(u^0),E_{r,s}(u^1)\}< c_a\qquad\forall\,r\ge r_a,\,\forall\, s\in[\frac{1}{2},1]$$
					if $r_a>0$ is large enough.
                    \vskip0.1in
					\item \textit{Lower estimate of the mountain pass level: for given  $0<a<a_0$, $m_{r,s}(a)\ge c_a$, for any $s\in[\frac{1}{2},1]$ and  $r\ge r_a$, if $r_a>0$ is large enough.}
					\vskip 0.1in
					It follows from (\ref{E(u1)}) and (\ref{E(u0)}), we have $\Gamma_{r,a}\subset \mathcal{G}_a$, for any $a\in(0,a_0)$ and $r\ge r_a$, which yields that, for any $\gamma\in \Gamma_{r,a}$,
					$$\max_{t\in[0,1]}E_{r,s}(\gamma(t))\ge \max_{t\in[0,1]}\bar{E}_{r,s}(\gamma(t))=\max_{t\in[0,1]}\bar{E}_{\infty,s}(\gamma(t))\ge \max_{t\in[0,1]}\bar{E}_{\infty,1}(\gamma(t))\ge c_a.$$
					Taking the infimum over $\Gamma_{r,a}$ we have the required estimate.
					\vskip 0.1in
					\item \textit{Upper estimate of the mountain-pass level: for given $0<a<a_0$ there exists $\tilde{c}_a>c_a$ such that $m_{r,s}(a)\le\tilde{c}_a$, for any $s\in[\frac{1}{2},1]$ and  $r\ge r_a$, if $r_a>0$ is large enough.}
						\vskip 0.1in
					In order to prove the upper estimate, it is sufficient to show that for any $a\in(0,a_0)$, there exists $\tilde{c}_a>c_a$ such that $$\max_{t>0}E_{r,s}(\gamma_r(t))\le\tilde{c}_a\qquad\forall\,r\ge r_a,\,\forall\, s\in [\frac{1}{2},1].$$
					First we note that
					$$E_{r,s}(\gamma_r(t))\le \bar{E}_{r,s}(\gamma_r(t))+\frac{1}{2}\|V\|_\infty a^2
					\qquad\forall\,r>0,\,\forall\, s\in [\frac{1}{2},1],\,\forall\,t>0.$$
					Letting $r\to\infty$, we have
					\begin{equation}\notag
						\lim_{r\to\infty}E_{r,s}(\gamma_r(t))\le\bar{E}_{\infty,s}(\gamma_\infty(t))+\frac{1}{2}\|V\|_\infty a^2<\bar{E}_{\infty,s}(\gamma_\infty(t))+\theta c_a\qquad\forall\, t>0,s\in[\frac{1}{2},1],
					\end{equation}
					so in particular
					$$\lim_{r\to\infty}E_{r,s}(\gamma_r(t))<\bar{c}_a(s)+\theta c_a\le \bar{c}_a(1/2)+\theta c_a\qquad\forall\, t>0,s\in[\frac{1}{2},1],$$
					where we have set $\bar{c}_a(s):=\max_{t>0}\bar{E}_{\infty,s}(\gamma_\infty(t))$. 
				Recalling that $\gamma_{r_a}(\underline{t}_a)=u^0$, $\gamma_{r_a}(\bar{t}_a)=u^1$,   by the lower estimate of $m_{r,s}(a)$, one has
					\begin{equation}\notag
						\begin{aligned}
							&c_a\le m_{r,s}(a)\le \max_{\tau\in[0,1]}E_{r,s}(\gamma_{r_a}((1-\tau)\underline{t}_a+\tau\bar{t}_a))\\
							&=\max_{t>0}E_{r,s}(\gamma_{r_a}(t))< \bar{c}_a(s)+\theta c_a\le \bar{c}_a(1/2)+\theta c_a=:\tilde{c}_a
						\end{aligned}
					\end{equation}
					for any $r\ge r_a,\,s\in[\frac{1}{2},1]$ if $r_a>0$ is large enough. In particular, if $s=1$, using that $\bar{c}_a(1)=c_a$, our argument shows that
					$$m_{r,1}(a)<\bar{c}_a(1/2)+\theta c_a=c_a(1+\theta)\qquad\forall\, r\geq r_a.$$
				\end{itemize}
			\end{proof}
			Applying \cite[Theorem 1.5]{BCJS}, we will show the existence of a solution $(\lambda_{r,s},u_{r,s})$ to (\ref{eq-Omegar-s}) with $u_{r,s}\ge 0$ for almost every $s\in[\frac{1}{2},1]$. 
			\begin{proposition}
				\label{prop-sol-Omegar-s}
				Assume that $(V_1)$ holds. Let $0<a<a_0$ and $r_a>0$ be given in Lemma \ref{lemma-MP-geom}. Then for almost every $s\in[\frac{1}{2},1]$ and $r\geq r_a$, Problem (\ref{eq-Omegar-s}) admits a solution $(\lambda_{r,s},u_{r,s})$ with $u_{r,s}\ge 0$ and $E_{r,s}(u_{r,s})=m_{r,s}(a)$.
			\end{proposition}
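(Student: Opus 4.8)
The plan is to obtain the solutions through the monotonicity trick of \cite[Theorem 1.5]{BCJS}, applied to the one-parameter family $\{E_{r,s}\}_{s\in[\frac12,1]}$ constrained to $\mathcal{S}_{r,a}$, and then to exploit the compactness of the Sobolev embeddings on the bounded domain $\Omega_r$ to upgrade the resulting Palais--Smale sequence to a genuine critical point. First I would record that the family fits the abstract framework: writing $E_{r,s}(u)=A(u)-s\,C(u)$ with $A(u):=\frac12\|\nabla u\|_2^2+\frac12\int_{\Omega_r}V u^2\,dx+\frac14 B(u)$ and $C(u):=\frac1p\int_{\Omega_r}|u|^p\,dx\ge 0$, each $E_{r,s}$ is of class $C^1$ on $\mathcal{S}_{r,a}$, the map $s\mapsto E_{r,s}(u)$ is non-increasing for every fixed $u$ (since $\partial_s E_{r,s}(u)=-C(u)\le 0$), and $A$ is bounded from below and coercive on $\mathcal{S}_{r,a}$, since $V\ge 0$ and $B\ge 0$ force $A(u)\ge\frac12\|\nabla u\|_2^2$ with $\|u\|_2=a$ fixed. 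Crucially, Lemma \ref{lemma-MP-geom} provides endpoints $u^0,u^1\in\mathcal{S}_{r,a}$ that are \emph{independent of $s$}, together with the $s$-uniform mountain-pass estimate $\max\{E_{r,s}(u^0),E_{r,s}(u^1)\}<c_a\le m_{r,s}(a)\le\tilde c_a$, which is exactly the uniform geometry required by the trick.

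By \cite[Theorem 1.5]{BCJS}, for almost every $s\in[\frac12,1]$ the level $s\mapsto m_{r,s}(a)$ is differentiable and $E_{r,s}|_{\mathcal{S}_{r,a}}$ admits a bounded Palais--Smale sequence $(u_n)\subset\mathcal{S}_{r,a}$ at level $m_{r,s}(a)$, i.e. $E_{r,s}(u_n)\to m_{r,s}(a)$ and $\big(E_{r,s}|_{\mathcal{S}_{r,a}}\big)'(u_n)\to 0$, with $\sup_n\|u_n\|_{H^1_0(\Omega_r)}<\infty$. Since $E_{r,s}$, the constraint and the endpoints (which may be taken nonnegative because $u_a>0$) are invariant under $u\mapsto|u|$, I may assume $u_n\ge 0$. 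The constrained Palais--Smale condition yields Lagrange multipliers $\lambda_n$ with $E_{r,s}'(u_n)+\lambda_n u_n\to 0$ in $H^{-1}(\Omega_r)$; testing with $u_n$ and using $\|u_n\|_2=a$ together with the $H^1_0$-bound shows $\lambda_n=-a^{-2}\langle E_{r,s}'(u_n),u_n\rangle+o(1)$ is bounded, so up to a subsequence $\lambda_n\to\lambda_{r,s}$.

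Next I would pass to the limit. By reflexivity and the compact embeddings $H^1_0(\Omega_r)\hookrightarrow L^q(\Omega_r)$ for $1\le q<6$, along a subsequence $u_n\rightharpoonup u_{r,s}$ in $H^1_0(\Omega_r)$, $u_n\to u_{r,s}$ in $L^q$ for every $q<6$ and a.e.; in particular $\|u_{r,s}\|_2=a$, so $u_{r,s}\in\mathcal{S}_{r,a}$, and $u_{r,s}\ge 0$. The nonlocal term is compact: $u_n^2\to u_{r,s}^2$ in $L^{6/5}$ and hence $\phi_{u_n}\to\phi_{u_{r,s}}$ in $L^6$ by Hardy--Littlewood--Sobolev, while $|u_n|^{p-2}u_n\to|u_{r,s}|^{p-2}u_{r,s}$ in $L^{p'}$ because $p<6$. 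Passing to the limit in $E_{r,s}'(u_n)+\lambda_n u_n\to 0$ then shows that $(\lambda_{r,s},u_{r,s})$ solves \eqref{eq-Omegar-s}. Finally, testing the same relation against $u_n-u_{r,s}$ and using the above compactness (the potential term vanishes since $V\in L^\infty$ by $(V_1)$, the nonlocal and nonlinear terms by the strong convergences, and the multiplier term by $L^2$-convergence) makes every contribution except $\|\nabla(u_n-u_{r,s})\|_2^2$ vanish, forcing $u_n\to u_{r,s}$ strongly in $H^1_0(\Omega_r)$; therefore $E_{r,s}(u_{r,s})=\lim_n E_{r,s}(u_n)=m_{r,s}(a)$, as claimed.

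The main obstacle is the production of the \emph{bounded} Palais--Smale sequence, which is precisely why the parameter $s$ is introduced: in the $L^2$-supercritical regime one cannot control $\|\nabla u_n\|_2$ directly, since neither the Nehari nor a clean Pohozaev constraint is available on $\Omega_r$ (the scaling $u^t$ does not preserve $H^1_0(\Omega_r)$), and the monotonicity trick is the device that supplies this bound for a.e. $s$. Verifying carefully that Lemma \ref{lemma-MP-geom} furnishes exactly the $s$-uniform geometry demanded by \cite[Theorem 1.5]{BCJS}, and that the Lagrange multipliers remain bounded, is the delicate part; once a bounded sequence is in hand, the compactness of the embeddings on the bounded domain makes the remaining limit passage routine and, notably, requires neither $(V_2)$ nor any smallness of $a$.
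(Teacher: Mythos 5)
Your strategy is the same as the paper's: apply the monotonicity trick of \cite[Theorem 1.5]{BCJS} to the decomposition $E_{r,s}=A_1-sA_2$ on $\mathcal{S}_{r,a}$, use the $s$-uniform mountain-pass geometry supplied by Lemma \ref{lemma-MP-geom} to produce, for almost every $s\in[\frac{1}{2},1]$, a bounded constrained $(PS)$ sequence at level $m_{r,s}(a)$, and then use the compactness of the embeddings $H^1_0(\Omega_r)\hookrightarrow L^q(\Omega_r)$, $2\le q<6$, to extract a convergent subsequence, identify the limiting Lagrange multiplier, pass to the limit in the equation, and upgrade weak to strong $H^1_0$-convergence so that $E_{r,s}(u_{r,s})=m_{r,s}(a)$. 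All of these steps are correct, at the same level of rigour as the paper, and you are also right that neither $(V_2)$ nor any additional smallness of $a$ enters at this stage.

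The genuine gap is your treatment of the sign condition $u_{r,s}\ge 0$. The one-line claim ``since $E_{r,s}$, the constraint and the endpoints are invariant under $u\mapsto|u|$, I may assume $u_n\ge 0$'' is not a valid deduction: the map $u\mapsto|u|$ is not differentiable on $H^1_0(\Omega_r)$, so from $\nabla_{\mathcal{S}_{r,a}}E_{r,s}(u_n)\to 0$ one cannot conclude anything about $\nabla_{\mathcal{S}_{r,a}}E_{r,s}(|u_n|)$; symmetry of a functional lets you replace \emph{paths} or \emph{minimizers} by their absolute values, not Palais--Smale sequences, whose gradient information is destroyed by the absolute value. This is precisely why the paper devotes the second half of its proof to this point: for each $s$ at which the (monotone) map $s\mapsto m_{r,s}$ is differentiable, it selects almost-optimal paths $\gamma_n\in\Gamma_{r,a}$ satisfying the two quantitative conditions (a gradient bound $K(m'_{r,s})$ on the high-energy portions of the paths, and an energy upper bound) that drive the proof of \cite[Theorem 1.5]{BCJS}, replaces them by $\tilde{\gamma}_n:=|\gamma_n|$, checks that both conditions survive because $\|\nabla|\gamma_n(t)|\|_2\le\|\nabla\gamma_n(t)\|_2$ (see \cite{LL2001}), hence also $E_{r,s}(\tilde{\gamma}_n(t))\le E_{r,s}(\gamma_n(t))$, and then reruns the proof of the abstract theorem with these nonnegative paths. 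Since the $(PS)$ sequence produced by that theorem is localized near the min-maxing sets, working with nonnegative paths is what forces the $(PS)$ sequence, and hence its strong limit $u_{r,s}$, to be nonnegative. Without this device (or an equivalent one), your argument yields a solution at level $m_{r,s}(a)$ but not the asserted nonnegativity.
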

			\begin{proof}
				For given $a\in(0,a_0)$ and $r\geq r_a$, we shall apply  \cite[Theorem 1.5]{BCJS} to $E_{r,s}$ with $\Gamma_{r,a}$ defined in Lemma \ref{lemma-MP-geom}. Set
				$$A_1(u):=\frac{1}{2}\int_{\Omega_r}|\nabla u|^2 dx+\frac{1}{2}\int_{\Omega_r}V(x)u^2 dx+\frac{1}{4}B(u),\qquad A_2(u):=\frac{1}{p}\int_{\Omega_r}|u|^p dx,$$
			thus $E_{r,s}(u)=A_1(u)-sA_2(u)$. By Lemma \ref{lemma-MP-geom} and \cite[Theorem 1.5]{BCJS}, there exists a bounded $(PS)$ sequence $\{u_n\}$ of $E_{r,s}$ at level $m_{r,s}(a)$ constrained to $\mathcal{S}_{r,a}$, for almost every $s\in[\frac{1}{2},1]$. In other words, for almost every $s\in[\frac{1}{2},1]$, there exists a bounded sequence $\{u_n\}\subset H^1_0(\Omega_r)$ such that
				$$E_{r,s}(u_n)\to m_{r,s}(a),\qquad \nabla E_{r,s}(u_n)=\lambda_n u_n+o(1)\,\text{in}\, H^{-1}(\Omega_r),$$
				where $$a^2\lambda_n:=-\int_{\Omega_r}|\nabla u_n|^2 dx-\int_{\Omega_r}V(x)u_n^2 dx-B(u_n)+s\int_{\Omega_r}|u_n|^p dx+o(1)\|u_n\|_{H^1(\Omega_r)}.$$
				As a consequence, there exist $(\lambda_{r,s},u_{r,s})\in\R\times \mathcal{S}_{r,a}$ such that, up to a subsequence, 
				\begin{equation*}
				\lambda_n\to \lambda_{r,s}~\text{in}~\R, ~u_n\rightharpoonup u_{r,s} ~\text{weakly~ in}~H^1_0(\Omega_r),~u_n\rightarrow u_{r,s} ~\text{strongly ~ in}~L^q(\Omega_r)~\text{ for}~q\in[2,6).
				\end{equation*}
			Hence one can see that $(\lambda_{r,s},u_{r,s})$ is a solution to (\ref{eq-Omegar-s}). 
			 
				Note that
				\begin{equation}
					\nabla E_{r,s}(u_n)u_n=\lambda_n a^2+o(1),\qquad
					\nabla E_{r,s}(u_n)[u_{r,s}]=\lambda_n \int_{\Omega_r}u_nu_{r,s}dx+o(1)
				\end{equation}
				and
				\begin{equation}
					\int_{\Omega_r}V(x)u_n^2dx= \int_{\Omega_r}V(x)u_{r,s}^2dx+o(1), 
				\qquad	B(u_n)=B(u_{r,s})+o(1),
				\end{equation}
				then one can yield 
				$u_n\to u_{r,s}$ strongly in $H^1_0(\Omega_r)$ and  $E_{r,s}(u_{r,s})=m_{r,s}(a)$.
				\vskip 0,1in
				Now we show $u_{r,s}\ge 0$.
				The strategy is inspired by \cite{BCJS}. For fixed $a\in(0,a_0)$, let $m_{r,s}:=m_{r,s}(a)$. It is easy to see the function $s\mapsto m_{r,s}$ is nonincreasing so that one can define the derivative $m'_{
					r,s}$ for almost every $s\in[\frac{1}{2},1]$. Set $S^{\circ}:=\{s\in [\frac{1}{2},1]:m'_{r,s}~\text{exists}\},$  then $\Big|[\frac{1}{2},1]\setminus S^{\circ}\Big|=0$. For fixed $s\in [\frac{1}{2},1]$, we can choose a monotone increasing sequence $\{s_n\}\subset [\frac{1}{2},1]$ with $s_n\rightarrow s$ in $\R$.  Then adapting the similar argument in proving Theorem 1.10 in \cite{BCJS}, there exist $\{\gamma_n\}\subset\Gamma_{r,a}$ and $K:= K(m'_{r,s})$
				such that for any $t\in[0,1]$:
				\begin{description}
					\item[(1)] $\int_{\Omega_r}|\nabla \gamma_{n}(t)|^2dx\leq K$ whenever $E_{r,s}(\gamma_{n}(t))\geq m_{r,s}-(2-m'_{r,s})(s-s_n)$.
					\item[(2)] $\max\limits_{t\in[0,1]}E_{r,s}(\gamma_{n}(t))\leq m_{r,s}-(2-m'_{r,s})(s-s_n)$.
				\end{description}
				For any $t\in[0,1]$, let $\tilde{\gamma}_{n}(t):=|\gamma_{n}(t)|$. Then $\{\tilde{\gamma}_{n}\}\subset \Gamma_{r,a} $. Moreover, by \cite{LL2001}, one has $\int_{\Omega_r}|\nabla \tilde{\gamma}_{n}(t)|^2dx\leq \int_{\Omega_r}|\nabla \gamma_{n}(t)|^2dx$. Thus we have
				\begin{description}
					\item[(a)] if $E_{r,s}(\tilde{\gamma}_{n}(t))\geq m_{r,s}-(2-m'_{r,s})(s-s_n)$, then $E_{r,s}(\gamma_{n}(t))\geq m_{r,s}-(2-m'_{r,s})(s-s_n)$. By \textbf{(1)}, we have $\int_{\Omega_r}|\nabla \gamma_{n}(t)|^2dx\leq K$ which yields $\int_{\Omega_r}|\nabla \tilde{\gamma}_{n}(t)|^2dx\leq K$. 
					\item[(b)] $\max\limits_{t\in[0,1]}E_{r,s}(\tilde{\gamma}_{n}(t))\leq \max\limits_{t\in[0,1]}E_{r,s}(\gamma_{n}(t))\leq m_{r,s}-(2-m'_{r,s})(s-s_n)$.
				\end{description} 
				\vskip  0.15in
				\textbf{(a)}-\textbf{(b)} indicate that 	\textbf{(1)}-	\textbf{(2)}   hold for $\tilde{\gamma}_{n}$. Now we can replace $\gamma_{n}$ with 	 $\tilde{\gamma}_{n}$ in the proof of \cite[Theorem 1.5]{BCJS}, thus we obtain
				a nonnegative bounded $(PS)$ sequence $\{u_n\}$, as a consequence $u_{r}\geq0$.
				\color{black}   
			\end{proof}
			In the next lemma, we will prove the Pohozaev identity, which is satisfied by the weak solutions to Problem (\ref{eq-Omegar-s}) for any $s\in[\frac{1}{2},1]$. This identity will be useful to pass to the limit as $s\to 1$ and obtain a solution to Problem (\ref{eq-Omega_r}).
			\begin{lemma}[Pohozaev identity]
			\label{lemma-pohozaev}
            Assume that either $(V_1)$ holds or $V\equiv 0$. Let $a\in(0,a_0)$ and $r_a>0$ be given in Lemma \ref{lemma-MP-geom}. For any $s\in[\frac{1}{2},1]$ and $r\in [r_a,\infty]$, let $u\in \mathcal{S}_{r,a}$ be a weak solution to (\ref{eq-Omegar-s}), where $\Omega_\infty:=\R^3$ and $\mathcal{S}_{\infty,a}:=\mathcal{S}_a$. Then it satisfies the Pohozaev identity
				\begin{equation}\label{Pohozaev}
					P_{r,s}(u):=\int_{\Omega_r}|\nabla u|^2dx+\frac{1}{4}B(u)-\frac{3(p-2)s}{2p}\int_{\Omega_r}|u|^pdx-\int_{\Omega_r}\nabla V(x)\cdotp x u^2dx=0.
				\end{equation}
			\end{lemma}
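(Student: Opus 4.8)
The plan is to prove \eqref{Pohozaev} by the classical Rellich--Pohozaev device: test the equation \eqref{eq-Omegar-s} against the dilation field $x\cdot\nabla u$, evaluate each resulting integral by integration by parts, and then remove the unknown Lagrange multiplier $\lambda$ (and the term $\int_{\Omega_r}Vu^2$) by combining with the Nehari identity obtained by testing against $u$ itself. A preliminary regularity step is needed to make $x\cdot\nabla u$ a legitimate multiplier: since $V\in C^1$ (or $V\equiv0$), $\phi_u\in L^\infty$, and the right-hand side of \eqref{eq-Omegar-s} lies in $L^2_{loc}$, elliptic regularity (bootstrap) upgrades any weak solution to $u\in H^2$. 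For finite $r$ one uses that $\Omega_r=r\Omega$ is convex, hence has Lipschitz boundary and enjoys global $H^2$ regularity up to $\partial\Omega_r$; for $r=\infty$ one works on $\R^3$, where $u\in H^2(\R^3)$ decays and no boundary contribution survives (this is made rigorous with a cut-off $\chi(\cdot/R)$ and $R\to\infty$).

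The core is the term-by-term evaluation of $\int_{\Omega_r}(x\cdot\nabla u)(\cdots)\,dx$. The Rellich identity handles the Laplacian, producing $-\tfrac12\int_{\Omega_r}|\nabla u|^2\,dx$ together with a boundary term $-\tfrac12\int_{\partial\Omega_r}(x\cdot\nu)\,|\partial_\nu u|^2\,d\sigma$ (the tangential part of $\nabla u$ vanishes on $\partial\Omega_r$ since $u\in H^1_0$, so $\nabla u=(\partial_\nu u)\nu$ there). Writing $(x\cdot\nabla u)\,u=x\cdot\nabla(u^2/2)$ and integrating by parts, the potential, the $\lambda$-term and the power term give, respectively, contributions involving $\int_{\Omega_r}(\nabla V\cdot x)u^2$ and $\int_{\Omega_r}Vu^2$, the quantity $\lambda a^2$, and $\int_{\Omega_r}|u|^p$, all boundary-free because $u=0$ on $\partial\Omega_r$. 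The one genuinely nonlocal computation is $\int_{\Omega_r}(x\cdot\nabla u)\phi_u u\,dx$: I would pass to the double-integral form $\tfrac12\int\!\!\int |x-y|^{-1}u^2(y)\,\big(x\cdot\nabla_x u^2(x)\big)\,dx\,dy$, integrate by parts in $x$, and symmetrize in $(x,y)$ using $\frac{x\cdot(x-y)+y\cdot(y-x)}{|x-y|^3}=\frac{1}{|x-y|}$; this produces a multiple of $B(u)$, the coefficient being exactly the one predicted by the scaling law $B(u^t)=t\,B(u)$. Finally, testing \eqref{eq-Omegar-s} against $u$ yields the companion (Nehari) identity $\int_{\Omega_r}|\nabla u|^2+\int_{\Omega_r}Vu^2+\lambda a^2+B(u)=s\int_{\Omega_r}|u|^p$.

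The last step is to take the linear combination of the two identities that cancels $\lambda a^2$ and $\int_{\Omega_r}Vu^2$, leaving precisely the gradient, nonlocal, power and $\nabla V\cdot x$ terms of $P_{r,s}(u)$, up to the surviving boundary integral. For $r=\infty$ this boundary integral is absent and one gets the clean identity $P_{r,s}(u)=0$. The step I expect to be most delicate is exactly this boundary term for finite $r$: on a convex $\Omega_r$ with $0\in\Omega_r$ one has $x\cdot\nu\ge0$, so the surviving contribution $\tfrac12\int_{\partial\Omega_r}(x\cdot\nu)|\partial_\nu u|^2\,d\sigma$ is nonnegative and vanishes only as $r\to\infty$; keeping careful track of this boundary term, and of the precise numerical coefficient multiplying $\int_{\Omega_r}(\nabla V\cdot x)u^2$ (which the mass-preserving generator $\tfrac32u+x\cdot\nabla u$ pins down), is where the bookkeeping must be done with care. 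A cleaner, equivalent route for the entire-space case $r=\infty$ is to differentiate $t\mapsto E_{\infty,s}(u^t)$ at $t=1$ along the mass-preserving scaling $u^t$: since $u^t\in\mathcal S_a$ and $u$ is a constrained critical point, $\frac{d}{dt}\big|_{t=1}E_{\infty,s}(u^t)=0$, and an explicit differentiation of each term reproduces $P_{\infty,s}(u)=0$ without any separate appeal to the Nehari identity.
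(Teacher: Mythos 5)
Your route (the Rellich multiplier $x\cdot\nabla u$ plus the Nehari identity, with a regularity bootstrap to justify the multiplier) is genuinely different from the paper's proof, which never tests the equation at all: the paper rescales \emph{both} the function and the domain, $u^t\in\mathcal{S}_{r/t,a}$, computes $E_{r/t,s}(u^t)$ explicitly by a change of variables, and asserts that $\frac{d}{dt}\big|_{t=1}E_{r/t,s}(u^t)=\nabla E_{2r,s}(u)[\partial_t u^t|_{t=1}]=\lambda\,\frac{1}{2}\frac{d}{dt}\big|_{t=1}\|u^t\|_2^2=0$. Your computation is correct as far as it goes, and it is precisely the honest version of this lemma: carried out on $\Omega_r$ it yields
\[
\int_{\Omega_r}|\nabla u|^2dx+\frac14B(u)-\frac{3(p-2)s}{2p}\int_{\Omega_r}|u|^pdx-\frac12\int_{\Omega_r}(\nabla V(x)\cdot x)\,u^2dx=\frac12\int_{\partial\Omega_r}(x\cdot\nu)\,|\partial_\nu u|^2\,d\sigma ,
\]
(note the coefficient $\frac12$, not $1$, on the $\nabla V\cdot x$ term; this is the value your ``mass-preserving generator'' remark pins down, and it is the one consistent with the paper's own identity \eqref{e2.16}, where $\frac12\int V(3u^2+2u\nabla u\cdot x)dx$ integrates by parts to $-\frac12\int(\nabla V\cdot x)u^2dx$). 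The gap in your proposal is that this boundary term cannot be removed for finite $r$: since $\Omega_r$ is convex with $0\in\Omega_r$ we have $x\cdot\nu\ge 0$, and $\partial_\nu u\equiv 0$ on $\partial\Omega_r$ together with $u\in H^1_0(\Omega_r)$ would force $u\equiv 0$ by unique continuation (for a ball, Hopf's lemma gives $\partial_\nu u<0$ outright). So your argument proves that $P_{r,s}(u)$ is \emph{strictly positive} for a nontrivial solution, i.e.\ the identity $P_{r,s}(u)=0$ as stated is false for $r<\infty$; you sense this (``vanishes only as $r\to\infty$'') but stop short of drawing the conclusion, so as written the proposal does not prove the stated lemma at finite $r$ --- though the fault lies with the statement, not with your method.

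For completeness: the paper's own proof fails exactly at the chain-rule step. The curve $t\mapsto u^t$ (extension by zero) is differentiable in $L^2(\R^3)$ but only H\"older-$\frac12$ in $H^1_0(\Omega_{2r})$, because the gradient of the zero-extension of a function in $H^2(\Omega_r)\cap H^1_0(\Omega_r)$ jumps across $\partial\Omega_r$; moreover its formal derivative $\frac32u+x\cdot\nabla u$ does not lie in $H^1_0(\Omega_{2r})$, so $\nabla E_{2r,s}(u)[\partial_t u^t|_{t=1}]$ is not even defined. Applied to the first Dirichlet eigenfunction ($-\Delta u=\mu u$ in $B_1$, a constrained critical point of $\frac12\|\nabla u\|_2^2$ on $\mathcal{S}_{1,a}$), the identical argument would ``prove'' $\|\nabla u\|_2^2=\frac{d}{dt}\big|_{t=1}\frac{t^2}{2}\|\nabla u\|_2^2=0$. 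At $r=\infty$ both your main route and your alternative scaling route are legitimate (no boundary, and entire solutions have the decay/regularity needed), and they give the clean identity there. What your corrected finite-$r$ statement buys is in fact exactly what the paper needs downstream: the inequality $P_{r,s}(u)\ge 0$ (boundary term discarded with its favorable sign, which is where convexity and $0\in\Omega$ really enter) reproduces the uniform gradient bound of Lemma \ref{lemma-unif-bound-s} verbatim, with an even better constant; but be aware that the boundary term enters with the \emph{unfavorable} sign in the lower bound for $\lambda_r$ in Proposition \ref{prop-existence-sol} (second line of \eqref{syst-ABCDEr}), so a full repair of the paper also requires showing that $\int_{\partial\Omega_r}(x\cdot\nu)|\partial_\nu u_r|^2d\sigma\to 0$ as $r\to\infty$, not merely that it is nonnegative.
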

            We note that, if $V=0$, then $P_{\infty,1}=P$, where $P$ is defined in the Introduction.
        \color{black}\begin{proof}
				Given a weak solution $u:=u_{r,s}\in \mathcal{S}_{r,a}$ to (\ref{eq-Omegar-s}) and a real number $t\in(1/2,3/2)$. On the one hand, we observe that $u^t(x)=t^{3/2}u(tx)\in \mathcal{S}_{\frac{r}{t},a}\subset \mathcal{S}_{2r,a}$ and
				\begin{equation}\notag
					\begin{aligned}
						\frac{d}{dt}\bigg|_{t=1}E_{\frac{r}{t},s}(u_{t})&=\frac{d}{dt}\bigg|_{t=1}E_{2r,s}(u^{t})=\nabla E_{2r,s}(u)[(\partial_t u^t)|_{t=1}]
						=\nabla E_{r,s}(u)[\partial_t u^t|_{t=1}]\\
						&=\lambda\int_{\Omega_r}u(\partial_t u^t|_{t=1})
						=\frac{\lambda}{2}\frac{d}{dt}\bigg|_{t=1}\left(\int_{\Omega_{\frac{r}{t}}} (u^t)^2 dx\right)=0.
					\end{aligned}
				\end{equation}
				On the other hand, a direct computation based on a change of variables shows that
				$$\frac{d}{dt}\bigg|_{t=1}E_{\frac{r}{t},s}(u^{t})=\int_{\Omega_r}|\nabla u|^2dx+\frac{1}{4}B(u)-\frac{3(p-2)s}{2p}\int_{\Omega_r}|u|^pdx-\int_{\Omega_r}\nabla V(x)\cdotp x u^2dx,$$
				which concludes the proof.
			\end{proof}
			\color{black}
            
            In Proposition \ref{prop-sol-Omegar-s}, we obtain a normalized  solution $(\lambda_{r,s},u_{r,s})$ to Problem (\ref{eq-Omegar-s}) in $\Omega_r$. The Pohozaev identity enables us to prove a bound for the $L^2(\Omega_r)$-norm of the gradient of $u_{r,s}$ which is uniform in $s$, so that we will be able to take the limit as $s\to 1$.
			\begin{lemma}\label{lemma-unif-bound-s}
				Assume that $(V_1)$ holds. Let $s\in[\frac{1}{2},1]$ be such that the solution  $(\lambda_{r,s},u_{r,s})$ to Problem (\ref{eq-Omegar-s}) constructed in Proposition \ref{prop-sol-Omegar-s} exists. Then we have
				\begin{equation}
					\label{bound-u_rs}
					\int_{\Omega_r}|\nabla u_{r,s}|^2dx\le \frac{6(p-2)}{3p-10}m_{r,s}(a)+\frac{4a^2}{3p-10}\|W\|_\infty,
				\end{equation}
				for any $r\geq r_a$, $s\in[\frac{1}{2},1]$.
			\end{lemma}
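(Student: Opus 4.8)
The plan is to combine the energy identity $E_{r,s}(u_{r,s})=m_{r,s}(a)$, which holds by Proposition \ref{prop-sol-Omegar-s}, with the Pohozaev identity $P_{r,s}(u_{r,s})=0$ from Lemma \ref{lemma-pohozaev}, forming the linear combination that cancels the $L^p$-term and leaves a quantity bounded below by the gradient energy. Write $u:=u_{r,s}$ for brevity and recall that $\|u\|_{L^2(\Omega_r)}=a$, since $u\in\mathcal{S}_{r,a}$.

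First I would identify the correct multiplier. The coefficient of $\int_{\Omega_r}|u|^p\,dx$ equals $-s/p$ in $E_{r,s}$ and $-\frac{3(p-2)s}{2p}$ in $P_{r,s}$, so the combination $E_{r,s}(u)-\frac{2}{3(p-2)}P_{r,s}(u)$ contains no $L^p$-term; since $P_{r,s}(u)=0$ this combination equals $m_{r,s}(a)$. Carrying out the algebra with $\beta:=\frac{2}{3(p-2)}$ would give
$$m_{r,s}(a)=\frac{3p-10}{6(p-2)}\int_{\Omega_r}|\nabla u|^2\,dx+\frac{1}{2}\int_{\Omega_r}V(x)u^2\,dx+\frac{3p-8}{12(p-2)}B(u)+\frac{2}{3(p-2)}\int_{\Omega_r}\nabla V(x)\cdot x\,u^2\,dx,$$
where the coefficient $\frac{3p-10}{6(p-2)}$ of the gradient term and the coefficient $\frac{3p-8}{12(p-2)}$ of $B(u)$ are both strictly positive because $p>\frac{10}{3}$.

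Next I would discard the nonnegative contributions and estimate the last term. By $(V_1)$ we have $V\ge 0$, so $\int_{\Omega_r}V(x)u^2\,dx\ge 0$, and since $B(u)\ge 0$ enters with a positive coefficient it may be dropped as well. For the remaining term I use $W=\nabla V\cdot x$ together with the constraint $\|u\|_{L^2(\Omega_r)}^2=a^2$ to obtain $\int_{\Omega_r}\nabla V(x)\cdot x\,u^2\,dx\ge-\|W\|_\infty a^2$. Substituting these bounds yields
$$m_{r,s}(a)\ge\frac{3p-10}{6(p-2)}\int_{\Omega_r}|\nabla u|^2\,dx-\frac{2}{3(p-2)}\|W\|_\infty a^2.$$
Solving for $\int_{\Omega_r}|\nabla u|^2\,dx$ and simplifying the constants, noting that $\frac{6(p-2)}{3p-10}\cdot\frac{2}{3(p-2)}=\frac{4}{3p-10}$, would give precisely the estimate (\ref{bound-u_rs}).

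I do not expect any serious obstacle: the argument is a purely algebraic consequence of the two identities together with the sign conditions $V\ge 0$, $B(u)\ge 0$, and $p>\frac{10}{3}$. The only point requiring care is checking that every coefficient appearing after the linear combination has the correct sign, so that the nonnegative terms may legitimately be dropped and the resulting inequality points in the desired direction; this is exactly where $3p-10>0$ is used, both to make the gradient coefficient positive and to ensure the final bound is meaningful.
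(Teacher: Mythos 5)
Your proposal is correct and follows essentially the same route as the paper: both eliminate the $L^p$-term by combining the energy identity $E_{r,s}(u_{r,s})=m_{r,s}(a)$ with the Pohozaev identity of Lemma \ref{lemma-pohozaev}, then discard the nonnegative terms $\int_{\Omega_r}V(x)u^2\,dx$ and $B(u)$ (using $V\ge 0$ and $3p-8>0$) and bound $\int_{\Omega_r}\nabla V(x)\cdot x\,u^2\,dx$ from below by $-\|W\|_\infty a^2$. The only difference is cosmetic: you normalize the linear combination so that the energy carries coefficient $1$, while the paper substitutes the $L^p$-term from the energy identity into the Pohozaev identity, which is the same computation.
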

			\color{black}\begin{proof}
				In the proof we will set $u:=u_{r,s}$. By the Pohozaev identity (\ref{Pohozaev}) and the fact that $m_{r,s}(a)=E_{r,s}(u_{r,s})$ we have
				\begin{equation}\notag
					\begin{aligned}
						\int_{\Omega_r}|\nabla u|^2dx&=\int_{\Omega_r}\nabla V(x)\cdotp x u^2 dx+\frac{3(p-2)s}{2p}\int_{\Omega_r}|u|^pdx-\frac{1}{4}B(u)\\
						&=\int_{\Omega_r}\nabla V(x)\cdotp x u^2 dx-\frac{1}{4}B(u)\\
						&+\frac{3(p-2)}{2}\left(\frac{1}{2}\int_{\Omega_r}|\nabla u|^2dx+\frac{1}{4}B(u)-m_{r,s}(a)+\frac{1}{2}\int_{\Omega_r}V(x)u^2 dx\right)\\
						&\ge \frac{3(p-2)}{4}\int_{\Omega_r}|\nabla u|^2dx+\int_{\Omega_r}\nabla V(x)\cdotp x u^2 dx+\frac{3(p-2)}{4}\int_{\Omega_r}V(x)u^2 dx-\frac{3(p-2)}{2}m_{r,s}(a).
					\end{aligned}
				\end{equation}
				Using that $V(x)\ge 0$ we have the required estimate.
			\end{proof}
			\begin{remark}\label{rem-unif-est-u-rs}
				Due the upper bound for the mountain-pass level $m_{r,s}(a)$ given in Lemma \ref{lemma-MP-geom}, we have the uniform estimate
				\begin{equation}
					\label{bound-u_rs-unif-s}
					\int_{\Omega_r}|\nabla u_{r,s}|^2dx\le \frac{6(p-2)}{3p-10}\tilde{c}_a+\frac{4a^2}{3p-10}\|W\|_\infty,
				\end{equation}
				for any $r\geq r_a$, $s\in[\frac{1}{2},1]$, where $\tilde{c}_a$ is defined in Lemma \ref{lemma-MP-geom} independent of $s$ and $r$.
			\end{remark}
			\begin{proposition}\label{prop-existence-Omega-r}
				Assume that $(V_1)$ holds. Then for any $a\in(0,a_0)$ and $r\ge r_a$ there exists a solution $(\lambda_r,u_r)\in\R\times H^1_0(\Omega_r)$ to Problem (\ref{eq-Omega_r}) such that $E_{r,1}(u_r)=m_{r,1}(a)$, $u_r\ge 0$ fulfilling
				\begin{equation}
					\label{bound-u_r1}
					\int_{\Omega_r}|\nabla u_{r}|^2dx\le \frac{6(p-2)}{3p-10}(1+\theta)c_a+\frac{4a^2}{3p-10}\|W\|_\infty,\qquad\forall r\ge \tilde{r}_a
				\end{equation}
				for some $\tilde{r}_a>r_a$ ($r_a$ is given in Lemma \ref{lemma-MP-geom}).
			\end{proposition}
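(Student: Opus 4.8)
The plan is to construct $(\lambda_r,u_r)$ as the limit, as $s\to 1^-$, of the solutions $(\lambda_{r,s},u_{r,s})$ of \eqref{eq-Omegar-s} produced in Proposition~\ref{prop-sol-Omegar-s}. The decisive structural fact is that $\Omega_r$ is bounded, so the embedding $H^1_0(\Omega_r)\hookrightarrow L^q(\Omega_r)$ is compact for every $q\in[2,6)$; this is what makes the limit tractable here, in contrast with the whole-space setting. Since by Proposition~\ref{prop-sol-Omegar-s} the set of admissible $s$ has full measure in $[\tfrac12,1]$, I would first fix a sequence $s_n\to 1^-$ inside it and set $u_n:=u_{r,s_n}$. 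By Remark~\ref{rem-unif-est-u-rs} the gradients $\|\nabla u_n\|_2$ are bounded uniformly in $n$, hence, up to a subsequence, $u_n\rightharpoonup u_r$ weakly in $H^1_0(\Omega_r)$ and $u_n\to u_r$ strongly in $L^q(\Omega_r)$ for all $q\in[2,6)$. Strong $L^2$-convergence preserves the constraint $\|u_r\|_2=a$ and the sign $u_r\ge 0$; moreover $\int_{\Omega_r}V u_n^2\to\int_{\Omega_r}Vu_r^2$ (since $V\in L^\infty$), $\int_{\Omega_r}|u_n|^p\to\int_{\Omega_r}|u_r|^p$, and $B(u_n)\to B(u_r)$ by the Hardy--Littlewood--Sobolev inequality.

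Testing \eqref{eq-Omegar-s} against $u_n$ gives $\lambda_{r,s_n} a^2 = s_n\int_{\Omega_r}|u_n|^p-\|\nabla u_n\|_2^2-\int_{\Omega_r}Vu_n^2-B(u_n)$; since the gradient term is bounded and all the others converge, $\lambda_{r,s_n}$ is bounded and, along a further subsequence, $\lambda_{r,s_n}\to\lambda_r$. Passing to the limit in the weak formulation — the gradient term by weak convergence and the remaining terms by the strong convergences above — shows that $(\lambda_r,u_r)$ is a weak solution of \eqref{eq-Omega_r}. To upgrade to strong convergence I would compare the identity obtained by testing the equation for $u_n$ against $u_n$ with the one obtained by testing the limit equation against $u_r$: since every lower-order term converges and $\lambda_{r,s_n}a^2\to\lambda_r a^2$, this forces $\|\nabla u_n\|_2\to\|\nabla u_r\|_2$, which together with the weak convergence yields $u_n\to u_r$ strongly in $H^1_0(\Omega_r)$. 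In particular $E_{r,1}(u_n)\to E_{r,1}(u_r)$.

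It remains to identify the energy level and to derive \eqref{bound-u_r1}. Writing $E_{r,1}(u_n)=E_{r,s_n}(u_n)-\tfrac{1-s_n}{p}\int_{\Omega_r}|u_n|^p=m_{r,s_n}(a)+o(1)$, I get $E_{r,1}(u_r)=\lim_n m_{r,s_n}(a)$, so I must show $\lim_{s\to1^-}m_{r,s}(a)=m_{r,1}(a)$. The map $s\mapsto m_{r,s}(a)$ is nonincreasing, whence $\liminf_{s\to1^-}m_{r,s}(a)\ge m_{r,1}(a)$. For the reverse bound, fix $\varepsilon>0$ and a path $\gamma_\varepsilon\in\Gamma_{r,a}$ with $\max_t E_{r,1}(\gamma_\varepsilon(t))<m_{r,1}(a)+\varepsilon$; since $t\mapsto\gamma_\varepsilon(t)$ is continuous into $L^p(\Omega_r)$, the quantity $M:=\max_t\int_{\Omega_r}|\gamma_\varepsilon(t)|^p$ is finite, and the identity $E_{r,s}(\gamma_\varepsilon(t))=E_{r,1}(\gamma_\varepsilon(t))+\tfrac{1-s}{p}\int_{\Omega_r}|\gamma_\varepsilon(t)|^p$ gives $m_{r,s}(a)\le m_{r,1}(a)+\varepsilon+\tfrac{1-s}{p}M$. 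Letting $s\to1^-$ and then $\varepsilon\to0$ yields $\limsup_{s\to1^-}m_{r,s}(a)\le m_{r,1}(a)$, so the level is left-continuous at $s=1$ and $E_{r,1}(u_r)=m_{r,1}(a)$.

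Finally, for the gradient bound I pass to the limit in \eqref{bound-u_rs}: using the strong $H^1$-convergence just established together with $m_{r,s_n}(a)\to m_{r,1}(a)$, I obtain $\int_{\Omega_r}|\nabla u_r|^2\le\tfrac{6(p-2)}{3p-10}m_{r,1}(a)+\tfrac{4a^2}{3p-10}\|W\|_\infty$, and then the sharp upper estimate $m_{r,1}(a)<(1+\theta)c_a$ from \eqref{up-est-m_r1} (valid for $r\ge r_a$, possibly after enlarging $r_a$ to some $\tilde r_a$) gives exactly \eqref{bound-u_r1}. I expect the main obstacle to be the sharp identification of the limiting level: controlling $\limsup_{s\to1^-}m_{r,s}(a)$ by $m_{r,1}(a)$ rather than by the cruder uniform bound $\tilde c_a$ of Lemma~\ref{lemma-MP-geom} is what produces the coefficient $(1+\theta)c_a$ in \eqref{bound-u_r1}, which is in turn essential for the later uniform-in-$r$ estimates; by contrast, the compactness required for all the other steps is automatic because $\Omega_r$ is bounded.
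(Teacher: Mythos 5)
Your proposal is correct and follows essentially the same route as the paper: take $s_n\to 1^-$ along the admissible set from Proposition \ref{prop-sol-Omegar-s}, use the uniform gradient bound and the compact embeddings $H^1_0(\Omega_r)\hookrightarrow L^q(\Omega_r)$ to get strong $H^1$-convergence to a nonnegative solution on the constraint, identify the level, and combine \eqref{bound-u_rs} with \eqref{up-est-m_r1} for the gradient estimate. Your explicit verification that $\lim_{s\to 1^-}m_{r,s}(a)=m_{r,1}(a)$ (monotonicity plus the near-optimal path comparison) is in fact a welcome addition, since the paper uses this left-continuity only implicitly when it asserts $E_{r,1}(u_r)=m_{r,1}(a)$ and bounds $\lim_{s\to 1}m_{r,s}(a)$ by $(1+\theta)c_a$.
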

			\begin{proof}
				By Proposition \ref{prop-sol-Omegar-s}, for almost every $s\in[\frac{1}{2},1]$, there exists a solution $(\lambda_{r,s},u_{r,s})$ to Problem (\ref{eq-Omegar-s}) with $u_{r,s}\ge 0$ fulfilling (\ref{bound-u_rs}). By Remark \ref{rem-unif-est-u-rs}, the $H^1(\Omega_r)$-norm of $u_{r,s}$ is bounded uniformly in $s$ and the same is true for the Lagrange multipliers $\lambda_{r,s}$, so that there exist a sequence $s_n\to 1$ and a couple $(\lambda_r,u_r)\in \R\times H^1_0(\Omega_r)$ such that as $n\to\infty$,
				$$\lambda_{r,s_n}\to\lambda_r ~\text{in}~ \R,\quad u_{r,s_n}\rightharpoonup u_r~\text{weakly\,in}\,H^1(\Omega_r).$$
				 In particular the equation
				$$-\Delta u_r+\lambda_r u_r+V(x)u_r+\phi_{u_r} u_r=|u_r|^{p-2}u_r\quad\text{in}\quad\Omega_r$$
				is satisfied and, using the weak lower semicontinuity of the norm and taking the limit as $s\to 1$ in (\ref{bound-u_rs-unif-s}), we can see that
				\begin{equation}\notag
					\begin{aligned}
						\|\nabla u_r\|^2_2\le\liminf_{s\to 1}\|\nabla u_{r,s}\|^2_2&\le\frac{6(p-2)}{3p-10}\lim_{s\to 1}m_{r,1}(a)+\frac{4a^2}{3p-10}\|W\|_\infty\\
                        &\le\frac{6(p-2)}{3p-10}(1+\theta)c_a+\frac{4a^2}{3p-10}\|W\|_\infty,
					\end{aligned}
				\end{equation}
				that is $u_r$ fulfills (\ref{bound-u_r1}).
				By the compactness of the embeddings $H^1_0(\Omega_r)\subset L^q(\Omega_r)$ for $2\le q<6$, we can see that $u_r\in \mathcal{S}_{r,a}$, $u_r\ge 0$ and $u_{r,s_n}\to u_r$ strongly in $H^1(\Omega_r)$, which yields that $$m_{r,s}(a)=E_{r,s}(u_{r,s})\to E_{r,1}(u_r)=m_{r,1}(a).$$
			\end{proof}
			\begin{remark}
				Proposition \ref{prop-existence-Omega-r} concludes the proof of Theorem \ref{th-bd-V_1}. Moreover, (\ref{bound-u_r1}) shows that $\{u_r\}_{r\geq r_a}$ is bounded in $H^1(\R^3)$ uniformly in $r$.
			\end{remark}

			\subsection{Passing to the limit as $r\to\infty$}\label{sec-lim-V_1}
			We note that the upper bound (\ref{bound-u_r1}) is uniform in $r$. As a consequence $u_r$ is bounded in $H^1(\R^3)$. Therefore, passing to the limit as $r\to\infty$, we can prove the following existence result.
			\begin{proposition}\label{prop-existence-sol}
				Assume that $(V_1)$ holds. For any $a\in(0,a_0)$ and $r\geq r_a$, let $(\lambda_r, u_r)\in\R\times H^1_0(\Omega_r)$ be the solution constructed in Theorem \ref{th-bd-V_1} with $\Omega:=B_1(0)$. Then
				there exist $a^*\in(0,a_0)$ and an increasing sequence $r_n\to\infty$ such that for any $a\in(0,a^*)$,
				\begin{center}
					$u_{r_n}\rightharpoonup u$ weakly in $H^1(\R^3)$ and $\lambda_{r_n}\to\lambda$ in $\R$,
				\end{center}  where $(\lambda,u)\in\R\times H^1(\R^3)$ is the solution to the equation
				\begin{equation}\label{eq-R3}
					-\Delta u+\lambda u+V(x)u+\phi_u u=|u|^{p-2}u\quad\text{in}\quad\R^3
				\end{equation}
				with $\lambda>0$ and $u\ge 0$. 

			\end{proposition}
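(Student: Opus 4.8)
The plan is to extract convergent subsequences from the family $\{(\lambda_r,u_r)\}_{r\ge r_a}$ constructed in Proposition \ref{prop-existence-Omega-r} and to pass to the limit in the weak formulation of \eqref{eq-Omega_r}. First I would recall that, extending each $u_r$ by $0$ outside $\Omega_r$, one has $u_r\in\mathcal{S}_{r,a}$ and the uniform bound \eqref{bound-u_r1}; together with $\|u_r\|_2=a$ this shows that $\{u_r\}_{r\ge r_a}$ is bounded in $H^1(\R^3)$ independently of $r$. Choosing $a^*\in(0,a_0)$ as in Theorem \ref{th-unif-bound} and $a\in(0,a^*)$, that theorem gives $0<\liminf_{r\to\infty}\lambda_r\le\limsup_{r\to\infty}\lambda_r<\infty$. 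Hence there exist an increasing sequence $r_n\to\infty$, a function $u\in H^1(\R^3)$ and a number $\lambda>0$ such that $u_{r_n}\rightharpoonup u$ weakly in $H^1(\R^3)$, $u_{r_n}\to u$ strongly in $L^q_{\mathrm{loc}}(\R^3)$ for every $q\in[2,6)$ and a.e.\ in $\R^3$, and $\lambda_{r_n}\to\lambda$. Since $u_r\ge 0$ for every $r$, the a.e.\ convergence yields $u\ge 0$.

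Next I would pass to the limit in the equation tested against an arbitrary $\varphi\in C_c^\infty(\R^3)$. As $r_n\to\infty$, for $n$ large we have $\operatorname{supp}\varphi\subset\Omega_{r_n}$, so $\varphi$ is admissible and
\begin{equation*}
\int_{\R^3}\nabla u_{r_n}\cdot\nabla\varphi\,dx+\lambda_{r_n}\int_{\R^3}u_{r_n}\varphi\,dx+\int_{\R^3}V(x)u_{r_n}\varphi\,dx+\int_{\R^3}\phi_{u_{r_n}}u_{r_n}\varphi\,dx=\int_{\R^3}|u_{r_n}|^{p-2}u_{r_n}\varphi\,dx.
\end{equation*}
The gradient term converges by weak convergence in $H^1(\R^3)$; the term $\lambda_{r_n}\int u_{r_n}\varphi$ converges since $\lambda_{r_n}\to\lambda$ and $u_{r_n}\to u$ in $L^2_{\mathrm{loc}}$; the potential term converges because $V\in L^\infty$ (by $(V_1)$) and $u_{r_n}\to u$ in $L^2$ on the compact set $\operatorname{supp}\varphi$; and $\int|u_{r_n}|^{p-2}u_{r_n}\varphi$ converges because $u_{r_n}\to u$ in $L^p_{\mathrm{loc}}$ (as $p<6$), so that $|u_{r_n}|^{p-2}u_{r_n}\to|u|^{p-2}u$ in $L^{p/(p-1)}_{\mathrm{loc}}$.

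The main technical obstacle is the Coulomb term $\int\phi_{u_{r_n}}u_{r_n}\varphi$. To handle it, I would observe that, since $\{u_{r_n}\}$ is bounded in $H^1(\R^3)\hookrightarrow L^{12/5}(\R^3)$, the densities $u_{r_n}^2$ are bounded in $L^{6/5}(\R^3)$, so by the Hardy-Littlewood-Sobolev inequality $\phi_{u_{r_n}}=|x|^{-1}\ast u_{r_n}^2$ is bounded in $D^{1,2}(\R^3)$, hence in $L^6(\R^3)$. Passing to a further subsequence, $\phi_{u_{r_n}}\rightharpoonup\psi$ weakly in $D^{1,2}(\R^3)$. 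Testing $-\Delta\phi_{u_{r_n}}=u_{r_n}^2$ against any $\zeta\in C_c^\infty(\R^3)$ and using weak convergence on the left together with $u_{r_n}^2\to u^2$ in $L^1_{\mathrm{loc}}$ on the right, I obtain $-\Delta\psi=u^2$, whence $\psi=\phi_u$ by uniqueness in $D^{1,2}(\R^3)$. Finally, since $u_{r_n}\varphi\to u\varphi$ strongly in $L^{6/5}(\R^3)$ (the supports lying in the fixed compact set $\operatorname{supp}\varphi$, on which $L^2$ convergence dominates $L^{6/5}$ convergence) while $\phi_{u_{r_n}}\rightharpoonup\phi_u$ weakly in $L^6(\R^3)$, the weak-strong duality pairing gives $\int\phi_{u_{r_n}}u_{r_n}\varphi\to\int\phi_u u\varphi$.

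Combining these limits, $(\lambda,u)$ satisfies the weak formulation of \eqref{eq-R3} for every $\varphi\in C_c^\infty(\R^3)$, hence by density it is a weak solution of \eqref{eq-R3}; together with $\lambda>0$ and $u\ge0$ established above, this proves the proposition. I expect the only genuinely deep input to be the positivity $\liminf_{r\to\infty}\lambda_r>0$ supplied by Theorem \ref{th-unif-bound}, which is where the smallness threshold $a^*$ enters; passing to the limit in the local terms is routine, and the nonlocal term, although delicate, is controlled by the weak-strong argument above once the $D^{1,2}$ bound is in place. I would also stress that this proposition does \emph{not} assert $\|u\|_2=a$: recovering the $L^2$-constraint, i.e.\ ruling out loss of mass at infinity, is a separate compactness issue that is addressed afterwards using assumption $(V_2)$.
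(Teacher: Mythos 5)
Your limiting argument (extraction of a weakly convergent subsequence, passing to the limit term by term against $\varphi\in C^\infty_c(\R^3)$, and the $D^{1,2}$ treatment of the Coulomb term) is correct, and in fact more detailed than what the paper writes for that step; you are also right that $\|u\|_2=a$ is not claimed here and is recovered later via $(V_2)$. The genuine gap is where you get the boundedness of $\lambda_r$ and the positivity $\lambda>0$: you invoke Theorem \ref{th-unif-bound}, but in this paper the bound \eqref{bound-lambda} of Theorem \ref{th-unif-bound} is not an independent input --- it is deduced \emph{from the proof of Proposition \ref{prop-existence-sol}} (this is exactly the content of Remark \ref{rem-lambda-r>0}). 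So your argument is circular: the ``only genuinely deep input'', as you call it, is precisely what this proposition is responsible for establishing, and no proof of it exists in the paper prior to this point. Note also that without an a priori bound on $\lambda_r$ you cannot even extract the convergent subsequence $\lambda_{r_n}\to\lambda$.

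What the paper actually does at this point is the following. Writing $A_r=\|\nabla u_r\|_2^2$, $B_r=B(u_r)$, $C_r=\int_{\Omega_r}Vu_r^2\,dx$, $D_r=\int_{\Omega_r}\nabla V\cdot x\,u_r^2\,dx$, $E_r=\|u_r\|_p^p$, it combines three identities --- the energy identity $E_{r,1}(u_r)=m_{r,1}(a)$, the Pohozaev identity \eqref{Pohozaev}, and the Nehari-type identity obtained by testing \eqref{eq-Omega_r} with $u_r$ --- and solves the resulting linear system for the Lagrange multiplier:
\begin{equation*}
(3p-10)a^2\lambda_r=2(6-p)m_{r,1}(a)-2(p-2)C_r-2(p-2)D_r-4(p-3)B_r.
\end{equation*}
Boundedness of $A_r$ (from \eqref{bound-u_r1}) gives boundedness of $B_r,C_r,D_r,E_r$ and hence of $\lambda_r$; and the strict positivity of the right-hand side, uniformly for large $r$, follows from the lower bound $m_{r,1}(a)\ge c_a$ of Lemma \ref{lemma-MP-geom}, the estimate $B_r\le\tilde{C}a^3c_a^{1/2}$, and crucially the smallness conditions in $(V_1)$, namely $\|V\|_\infty<2\theta c_a/a^2$, $\|W\|_\infty<\eta c_a/a^2$ with $\eta+2\theta<\frac{6-p}{p-2}$, after shrinking $a$ (using that $c_a$ is nonincreasing near $0$, so $a^3c_a^{-1/2}$ is small for small $a$). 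This quantitative step is where $a^*$ and the structural hypotheses of $(V_1)$ genuinely enter, and it is absent from your proposal; without it, or some substitute for it, the conclusion $\lambda>0$ cannot be reached.
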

			\color{black}
			\begin{proof}
				For $r\geq r_a$, we set 
				\begin{equation}\label{ABCDEr}
					\begin{aligned}
						&A_r:=\int_{\Omega_r}|\nabla u_r|^2dx, ~B_r:=B(u_r),~C_r:=\int_{\Omega_r}V(x)u_r^2dx,\\
						&D_r:=\int_{\Omega_r}\nabla V(x)\cdot x u^2_r dx,~E_r:=\int_{\Omega_r}|u_r|^pdx.
					\end{aligned}
				\end{equation}
				Using that $E_{r,1}(u_r)=m_{r,1}(a)$, the Pohozaev identity (\ref{Pohozaev}) and testing equation (\ref{eq-Omega_r}) with $u_r$, we have
				\begin{equation}\label{syst-ABCDEr}
					\begin{aligned}
						&A_r+C_r+\frac{1}{2}B_r-\frac{2}{p}E_r=2m_{r,1}(a),\\
						&A_r+\frac{1}{4}B_r-\frac{3(p-2)}{2p}E_r-D_r=0,\\
						&A_r+C_r+\lambda_r a^2+B_r=E_r.
					\end{aligned}
				\end{equation}
				Due to the upper estimate (\ref{bound-u_r1}), $A_r$ is bounded uniformly in $r$, 
			which yields $u_r$ is bounded uniformly in $r$ in $H^1(\R^3)$. By the Sobolev embeddings, \eqref{local-term} and $(V_1)$, one has
				\begin{equation}\label{est-Br-black}
					B_r\le \bar{C}A_r^{1/2}\|u_r\|^3_2=\bar{C}A_r^{1/2}a^3\le \tilde{C} a^3 c_a^{1/2},
				\end{equation}
				for some constant $\tilde{C}>0$, hence in particular we can see that $B_r$ is bounded uniformly in $r$. By the Sobolev embeddings and system (\ref{syst-ABCDEr}), it is possible to see that $C_r,\,D_r,\,E_r$ and 
				$\lambda_r$ are also bounded in $\R$, which yields the existence of an increasing sequence $r_n\to\infty$ such that
				$$A_{r_n}\to A,~B_{r_n}\to B,~C_{r_n}\to C,~D_{r_n}\to D,~E_{r_n}\to E,~\lambda_{r_n}\to\lambda\quad\text{in}\quad\R
				$$
				and
				\begin{equation*}
					u_{r_n}\rightharpoonup u\quad\text{in}\quad H^1(\R^3)
				\end{equation*}
				for some $A,\,B,\,C,\,D,\,E,\,\lambda\in\R$ and $u\in H^1(\R^3)$.  Moreover, one can see that $u\geq0$ is a weak solution to (\ref{eq-R3}) (here we use the fact $u_r\geq0$).\\
				
				It remains to prove that $\lambda>0$. Subtracting the second relation in (\ref{syst-ABCDEr}) to the first one, we have
				\begin{equation}\notag
					\begin{aligned}
						(3p-10)a^2\lambda_r&=(3p-10)\left(\frac{p-2}{p}E_r-\frac{1}{2}B_r-2m_{r,1}(a)\right)\\
						&=2(p-2)\big(2m_{r,1}(a)-C_r-D_r-\frac{1}{4}B_r\big)-(3p-10)\left(\frac{1}{2}B_r+2m_{r,1}(a)\right)\\
						&=2(6-p)m_{r,1}(a)-2(p-2)C_r-2(p-2)D_r-4(p-3)B_r\\
						&\ge 2(6-p)c_a-2(p-2)C_r-2(p-2)D_r-4(p-3)B_r.
					\end{aligned}
				\end{equation}

		We claim: there exists $a^*\in(0,a_0)$ such that  for any $a\in(0,a^*)$, there exist  $\delta>0$ and $\bar{r}_a>r_a$ such that
				\begin{equation}\label{lower-est-lambda-r-black}
					(3p-10)a^2\lambda_r>\delta c_a,\quad\forall ~r>\bar{r}_a.
				\end{equation}
		Due to (\ref{est-V-infty})  and (\ref{est-Br-black}), there is $\bar{r}_a>r_a$ large enough such that
				\begin{equation}\notag
					\begin{aligned}
						2(p-2)(C_r+|D_r|)+4(p-3)B_r&\le 2(p-2)(\|V\|_\infty a^2+\|W\|_\infty a^2)+\tilde{C}a^3 c_a^{1/2}\\
						&< 2(p-2)(2\theta+\eta+\tilde{C}a^3 c_a^{-1/2})c_a\\
						&<2(6-p-\delta)c_a
					\end{aligned}
				\end{equation}
				if $\delta>0$ is small enough and $a\in(0,a^*)$ with 
				$a^*>0$ small enough (here we make use of the fact $a\mapsto c_a$ is nonincreasing in a right neighbourhood of $0$, see \cite[Theorem 1.2]{B2013}). This concludes the proof of the claim.\\
				
			Take the limit as $r\to\infty$ in (\ref{lower-est-lambda-r-black}), which shows that $$(3p-10)a^2\lambda\ge \delta c_a>0,\quad\forall\,0<a<a^*.$$
			This completes the proof.
			\end{proof}
			\begin{remark}
				\label{rem-lambda-r>0}
			From	Proposition \ref{prop-existence-sol}, we have the following remarks:
				\begin{itemize}
					\item The proof of Proposition \ref{prop-existence-sol} shows that $\lambda_r$ is positive and bounded uniformly in $r$ from above and from below. More precisely $$0<\liminf_{r\to\infty} \lambda_r\le\limsup_{r\to\infty} \lambda_r<\infty\qquad\forall\,0<a<a^*.$$
					This proves (\ref{bound-lambda}) in Theorem \ref{th-unif-bound}.
					\item In Proposition \ref{prop-existence-sol} we take $\Omega:=B_1(0)$ because we want the property $\cup_{r>r_\rho}\Omega_r=\R^3$ to be satisfied and the boundary to be Lipschitz, in order to have $\nabla u_r\cdotp y\in H^1(\Omega_r)$ for any $y\in\R^3$, as we will explain below. Moreover, this choice simplifies the forthcoming computations in the proof of Theorem \ref{theorem1.2}. In any case, Proposition \ref{prop-existence-sol} is true in any bounded convex Lipschitz domain $\Omega$ containing $0$.
					\item We note that at this level we have not proved that $u\in \mathcal{S}_a$ yet.
				\end{itemize}
			\end{remark}
			In the sequel we will be interested in considering the limit of our solutions $u_r$ as $r\to\infty$, at least when $\Omega$ is a ball. In order to do so, we will need to test equation (\ref{eq-Omega_r}) with $\nabla u_r\cdotp y$ whenever $y\in\Omega_r$ is fixed. For this reason we need to prove that the solutions constructed in Theorem \ref{th-bd-V_1} fulfill $\nabla u_r\cdotp y\in H^1(\Omega_r)$ for any $y\in\Omega_r$. This follows from the two forthcoming results.
			\begin{proposition}\label{prop-L^q}
				Let $\Omega\subset\R^3$ be a bounded convex domain. 
                Let $u\ge 0$ be a subsolution to the problem
				\begin{equation}\label{eq-bound-L^p}
					\begin{cases}
						-\Delta u
                        \le u^{p-1}\quad \text{in }\quad \Omega,\\
						u\in H^1_0(\Omega).
					\end{cases}
				\end{equation}
				with $p\in(2,6)$. Then $u\in L^{\bar{q}}(\Omega)$ for any $\bar{q}\in(6,\infty]$ and $$\|u\|_{L^{\bar{q}}(\Omega)}\le C\max\{\|u\|_6,\|u\|_6^{1+\frac{p-2}{6-p}}\},$$ 
				for some $C=C(p)>0$ depending on $p$ but not on $\Omega$ and $\bar{q}$.
			\end{proposition}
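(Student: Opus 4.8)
The plan is to run a Moser iteration while tracking all constants, so that the final bound depends neither on $\Omega$ nor on $\bar q$. The crucial structural point is that $u\in H^1_0(\Omega)$ extends by zero to an element of $D^{1,2}(\R^3)$, hence the Sobolev inequality $\|v\|_6^2\le S^{-1}\|\nabla v\|_2^2$ holds with the Aubin--Talenti constant $S$ from the Introduction, independently of $\Omega$; this is what ultimately yields the $\Omega$-independence of $C$.

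First I would fix $\beta\ge 1$ and test the subsolution inequality against the nonnegative function $\varphi:=u\,u_M^{2(\beta-1)}$, where $u_M:=\min\{u,M\}$ (here $u\ge 0$ guarantees $\varphi\ge 0$, so the subsolution inequality applies). A direct computation, using $\beta\ge 1$, gives $\int_\Omega|\nabla(u\,u_M^{\beta-1})|^2\le\beta^2\int_\Omega\nabla u\cdot\nabla\varphi$, and the subsolution inequality bounds the right-hand side by $\beta^2\int_\Omega u^p u_M^{2(\beta-1)}\le\beta^2\int_\Omega u^{2\beta+p-2}$. Applying the Sobolev inequality to $v:=u\,u_M^{\beta-1}$ and letting $M\to\infty$ by monotone convergence yields the fundamental estimate $\|u\|_{6\beta}^{2\beta}\le S^{-1}\beta^2\|u\|_{2\beta+p-2}^{2\beta+p-2}$, valid whenever $u\in L^{2\beta+p-2}(\Omega)$.

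Next I would organise the iteration by setting $\beta_{-1}:=1$ and $\beta_n:=3\beta_{n-1}-\frac{p-2}{2}$, so that $2\beta_n+p-2=6\beta_{n-1}$; starting from $u\in L^6(\Omega)$, the estimate then bootstraps to $u\in L^{6\beta_n}(\Omega)$ for every $n$, the finiteness at each stage being exactly what licenses the passage $M\to\infty$ at the next. Writing $\Phi_n:=\|u\|_{6\beta_n}$ one gets $\Phi_n\le K_n\Phi_{n-1}^{\theta_n}$ with $\theta_n=1+\frac{p-2}{2\beta_n}$ and $K_n=(S^{-1}\beta_n^2)^{1/(2\beta_n)}$. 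Solving the recursion gives $\beta_n=\frac{p-2}{4}+\frac{6-p}{4}\,3^{n+1}\to\infty$, so $6\beta_n\to\infty$ and $u\in L^{\bar q}$ for all $\bar q<\infty$. The clean point is that $\theta_n=\frac{3\beta_{n-1}}{\beta_n}$, so the exponents telescope: $\prod_{k=0}^n\theta_k=3^{n+1}/\beta_n$, which increases to $\frac{4}{6-p}=1+\frac{p-2}{6-p}$. Since $\beta_n$ grows geometrically, $\sum_n\log K_n$ converges and the accumulated prefactor $\prod_k K_k^{\prod_{j>k}\theta_j}$ converges to a finite $C=C(p)$ (as $S$ is universal). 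Iterating therefore gives $\|u\|_{6\beta_n}\le C\|u\|_6^{\prod_{k\le n}\theta_k}\le C\max\{\|u\|_6,\|u\|_6^{1+\frac{p-2}{6-p}}\}$, the maximum covering both $\|u\|_6\le 1$ and $\|u\|_6\ge 1$ since every partial exponent lies in $[1,\frac{4}{6-p}]$.

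Finally I would upgrade to $L^\infty$ and interpolate. As $\Omega$ has finite measure and $\sup_n\|u\|_{6\beta_n}<\infty$ with $6\beta_n\to\infty$, we obtain $u\in L^\infty(\Omega)$ with $\|u\|_\infty=\lim_n\|u\|_{6\beta_n}$, giving the desired bound for $\bar q=\infty$. For $\bar q\in(6,\infty)$ the elementary inequality $\|u\|_{\bar q}^{\bar q}=\int_\Omega u^6 u^{\bar q-6}\le\|u\|_\infty^{\bar q-6}\|u\|_6^6$ gives $\|u\|_{\bar q}\le\max\{\|u\|_6,\|u\|_\infty\}$, which is controlled by the same right-hand side (after enlarging $C\ge 1$ if needed), uniformly in $\bar q$. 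I expect the main difficulty to be precisely these uniformities rather than the iteration itself: the $\Omega$-independence hinges on the zero-extension and the universal constant $S$, while the sharp exponent $1+\frac{p-2}{6-p}$ and the $\bar q$-uniformity come from the telescoping of the $\theta_n$; the only mildly delicate bookkeeping is checking finiteness before each passage to the limit, which the identity $2\beta_n+p-2=6\beta_{n-1}$ arranges automatically.
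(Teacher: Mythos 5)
Your proposal is correct and follows essentially the same route as the paper's proof: a Moser iteration on truncated powers of $u$, whose independence of $\Omega$ and $\bar q$ comes exactly as you say from the zero-extension and the universal Sobolev constant $S$, with the same limiting exponent $1+\frac{p-2}{6-p}$ and the same endgame ($L^\infty$ as a limit of $L^{q}$-norms, then interpolation for intermediate $\bar q$). The only difference is bookkeeping: the paper peels off $u^{p-2}$ by H\"{o}lder against $\|u\|_6$ at each step (so the Lebesgue exponent grows by the factor $\frac{8-p}{2}$, via $\gamma_0=\frac{12}{8-p}$), whereas you absorb $u^{p-2}$ into the iteration exponents through $2\beta_n+p-2=6\beta_{n-1}$ (ratio tending to $3$); both telescopings yield the same conclusion.
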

			\begin{remark}
                The proof of        Proposition \ref{prop-L^q} is very similar to the one of \cite[Theorem 1.3]{PWZ}. In any case, here we stress that such a technique provides a bound of the $L^{\bar{q}}(\Omega)$ norm which is uniform in $\Omega$ and $\bar{q}$. Such uniformity 
				will play a crucial role in the proof of Theorem \ref{theorem1.2} when considering the limit as $r\to\infty$.
			\end{remark}
			\begin{proof}[Proof of Proposition \ref{prop-L^q}]
				First we set, for $M>0$ and $x\in\Omega$, $v_M(x):=\min\{u(x),M\}$ and we test equation (\ref{eq-bound-L^p}) with $v:=v_M^{2\chi+1}$, where $\chi>0$ will be fixed below. A direct computation shows that
				$$\int_\Omega (2\chi+1)v_M^{2\chi}|\nabla v_M|^2 dx
                \le\int_\Omega u^{p-1}v dx.$$ 
                Using the Sobolev embedding $H^1_0(\Omega)\subset L^6(\Omega)$, we can see that
				\begin{equation}\notag
					\begin{aligned}
						&\int_\Omega u^{p-1}v dx\ge \int_\Omega (2\chi+1)v_M^{2\chi}|\nabla v_M|^2 dx\\
						&=\frac{2\chi+1}{(\chi+1)^2}\int_\Omega |\nabla (v_M^{\chi+1})|^2 dx\ge S\frac{2\chi+1}{(\chi+1)^2}\|v_M\|^{2(\chi+1)}_{6(\chi+1)}.
					\end{aligned}
				\end{equation}
				On the other hand, by the definition of $v_M$ and the Holder inequality, one have
				\begin{equation}\notag
					\int_\Omega u^{p-1}v dx\le\int_\Omega u^{p-2}u^{2(\chi+1)} dx\le \|u\|_6^{p-2}\|u\|^{2(\chi+1)}_{\gamma_0(\chi+1)},
				\end{equation}
				where we have set $\gamma_0:=\frac{12}{8-p}\in(2,6)$ since $p\in(2,6)$. As a consequence, we have the estimate
				$$\|v_M\|_{6(\chi+1)}\le \left(\frac{\chi+1}{(S(2\chi+1))^{1/2}}\right)^{\frac{1}{\chi+1}}\|u\|_6^{\frac{p-2}{2(\chi+1)}}\|u\|_{\gamma_0(\chi+1)}\qquad\forall\,M>0,\,\chi>0.$$
				Using the fact that $u\ge 0$, $v_M\to u$ as $M\to\infty$ point-wise in $\Omega$ and the Fatou lemma,  we have as $M\to\infty$,
				\begin{equation}\label{est-u-chi}
					\|u\|_{6(\chi+1)}\le \left(\frac{\chi+1}{(S(2\chi+1))^{1/2}}\right)^{\frac{1}{\chi+1}}\|u\|_6^{\frac{p-2}{2(\chi+1)}}\|u\|_{\gamma_0(\chi+1)}\qquad\forall\,\chi>0. 
				\end{equation}
				Applying (\ref{est-u-chi}) with $\chi=\chi_1>0$ such that $1+\chi_1=\gamma_0/6$ we have
				$$\|u\|_{6(\chi_1+1)}\le \left(\frac{\chi_1+1}{(S(2\chi_1+1))^{1/2}}\right)^{\frac{1}{\chi_1+1}}\|u\|_6^{1+\frac{p-2}{2(\chi_1+1)}}.$$
				By induction, for any $n>0$ we can choose $\chi_n>0$ such that $1+\chi_n=(6/\gamma_0)^n$, so that, by \eqref{est-u-chi}
				$$\|u\|_{6(\chi_n+1)}\le S^{-\frac{1}{2}\sum_{k=1}^n\frac{1}{\chi_k+1}}\prod_{k=1}^n\left(\frac{\chi_k+1}{(2\chi_k+1)^{1/2}}\right)^{\frac{1}{\chi_k+1}}\|u\|_6^{1+\frac{p-2}{2}\left(\sum_{k=1}^n\frac{1}{\chi_k+1}\right)},$$
				for any $n\ge 1$. We note that $\chi_n\to\infty$. Using the properties of the function $\varphi(y):=\left(\frac{y+1}{(2y+1)^{1/2}}\right)^{\frac{1}{\sqrt{y+1}}}$ and the properties of the geometric series we conclude that
				$$\|u\|_{6(\chi_n+1)}\le c\|u\|_6^{1+\frac{p-2}{6-p}(1-(\gamma_0/6)^n)}\qquad\forall\, n\ge 1,$$
				for some constant $c=c(p)>0$ depending on $p$ but not on $\Omega$. We refer to the proof of \cite[Theorem 1.3]{PWZ} for the details.\\
				
				Taking $\bar{q}>6$, $n\ge 1$ such that $6(\chi_n+1)>\bar{q}$ and $\alpha\in(0,1)$ defined by $$\frac{1}{\bar{q}}=\frac{\alpha}{6}+\frac{1-\alpha}{6(\chi_n+1)},$$  
				an interpolation inequality gives 
				$$\|u\|_{\bar{q}}\le \|u\|_6^\alpha\|u\|^{1-\alpha}_{6(\chi_n+1)} \le c\|u\|_6^{1+\frac{p-2}{6-p}(1-(\gamma_0/6)^n)(1-\alpha)}\le c\max\{\|u\|_6,\|u\|_6^{1+\frac{p-2}{6-p}}\}=:g(\|u\|_6),$$
				where $c>0$ is independent of $q$ and $\Omega$. This concludes the proof for $\bar{q}\in[6,\infty)$.\\
				
				In orded to treat the case $q=\infty$, we note that the function $v(x):=\min\{u(x),g(\|u\|_6)+1\}$ is bounded in $\Omega$ and satisfies $\|v\|_{\bar{q}}\le\|u\|_{\bar{q}}\le g(\|u\|_6)$ for any $\bar{q}\ge 6$, hence
				$$\|v\|_\infty=\lim_{\bar{q}\to\infty}\|v\|_{\bar{q}}\le g(\|u\|_6).$$
				This yields that $v(x)=\min\{u(x),g(\|u\|_6)+1\}\le g(\|u\|_6)$, hence $v=u$, so in particular $u\in L^\infty(\Omega)$ and $\|u\|_\infty\le g(\|u\|_6)$.
			\end{proof}
			\begin{remark}\label{rem-bound-u-infty}
				In particular, since $\liminf_{r\to\infty}\lambda_r>0$ (see Remark \ref{rem-lambda-r>0}), Proposition \ref{prop-L^q} shows that the solutions $(\lambda_r,u_r)$ constructed in Theorem \ref{th-bd-V_1} are bounded for $r$ large enough and fulfill $$\sup_{\bar{q}\in[6,\infty]}\sup_{r\geq r_a}\|u_r\|_{\bar{q}}<\infty,$$
				which proves (\ref{bound-u-r-infty}) in Theorem \ref{th-unif-bound}. In fact, due to the uniform bound in $H^1(\Omega_r)$ given by (\ref{bound-u_r1}) and the Sobolev embedding $H^1(\R^3)\subset L^6(\R^3)$, we have
				$$\|u_r\|_{\bar{q}}\le C(p)\max\{\|u_r\|_6,\|u_r\|_6^{1+\frac{p-2}{6-p}}\}\le C(p)\max\{\|u_r\|_{H^1(\Omega_r)},\|u_r\|_{H^1(\Omega_r)}^{1+\frac{p-2}{6-p}}\}\le M_{a,p},\qquad\forall\,\bar{q}\in[6,\infty],$$
				for some constant $M_{a,p}>0$ depending on $a$ and $p$ only.
			\end{remark}
            In particular, the uniform bound of $u_r$ given by Remark \ref{rem-bound-u-infty} gives a uniform bound for $\phi_{u_r}$ and $\nabla\phi_{u_r}$ in $L^\infty(\Omega_r)$.
			\begin{lemma}\label{lemma-est-nabla-phi}
				Assume that $\partial\Omega$ is Lipschitz. Then for any $a\in(0,a^*)$, where $a^*$ is given in Proposition \ref{prop-existence-sol}, there exists a constant $M_{a}>0$ such that for $r\geq r_a$, we have $\phi_{u_r}\in W^{1,\infty}(\R^3)\cap H^2_{loc}(\R^3)$ and
				\begin{equation}
					\label{est-phi-infty}
					\|\nabla\phi_{u_r}\|_\infty+\|\phi_{u_r}\|_\infty\le M_a
				\end{equation}
			\end{lemma}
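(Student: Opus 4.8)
The plan is to treat $\phi_{u_r}$ as the Newtonian potential of the density $u_r^2$ and to exploit the uniform $L^q$ bounds on $u_r$ provided by Remark \ref{rem-bound-u-infty}. Combining that remark with the mass constraint $\|u_r\|_2=a$ and interpolation, one obtains a constant $\tilde{M}_a>0$, independent of $r\ge r_a$, such that $\|u_r\|_q\le \tilde{M}_a$ for every $q\in[2,\infty]$. All the estimates below are uniform in $r$ precisely because these $L^q$ bounds are, so the Lipschitz regularity of $\partial\Omega$ plays no quantitative role and is recorded only because it is used elsewhere.

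First I would bound $\|\phi_{u_r}\|_\infty$. For any $x\in\R^3$ I would split
\begin{equation*}
\phi_{u_r}(x)=\int_{B_1(x)}\frac{u_r^2(y)}{|x-y|}\,dy+\int_{\R^3\setminus B_1(x)}\frac{u_r^2(y)}{|x-y|}\,dy.
\end{equation*}
On the far region $|x-y|\ge 1$ the kernel is at most $1$, so that term is bounded by $\|u_r\|_2^2=a^2$. On the near region I would apply the Cauchy--Schwarz inequality to get the bound $\|u_r\|_4^2\big(\int_{B_1(0)}|z|^{-2}dz\big)^{1/2}$; since $|z|^{-2}\in L^1(B_1(0))$ in dimension three and $\|u_r\|_4\le\tilde{M}_a$, this is controlled uniformly in $r$ and $x$.

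For the gradient I would use that, since $u_r^2\in L^1(\R^3)\cap L^\infty(\R^3)$, the Newtonian potential is differentiable with
\begin{equation*}
\nabla\phi_{u_r}(x)=-\int_{\R^3}u_r^2(y)\,\frac{x-y}{|x-y|^3}\,dy,\qquad\text{so}\qquad|\nabla\phi_{u_r}(x)|\le\int_{\R^3}\frac{u_r^2(y)}{|x-y|^2}\,dy,
\end{equation*}
the differentiation under the integral sign being justified by the local integrability of $|z|^{-2}$ near the origin in $\R^3$. Splitting again into $B_1(x)$ and its complement, the far part is at most $a^2$, while on $B_1(x)$ I would apply H\"older with exponents $s=4$ and $s'=4/3$, yielding the bound $\|u_r\|_8^2\big(\int_{B_1(0)}|z|^{-8/3}dz\big)^{3/4}$; since $8/3<3$ the integral $\int_{B_1(0)}|z|^{-8/3}dz$ is finite, whence $\|\nabla\phi_{u_r}\|_\infty$ is bounded uniformly in $r$. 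Together these give $\phi_{u_r}\in W^{1,\infty}(\R^3)$ and the bound $\|\nabla\phi_{u_r}\|_\infty+\|\phi_{u_r}\|_\infty\le M_a$ asserted in \eqref{est-phi-infty}.

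Finally, for the local regularity I would invoke the defining equation $-\Delta\phi_{u_r}=u_r^2$ in $\R^3$. Since $u_r^2\in L^\infty(\R^3)\subset L^2_{loc}(\R^3)$, uniformly in $r$, interior elliptic $W^{2,2}$ estimates give $\phi_{u_r}\in H^2_{loc}(\R^3)$. I do not expect a serious obstacle here: the only delicate points are the integrability of the singular kernels $|z|^{-1}$ and $|z|^{-2}$ near the origin in dimension three (both integrable, which is exactly what dictates the choice of H\"older exponents above) and the uniformity in $r$, which is guaranteed by the uniform $L^q$ control of $u_r$ furnished by Remark \ref{rem-bound-u-infty}.
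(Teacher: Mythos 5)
Your proof is correct, but it takes a genuinely different route from the paper. The paper's proof stays entirely within elliptic regularity theory: it applies the interior Calder\'on--Zygmund estimate (Theorem 9.11 of Gilbarg--Trudinger) on unit balls $B_1(x)\subset B_2(x)$ to get
$\|\phi_{u_r}\|_{W^{2,6}(B_1(x))}\le c\left(\|\phi_{u_r}\|_{L^6(B_2(x))}+\|u_r\|^2_{L^{12}(B_2(x))}\right)$,
controls $\|\phi_{u_r}\|_6$ by $\|\nabla\phi_{u_r}\|_2$ via Sobolev and the energy identity $\|\nabla\phi_{u_r}\|^2_2\le S^{-1/2}\|\nabla\phi_{u_r}\|_2\|u_r\|^2_{12/5}$ obtained by testing the Poisson equation, and then reads off both conclusions at once from the embeddings $W^{2,6}(B_1(x))\subset C^{1,1/2}(B_1(x))$ (uniformly in the center $x$, giving \eqref{est-phi-infty}) and $W^{2,6}_{loc}\subset H^2_{loc}$. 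You instead estimate the Newtonian potential and its gradient directly by splitting the kernel into $B_1(x)$ and its complement and applying H\"older, which is more elementary (no Calder\'on--Zygmund theory for the $W^{1,\infty}$ part) and produces explicit constants depending only on $a$ and the uniform $L^q$ bounds; the price is that you must separately justify differentiation under the integral sign (standard for densities in $L^1\cap L^\infty$, which $u_r^2$ is) and then run a separate interior $W^{2,2}$ estimate for the $H^2_{loc}$ claim, whereas the paper gets slightly more regularity ($W^{2,6}_{loc}$, hence $C^{1,1/2}_{loc}$) for free. Both arguments rest on exactly the same crucial input, namely the $r$-uniform $L^q$ bounds of Proposition \ref{prop-L^q} and Remark \ref{rem-bound-u-infty}; and in fact your kernel-splitting device is the same one the paper itself deploys later, in the proof of Theorem \ref{theorem1.2}, to establish the decay of $\nabla\phi_{u_n}$ on $\partial B_{\delta|y^1_n|}(y^1_n)$, so your route is fully consistent with the paper's toolkit. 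Your side remark that the Lipschitz regularity of $\partial\Omega$ plays no quantitative role here is also accurate: the paper's estimates are purely interior as well.
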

			\begin{proof}
				The proof of (\ref{est-phi-infty}) relies on the elliptic estimates. In fact, for any $x\in\R^3$ we have $\phi_{u_r}\in W^{2,6}(B_1(x))$ (see Theorem $9.11$ of \cite{Tru}) and
				\begin{equation}\notag
					\begin{aligned} 
						\|\phi_{u_r}\|_{W^{2,6}(B_1(x))}&\le c(\|\phi_{u_r}\|_{L^6(B_2(x))}+\|\Delta\phi_{u_r}\|_{L^6(B_2(x))})=c(\|\nabla\phi_{u_r}\|_{L^2(\R^3)}+\|u_r\|^2_{L^{12}(B_2(x))})\\
						&\le c(\|u_r\|^2_{12/5}+\|u_r\|^2_{12})\le  c(\|\nabla u_r\|^2_{2}+\max\{\|u_r\|_6,\|u_r\|_6^{1+\frac{p-2}{6-p}}\})\le M_a
					\end{aligned}
				\end{equation}
				thanks to Propositions \ref{prop-L^q}, the uniform bound given by (\ref{bound-u_r1}) and the fact that
                $$\|\nabla\phi_{u_r}\|^2_2\le\|\phi_{u_r}\|_6\|u_r\|^2_{12/5}\le S^{-1/2}\|\nabla\phi_{u_r}\|_2\|u_r\|^2_{12/5}.$$
                The result follows from the Sobolev embedding $W^{2,6}(B_1(x))\subset C^{1,\frac{1}{2}}(B_1(x))$ and the fact that $W^{2,6}_{loc}(\R^3)\subset H^2_{loc}(\R^3)$.\\
				
			\end{proof}
        We note that the estimates provided by Lemma \ref{lemma-est-nabla-phi} are uniform in $r$, since the bound of the $L^q(\Omega_r)$-norm of $u_r$ is uniform in $r$. This will be crucial in the proof of Theorem \ref{theorem1.2}.\\
            
	Now we apply 
        Proposition \ref{prop-L^q} to prove the regularity of $u_r$ constructed in Theorem \ref{th-bd-V_1}  if the domain $\Omega$ is Lipschitz. 
			\begin{proposition}\label{prop-L-infty}
				Assume that $\Omega$ is a convex bounded Lipschitz domain and the hypothesis of Theorem \ref{th-bd-V_1} are satisfied. Then the solutions $u_r$ constructed in Theorem \ref{th-bd-V_1}  are in $C^{1}(\bar{\Omega}_r)\cap W^{2,2}(\Omega_r)$.
			\end{proposition}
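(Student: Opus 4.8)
The plan is to read the equation satisfied by $u_r$ as a linear Poisson problem with bounded right-hand side and then invoke elliptic regularity theory, using convexity of $\Omega$ in an essential way for the behaviour up to the boundary. First I would rewrite \eqref{eq-Omega_r} in the form
\[
-\Delta u_r=f_r\quad\text{in}~\Omega_r,\qquad u_r\in H^1_0(\Omega_r),
\]
where $f_r:=|u_r|^{p-2}u_r-\big(V+\lambda_r+\phi_{u_r}\big)u_r$. The first task is to show $f_r\in L^\infty(\Omega_r)$. Indeed, Proposition \ref{prop-L^q} gives $u_r\in L^\infty(\Omega_r)$ (with a bound independent of $r$), Lemma \ref{lemma-est-nabla-phi} gives $\phi_{u_r}\in L^\infty(\Omega_r)$, and $(V_1)$ guarantees $V\in L^\infty(\R^3)$, since a continuous function vanishing at infinity is bounded. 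As $u_r\in L^\infty$, every term of $f_r$ is a product of $L^\infty$ functions, whence $f_r\in L^\infty(\Omega_r)\subset\bigcap_{q<\infty}L^q(\Omega_r)$.

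Next I would establish the $W^{2,2}$ regularity. Because $\Omega_r=r\Omega$ is convex, the Dirichlet Laplacian on $\Omega_r$ is $H^2$-regular: for $f_r\in L^2(\Omega_r)$ the weak solution of the displayed problem belongs to $W^{2,2}(\Omega_r)\cap H^1_0(\Omega_r)$, together with the a priori bound $\|D^2u_r\|_{L^2}\le\|\Delta u_r\|_{L^2}$. This is the classical Kadlec--Grisvard estimate, where convexity enters through the nonnegativity of the (distributional) second fundamental form of $\partial\Omega_r$. Since $u_r$ is precisely the weak solution and $f_r\in L^2(\Omega_r)$, this yields $u_r\in W^{2,2}(\Omega_r)$.

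For the $C^1(\bar{\Omega}_r)$ statement I would argue in two parts. In the interior, $f_r\in L^q_{loc}(\Omega_r)$ for every $q<\infty$, so the interior Calder\'on--Zygmund estimate (Theorem 9.11 of \cite{Tru}) gives $u_r\in W^{2,q}_{loc}(\Omega_r)$ and hence $u_r\in C^{1,\alpha}_{loc}(\Omega_r)$ by Sobolev embedding. Up to the boundary, I would again exploit convexity: on a bounded convex domain the solution of $-\Delta u=f$ with $f\in L^q$ lies in $W^{2,q}(\Omega_r)$ for some $q>3$, the point being that at any boundary singularity (edge or vertex) of a convex set the leading singular exponent exceeds $1$, which is exactly the threshold making the second derivatives $q$-integrable for a range of $q>3$. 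The embedding $W^{2,q}(\Omega_r)\hookrightarrow C^1(\bar{\Omega}_r)$ for $q>3$ in dimension three then gives $u_r\in C^1(\bar{\Omega}_r)$, completing the proof.

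I expect the main obstacle to be the global $C^1$ regularity up to a merely Lipschitz boundary: this is precisely where convexity is indispensable, since for a general Lipschitz (or even nonconvex polyhedral) domain corner singularities with exponent less than $1$ would preclude $C^1$ regularity. Some care is therefore needed to invoke the convex-domain regularity theory; fortunately, in the application of Proposition \ref{prop-existence-sol} one takes $\Omega=B_1(0)$, whose boundary is smooth, so there the standard $W^{2,q}$ boundary estimates apply directly and the convexity refinement is only needed for the general statement.
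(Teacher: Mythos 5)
Your reduction of \eqref{eq-Omega_r} to the linear problem $-\Delta u_r=f_r$ with $f_r:=|u_r|^{p-2}u_r-(V+\lambda_r+\phi_{u_r})u_r\in L^\infty(\Omega_r)$, and your treatment of the $W^{2,2}(\Omega_r)$ part via the Kadlec--Grisvard theorem on convex domains, are correct, and this is essentially the paper's strategy: the paper likewise observes that $\Delta u_r\in L^{\bar p}(\Omega_r)$ for every $\bar p<\infty$ using Proposition \ref{prop-L^q}, Lemma \ref{lemma-est-nabla-phi} and $(V_1)$, and then concludes by elliptic estimates plus Sobolev embedding. Your $H^2$ step is in fact better justified than the paper's, which obtains $W^{2,2}$ only as a corollary of a global $W^{2,\bar p}$ claim.

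However, your argument for the $C^1(\bar\Omega_r)$ part has a genuine gap: it is not true that on a bounded convex domain the Dirichlet problem with $L^q$ right-hand side produces solutions in $W^{2,q'}$ for some $q'>3$. The quantitatively correct statement is this: if the leading singular exponent at an edge or vertex is $\alpha$, the model singular solution behaves like $r^\alpha$ and its second derivatives like $r^{\alpha-2}$, which near an edge lie in $L^{q}$ only for $q<\frac{2}{2-\alpha}$. Convexity guarantees $\alpha>1$, hence only $\frac{2}{2-\alpha}>2$: one gets $W^{2,q}$ for some $q>2$, not for some $q>3$. Concretely, a convex polyhedron with a dihedral angle $\omega\in\left(\frac{3\pi}{4},\pi\right)$ has $\alpha=\frac{\pi}{\omega}\in\left(1,\frac{4}{3}\right)$ and $\frac{2}{2-\alpha}<3$, so for generic data the solution is not even in $W^{2,3}$, and the embedding $W^{2,q'}\hookrightarrow C^1(\bar\Omega)$ for $q'>3$ is never reached. (Continuity of the gradient up to the boundary may nevertheless be true, since the singular profiles with $\alpha>1$ are themselves $C^1$ with vanishing gradient at the singular set, but that requires a corner-asymptotics decomposition you have not given and which is unavailable on a general convex Lipschitz domain.) To be fair, the paper's own proof is no more complete at this point: it asserts global $W^{2,\bar p}(\Omega_r)$ regularity ``since $\Omega$ is Lipschitz'', whereas the boundary estimate of Theorem 9.13 in \cite{Tru} requires a $C^{1,1}$ boundary, and $W^{2,\bar p}$ with $\bar p>2$ can fail on convex Lipschitz domains as above. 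Both arguments are airtight only when $\partial\Omega$ is smooth, in particular for $\Omega=B_1(0)$, which is the only case used later (Proposition \ref{prop-existence-sol} and the proof of Theorem \ref{theorem1.2}); your closing remark correctly identifies this rescue, but as a proof of the Proposition as stated, the boundary regularity step is missing.
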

			\begin{proof}
				Let $\bar{p}\in(3,\infty)$. Due to Proposition \ref{prop-L^q} and the fact that $V\in L^\infty(\R^3)$ and $\phi_{u_r}\in L^\infty(\R^3)$, we can see that $\Delta u_r\in L^{\bar{p}}(\Omega_r)$. Therefore, since $\Omega$ Lipschitz and $u_r\in H^1_0(\Omega_r)$, the elliptic estimates give that $u_r\in W^{2,\bar{p}}(\Omega_r)$. Thus using the Sobolev embedding  $W^{2,\bar{p}}(\Omega_r)\subset C^{1,1-\frac{3}{\bar{p}}}(\Omega_r)$ and the fact that $W^{2,\bar{p}}(\Omega_r)\subset W^{2,2}(\Omega_r)$, we conclude the proof. 
			\end{proof}
			\begin{remark}\label{rem-test-function}
				Proposition \ref{prop-L-infty} shows that, if $\Omega$ is Lipschitz, then the solution constructed in Theorem \ref{th-bd-V_1} fulfills $\nabla u_r\cdotp y\in H^1(\Omega_r)$ for any $y\in\Omega_r$, hence it can be used as a test function (see below). 
			\end{remark}

			\begin{lemma}\label{lemma-PS}
Let $\Omega:=B_1(0)$, $a\in(0,a^*)$ and the sequence $\{u_{r_n}\}$ be constructed in Proposition \ref{prop-existence-sol}. Then $\{u_{r_n}\}$ is a bounded $(PS)$ sequence of $E_{\infty,1}$ constraint to $\mathcal{S}_a$.
			\end{lemma}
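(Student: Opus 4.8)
The plan is to verify the two defining properties of a bounded constrained $(PS)$ sequence: boundedness in $H^1(\R^3)$ with bounded energy, which is essentially already in hand, and, the substantial point, that the norm of the differential of $E_{\infty,1}=J_V$ restricted to $\mathcal{S}_a$ tends to zero. Boundedness in $H^1(\R^3)$ is exactly \eqref{bound-u_r1}. Since each $u_{r_n}$ is extended by zero outside $\Omega_{r_n}=B_{r_n}(0)$, one has $E_{\infty,1}(u_{r_n})=E_{r_n,1}(u_{r_n})=m_{r_n,1}(a)\in[c_a,(1+\theta)c_a)$, so the energy is bounded, and clearly $u_{r_n}\in\mathcal{S}_a$. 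It then remains to estimate $J_V'(u_{r_n})[\varphi]$ for $\varphi\in H^1(\R^3)$ in the tangent space $\{\varphi:\int_{\R^3}u_{r_n}\varphi\,dx=0\}$ with $\|\varphi\|\le 1$.

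First I would write $J_V'(u_{r_n})[\varphi]$ explicitly and integrate by parts. By Proposition \ref{prop-L-infty}, $u_{r_n}\in C^1(\overline{\Omega}_{r_n})\cap W^{2,2}(\Omega_{r_n})$ is a strong solution, so testing the equation against $\varphi$ (whose restriction to $\Omega_{r_n}$ need not vanish on $\partial\Omega_{r_n}$) and using that the gradient of the zero-extension vanishes outside $\Omega_{r_n}$ yields
$$J_V'(u_{r_n})[\varphi]=-\lambda_{r_n}\int_{\R^3}u_{r_n}\varphi\,dx+\int_{\partial\Omega_{r_n}}\partial_\nu u_{r_n}\,\varphi\,d\sigma.$$
For $\varphi$ in the tangent space the first term vanishes, so the constrained differential reduces to the boundary integral, and Cauchy--Schwarz gives
$$|J_V'(u_{r_n})[\varphi]|\le\left(\int_{\partial\Omega_{r_n}}|\partial_\nu u_{r_n}|^2\,d\sigma\right)^{1/2}\left(\int_{\partial\Omega_{r_n}}\varphi^2\,d\sigma\right)^{1/2}.$$

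The heart of the matter is to show that $\int_{\partial B_{r_n}}|\partial_\nu u_{r_n}|^2\,d\sigma\to 0$, and here I would exploit that $\Omega$ is a ball through a Rellich--Pohozaev identity. Since $x\cdot\nabla u_{r_n}\in H^1(\Omega_{r_n})$ is admissible (Remark \ref{rem-test-function}), multiplying the equation by $x\cdot\nabla u_{r_n}$ and integrating over $B_{r_n}(0)$ — where the integrand vanishes almost everywhere — and integrating by parts produces an identity whose only boundary contribution is $-\tfrac{r_n}{2}\int_{\partial B_{r_n}}|\partial_\nu u_{r_n}|^2\,d\sigma$, because $x\cdot\nu=r_n$ and $\nabla u_{r_n}=(\partial_\nu u_{r_n})\nu$ on $\partial B_{r_n}$ (recall $u_{r_n}=0$ there). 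The remaining bulk terms are $\|\nabla u_{r_n}\|_2^2$, $\lambda_{r_n}a^2$, $\int V u_{r_n}^2$, $\int W u_{r_n}^2$, $B(u_{r_n})$ and $\int|u_{r_n}|^p$, each bounded uniformly in $n$ by \eqref{bound-u_r1}, \eqref{est-Br-black}, Remarks \ref{rem-lambda-r>0} and \ref{rem-bound-u-infty}, and $(V_1)$ (for the $L^p$ term using $\int|u_{r_n}|^p\le\|u_{r_n}\|_\infty^{p-2}a^2$). Hence $\tfrac{r_n}{2}\int_{\partial B_{r_n}}|\partial_\nu u_{r_n}|^2\,d\sigma\le C$ uniformly, so $\int_{\partial B_{r_n}}|\partial_\nu u_{r_n}|^2\,d\sigma\le C/r_n\to 0$.

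Finally I would bound the trace of the test function uniformly in the radius: for every $\varphi\in H^1(\R^3)$ a radial fundamental-theorem-of-calculus argument gives $\int_{\partial B_r}\varphi^2\,d\sigma\le 2\int_{\R^3\setminus B_r}|\varphi|\,|\nabla\varphi|\,dx\le\|\varphi\|_{H^1(\R^3)}^2$, with a constant independent of $r$. Combining the two estimates, $\sup_{\|\varphi\|\le 1}|J_V'(u_{r_n})[\varphi]|\le(C/r_n)^{1/2}\to 0$ over the tangent space, which is precisely the assertion that $\{u_{r_n}\}$ is a $(PS)$ sequence of $J_V$ on $\mathcal{S}_a$. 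I expect the main obstacle to be the control of the normal derivative on the expanding boundary: the natural weak formulation only pairs against $H^1_0(\Omega_{r_n})$, and it is the interplay of the ball geometry (yielding the clean factor $r_n$ in the Rellich identity) with the uniform interior bounds and the $r$-independent trace inequality that makes the boundary term vanish. The nonnegativity of $V$ and the positivity of the nonlocal term cause no difficulty here, since all their contributions are absorbed by the already-established uniform bounds.
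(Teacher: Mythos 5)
Your proposal is correct, and it proves the lemma by a genuinely different mechanism than the paper. The paper's proof is soft: since $u_{r_n}$ solves the equation in $B_{r_n}(0)$, the constrained gradient $\nabla_{\mathcal{S}_a}E_{\infty,1}(u_{r_n})$ annihilates every test function compactly supported in $B_{r_n}(0)$; combining this with the uniform $H^{-1}(\R^3)$-bound of the constrained gradient (coming from the uniform $H^1$, $L^\infty$ and multiplier bounds) and the density of $C^\infty_c(\R^3)$ in $H^1(\R^3)$, the paper concludes by approximation, with no boundary analysis at all. Your proof is hard and quantitative: you use the $W^{2,2}(\Omega_{r_n})\cap C^1(\bar{\Omega}_{r_n})$ regularity of Proposition \ref{prop-L-infty} to rewrite the constrained derivative as the single boundary flux term $\int_{\partial B_{r_n}}\partial_\nu u_{r_n}\,\varphi\, d\sigma$, you control $\int_{\partial B_{r_n}}|\partial_\nu u_{r_n}|^2 d\sigma\le C/r_n$ by a Rellich--Pohozaev identity on the ball (all bulk terms, including the nonlocal one, which symmetrizes to a multiple of $B(u_{r_n})$, are uniformly bounded thanks to \eqref{bound-u_r1}, \eqref{est-Br-black} and Remarks \ref{rem-lambda-r>0} and \ref{rem-bound-u-infty}), and you close with a trace inequality on $\partial B_{r_n}$ whose constant is independent of $r_n$. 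What your route buys is an explicit rate $O(r_n^{-1/2})$ and, more importantly, genuine dual-norm convergence, i.e.\ smallness of $|J_V'(u_{r_n})[\varphi]|$ uniformly over the unit ball of the tangent space: in the paper's approximation step the radius is chosen \emph{after} the approximating function $\varphi$ (hence after $v$), so as written it yields vanishing of $\nabla_{\mathcal{S}_a}E_{\infty,1}(u_{r_n})[v]$ for each fixed $v$, and your boundary estimate supplies exactly the uniformity needed to upgrade this testwise statement to the norm statement that the (PS) property requires. The price is that your argument leans on the ball geometry (through $x\cdot\nu=r_n$ in the Rellich identity) and on the full regularity and uniform-bound package (Propositions \ref{prop-L^q} and \ref{prop-L-infty}, Remark \ref{rem-test-function}); since the paper establishes all of these before the lemma, nothing in your argument is circular.
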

			\begin{proof}
				Let $(\lambda_{r,s},u_{r,s})$ be the solution of Problem (\ref{eq-Omegar-s}) obtained in Proposition \ref{prop-existence-Omega-r}. Due to Lemma \ref{lemma-MP-geom}, one has as $r\to\infty$,
                $$E_{r,1}(u_r)= E_{\infty,1}(u_r)+o_r(1)=m_{r,1}(a)+o_r(1)\in[c_a+1,\tilde{c}_a+1].$$
		We claim $\nabla_{\mathcal{S}_a} E_{\infty,1}(u_r)=o_r(1)$ as $r\to\infty$ in $H^{-1}(\R^3)$. In fact, taking a compactly supported test function $\varphi\in C^\infty_c(\R^3)$, we have $$\nabla _{\mathcal{S}_a}E_{\infty,1}(u_r)[\varphi]=\nabla _{\mathcal{S}_a}E_{r,1}(u_r)[\varphi]=0$$
				if $r$ is so large that ${\rm supp}(\varphi)\subset B_r(0)$, since $u_r$ is a solution to equation (\ref{eq-Omega_r}) in $B_r(0)$. If $v\in H^1(\R^3)$, then for any $\epsilon>0$ there exists $\varphi\in C^\infty_c(\R^3)$ such that $\|v-\varphi\|_{H^1(\R^3)}<\epsilon$. Then, taking $r>0$ so large that ${\rm supp}(\varphi)\subset B_r(0)$, we have
				$$|\nabla _{\mathcal{S}_a}E_{\infty,1}(u_r)[v]|=|\nabla _{\mathcal{S}_a}E_{\infty,1}(u_r)[v-\varphi]|\le \tilde{C}_a\epsilon,$$
				for some $\tilde{C}_a>0$, here we use the fact that $\nabla_{\mathcal{S}_a} E_{\infty,1}(u_r)$ is bounded in $H^{-1}(\R^3)$ uniformly in $r$ (indeed, for $r$ large enough, it holds
				\begin{equation}\notag
					\begin{aligned}
						|\nabla_{\mathcal{S}_a} E_{\infty,1}(u_r)[v]|&=\bigg|\int_{B_r(0)}\nabla u_r\cdotp\nabla v dx+\int_{B_r(0)}(V(x)+\lambda_r)u_r v dx\\
						&+\int_{B_r(0)}\phi_{u_r}u_r  v dx-\int_{B_r(0)}u_r^{p-1}v dx\bigg|\\
						&\le\|\nabla u_r\|_2\|\nabla v\|_2+\left(\|V\|_\infty+2\limsup_{r\to\infty}\lambda_r\right)a\|v\|_2\\
						&+c\|u_r\|^3_{H^1(\R^3)}\|v\|_{H^1(\R^3)}+\|u_r\|_6^{p-1}\|v\|_2
					\end{aligned}
				\end{equation}
				and $(\lambda_r,u_r)$ is bounded in $\R\times H^1(\R^3)$ uniformly in $r$ (see Proposition \ref{prop-existence-sol})).
			\end{proof}
			Now we state the following lemma, which will be used to analyze the behavior of the $(PS)$ sequences as $n\to\infty$. We recall that, in our notations, we have 
			$$J_{V,\lambda}(u)=E_{\infty,1}(u)+\frac{\lambda}{2}\int_{\R^3}u^2 dx,\quad I_{\lambda}(u)=\bar{E}_{\infty,1}(u)+\frac{\lambda}{2}\int_{\R^3}u^2 dx\quad\forall\quad u\in H^1(\R^3).$$
			\begin{lemma}[Splitting Lemma]\label{lem2.2}
				Let $V\in L^{\bar{q}}(\R^3)$, for some $\bar{q}\in\left[\frac{3}{2},\infty\right]$. Assume furthermore that $V^-:=-\min\{V,0\}\in L^{\frac{3}{2}}(\R^3)$ with $\|V^-\|_{\frac{3}{2}}<S$ and $\lim_{|x|\to\infty}V(x)=0$. Let $\{u_n\}\subset H^1(\mathbb{R}^3)$ be a bounded $(PS)$ sequence for $J_{V,\lambda}$ such that $u_n\rightharpoonup u$ in $H^1(\mathbb{R}^3)$. Then there exists an integer $k\geq0$, $k$ non-trivial solutions $w^1,\cdots,w^k\in H^1(\mathbb{R}^3)$ to the limit equation
				\begin{equation}\label{lim-eq}
					-\Delta u+\lambda u+(|x|^{-1}\ast |u|^2) u=|u|^{p-2}u\quad\text{in}\quad\R^3
				\end{equation}
				and $k$ sequences $\{y_n^j\}_n\subset \mathbb{R}^3,1\leq j\leq k$ such that $|y_n^j|\rightarrow\infty$ as $n\rightarrow\infty$ and
				\begin{equation}\label{eq2.1}
					u_n=u+\sum_{j=1}^kw^j(\cdot-y_n^j )+o(1)~\text{strongly~in}~H^1(\mathbb{R}^3).
				\end{equation}
				Furthermore, one has
				\begin{equation}\label{2.2}
					\|u_n\|_2^2=\|u\|_2^2+\sum_{j=1}^k\|w^j\|_2^2+o(1)
				\end{equation}
				and
				\begin{equation}\label{2.3}
					J_{V,\lambda}(u_n)= J_{V,\lambda}(u)+\sum_{j=1}^kI_{\lambda}(w^j)+o(1)
				\end{equation}
			\end{lemma}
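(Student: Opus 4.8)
The plan is to prove the lemma by the classical iterative \emph{bubbling} procedure, extracting one nontrivial profile of the limit equation \eqref{lim-eq} at a time. First I would observe that, since $J_{V,\lambda}$ is $C^1$ and $u_n\rightharpoonup u$, passing to the weak limit in $J'_{V,\lambda}(u_n)\to 0$ shows that $u$ itself is a weak solution of $-\Delta u+V(x)u+\lambda u+\phi_u u=|u|^{p-2}u$; the only delicate point here is the weak continuity of the nonlocal term $\phi_{u_n}u_n$, which follows from the Hardy--Littlewood--Sobolev inequality together with the local compactness of the embedding $H^1(\R^3)\hookrightarrow L^q_{\loc}(\R^3)$ for $q\in[2,6)$. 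The three analytic tools I would rely on are: (i) the Brezis--Lieb lemma for the local terms $\int_{\R^3}|u|^p\,dx$ and $\|\nabla u\|_2^2$; (ii) a nonlocal analogue giving $B(u_n)=B(u)+B(u_n-u)+o(1)$; and (iii) Lions' vanishing lemma, which converts non-vanishing of the $L^2$ mass into the existence of concentration centers.

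Next comes the inductive step. Setting $v_n^1:=u_n-u\rightharpoonup 0$, I would show, using the splitting identities above together with the hypotheses $V\in L^{\bar q}(\R^3)$ and $\lim_{|x|\to\infty}V(x)=0$, that $\int_{\R^3}V(v_n^1)^2\,dx\to 0$, and hence that $v_n^1$ is a bounded $(PS)$ sequence for the translation-invariant limit functional $I_\lambda$, with $J_{V,\lambda}(u_n)=J_{V,\lambda}(u)+I_\lambda(v_n^1)+o(1)$ and $\|u_n\|_2^2=\|u\|_2^2+\|v_n^1\|_2^2+o(1)$. If $v_n^1\to 0$ strongly, the process stops with $k=0$. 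Otherwise $v_n^1\not\to 0$ in $L^p(\R^3)$, so by Lions' lemma there are $\delta>0$, $R>0$ and $\{y_n^1\}$ with $\int_{B_R(y_n^1)}|v_n^1|^2\,dx\ge\delta$; since $v_n^1\rightharpoonup 0$, the centers must satisfy $|y_n^1|\to\infty$. Translating, $w_n^1:=v_n^1(\cdot+y_n^1)\rightharpoonup w^1\neq 0$, and by translation invariance of $I_\lambda$ together with $V(\cdot+y_n^1)\to 0$, the profile $w^1$ is a nontrivial solution of \eqref{lim-eq}. I would then restart the argument with $v_n^2:=v_n^1-w^1(\cdot-y_n^1)$, after checking that it is again a $(PS)$ sequence for $I_\lambda$ weakly converging to $0$ and that the energies and masses continue to split additively.

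Finally, for termination, testing the limit equation against $w^j$ gives $\|\nabla w^j\|_2^2+\lambda\|w^j\|_2^2+B(w^j)=\int_{\R^3}|w^j|^p\,dx$, and since $\lambda>0$ this combines with the Gagliardo--Nirenberg inequality to yield a uniform lower bound $I_\lambda(w^j)\ge c_0>0$ on the energy of every nontrivial profile. Because $J_{V,\lambda}(u_n)$ is bounded and each extracted bubble contributes at least $c_0$, only finitely many $w^j$ can appear, which fixes $k$ and closes the induction, delivering \eqref{eq2.1}, \eqref{2.2} and \eqref{2.3}. The main obstacle I anticipate is controlling the nonlocal Coulomb interaction throughout this scheme: one must establish both the nonlocal Brezis--Lieb splitting and the vanishing of the cross-interaction terms in $B$ between profiles whose centers diverge from one another, which requires careful Hardy--Littlewood--Sobolev estimates exploiting $|y_n^i-y_n^j|\to\infty$. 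Keeping track of these nonlocal cross-terms (absent in the purely local NLS case) is the principal technical difficulty, while the role of the assumption $\|V^-\|_{3/2}<S$ is to guarantee the coercivity of the quadratic form $u\mapsto\int_{\R^3}(|\nabla u|^2+V u^2)\,dx$ needed to make the splitting estimates meaningful.
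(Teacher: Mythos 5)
Your overall scheme coincides with the paper's proof: an iterative Benci--Cerami profile decomposition using the Brezis--Lieb lemma for the local terms, the nonlocal splitting $B(u_n)=B(u)+B(u_n-u)+o(1)$ (the paper quotes Zhao--Zhao for exactly this), a non-vanishing argument producing centers with $|y_n^j|\to\infty$ (the paper runs a unit-cube decomposition, which is a hands-on version of the Lions lemma you invoke), translation invariance of $I_\lambda$ plus the decay and integrability of $V$ to identify the limit profiles as solutions of \eqref{lim-eq} and to pass the potential term to the limit, and finally a uniform lower bound on each bubble to stop the iteration. Up to that last step your proposal and the paper's proof are essentially identical.

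The gap is in the termination step. You claim that the Nehari identity $\|\nabla w^j\|_2^2+\lambda\|w^j\|_2^2+B(w^j)=\|w^j\|_p^p$ combined with Gagliardo--Nirenberg yields a uniform bound $I_\lambda(w^j)\ge c_0>0$. Substituting the Nehari identity into the energy gives
\begin{equation*}
I_\lambda(w^j)=\frac{p-2}{2p}\left(\|\nabla w^j\|_2^2+\lambda\|w^j\|_2^2\right)+\frac{p-4}{4p}B(w^j),
\end{equation*}
and for $p\in\left(\frac{10}{3},4\right)$ the coefficient of $B(w^j)$ is \emph{negative}. The only available control is $B(w^j)\le \bar C\|\nabla w^j\|_2\|w^j\|_2^3$, and since the masses of the bubbles are merely bounded (not small), this term cannot be absorbed into the positive part for fixed $\lambda$: Nehari plus Gagliardo--Nirenberg alone does not give positivity of $I_\lambda(w^j)$, let alone a uniform lower bound, on that part of the range of $p$. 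What these ingredients do give is a uniform $H^1$ lower bound, since $\min\{1,\lambda\}\|w^j\|_{H^1}^2\le\|w^j\|_p^p\le C\|w^j\|_{H^1}^p$ with $p>2$ forces $\|w^j\|_{H^1}\ge\delta_0>0$. So either terminate through the norm identities (the splitting of $\|\nabla\cdot\|_2^2$ and $\|\cdot\|_2^2$) using this $\delta_0$, or invoke the Pohozaev identity, which every solution of \eqref{lim-eq} satisfies and which rewrites the energy as
\begin{equation*}
I_\lambda(w^j)=\frac{3p-10}{6(p-2)}\|\nabla w^j\|_2^2+\frac{\lambda}{2}\|w^j\|_2^2+\frac{3p-8}{12(p-2)}B(w^j),
\end{equation*}
whose coefficients are all positive for $p>\frac{10}{3}$; together with the $H^1$ lower bound this does produce a uniform $c_0>0$. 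The paper's proof takes essentially this second route in disguise: it bounds the bubble energies from below by $c_a>0$ via the characterization $c_a=\inf_{u\in\mathcal{V}(a)}I(u)$ from \cite{BJ2013} (which encodes the Pohozaev constraint), converts this into a uniform $H^1$ lower bound, and stops the iteration through the norm splitting. With this correction your argument is complete.
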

            \begin{remark}
            In the Splitting Lemma we do not need assumptions $(V_1)$, $(V_1)'$ and $(V_2)$.
            \end{remark}
            \begin{proof}[Proof of the Splitting Lemma]
            The proof of Lemma \ref{lem2.2} is closely similar to \cite[lemma 3.1]{28}, the only differences being the presence of the non-local term and the conditions satisfied by $V$.\\

            Setting $\psi^1_n:=u_n-u$, we have $\psi^1_n\to 0$ weakly in $H^1(\R^3)$ and strongly in $L^p_{loc}(\R^3)$, hence we have
            \begin{equation}
            \label{psi-H1}\|\nabla\psi^1_n\|^2_2=\|\nabla u_n\|^2_2-\|\nabla u\|^2_2+o(1)\qquad\text{as}\, n\to\infty 
            \end{equation}
            \begin{equation}
            \label{psi-L2}\|\psi^1_n\|^2_2=\|u_n\|^2_2-\|u\|^2_2+o(1)\qquad\text{as}\, n\to\infty 
            \end{equation}
            and, thanks to the Br\'ezis-Lieb Theorem (see \cite{BL}),
            \begin{equation}
            \label{brezis-lieb}\|u_n\|^p_p=\|u\|^p_p+\|\psi^1_n\|^p_p+o(1)   \qquad\text{as}\, n\to\infty, 
            \end{equation}
            which yields that
            \begin{equation}
            \label{rel-nabla-I}
            \nabla I_\lambda(\psi^1_n)=\nabla J_{V,\lambda}(u_n)-\nabla J_{V,\lambda}(u)+o(1)\qquad\text{in}\,H^{-1}(\R^3)\,\text{as}\, n\to\infty,
            \end{equation}
            since $\psi^1_n\rightharpoonup 0$ weakly in $H^1(\R^3)$,
            \begin{equation*}
				B(u_n)-B(u)=B(u_n-u)+o(1)\qquad\text{as}\, n\to\infty
		  \end{equation*}
            due to \cite[lemma 2.2]{zz} and the mapping $$v\in H^1(\R^3)\mapsto \int_{\R^3}V\psi v\in\R$$ is in $H^{-1}(\R^3)$, since $V\in L^{\bar{q}}(\R^3)$ for some $\bar{q}\ge \frac{3}{2}$.\\ 
            
            If $\psi^1_n\to 0$ strongly in $H^1(\R^3)$ we are done. Otherwise we have
            \begin{equation}
            \label{Iu-n-unif-pos}
            I_\lambda(\psi^1_n)+\frac{1}{4}B(\psi^1_n)\ge \kappa>0
            \end{equation}
            for some $\kappa>0$ and for $n>0$ large enough. In order to prove (\ref{Iu-n-unif-pos}), we note that (\ref{rel-nabla-I}) gives
            \begin{equation}
            \label{rel-Ipsi}
            I_\lambda(\psi^1_n)+\frac{1}{4}B(\psi^1_n)=\frac{p-2}{2p}\left(\|\psi^1_n\|^2_2+\lambda\|\psi^1_n\|^2_2+\int_{\R^3}V(\psi^1_n)^2dx+B(\psi^1_n)\right)+o(1)=\frac{p-2}{2p}\|\psi^1_n\|^p_p+o(1)\qquad\text{as}\,n\to\infty
            \end{equation}
            As a consequence if we assume that, up to a subsequence, $$I_\lambda(\psi^1_n)+\frac{1}{4}B(\psi^1_n)\to 0\qquad\text{as}\,n\to\infty,$$
            then (\ref{rel-Ipsi}) gives
$$0\le(1-S^{-1}\|V^-\|_{\frac{3}{2}})\|\psi^1_n\|^2_2+\lambda\|\psi^1_n\|^2_2\le \|\psi^1_n\|^2_2+\lambda\|\psi^1_n\|^2_2+\int_{\R^3}V(\psi^1_n)^2dx+B(\psi^1_n)\to 0\qquad\text{as}\,n\to\infty.$$
Here we have used the lower bound
$$\int_{\R^3}V(\psi^1_n)^2dx\ge-\int_{\R^3}V^-(\psi^1_n)^2dx\ge-S^{-1}\|V^-\|_{\frac{3}{2}}\|\psi^1_n\|^2_2.$$
Now we decompose $\R^3$ in the union of countably many closed unit cubes $Q_i$ whose interiors are disjoint and we prove that there exists $\gamma>0$ such that
\begin{equation}
\label{dn-unif-pos}
d_n:=\max_i\|\psi^{1}_n\|_{L^p(Q_i)}\ge \gamma
\end{equation}
In fact, using (\ref{rel-Ipsi}) and the Sobolev embedding $H^1(\R^3)\subset L^p(\R^3)$, we can see that
\begin{equation}\notag
\begin{aligned}
\frac{2p}{p-2}\left(I_\lambda(\psi^1_n)+\frac{1}{4}B(\psi^1_n)\right)+o(1)&=\|\psi^1_n\|^p_p=\sum_i\|\psi^1_n\|^p_{L^p(Q-i)}
\le d_n^{p-2}\sum_i\|\psi^1_n\|^2_{L^p(Q-i)}\\
&\le C_p d_n^{p-2}\sum_i\left(\|\psi^1_n\|^2_{L^2(Q_i)}+\lambda\|\nabla\psi^1_n\|^2_{L^2(Q_i)}\right)\\
&\le \tilde{C}_pd_n^{p-2}\frac{2p}{p-2}\left(I_\lambda(\psi^1_n)+\frac{1}{4}B(\psi^1_n)\right),
\end{aligned}
\end{equation}
which shows that
\begin{equation}\notag
0<\kappa\le \liminf_{n\to\infty}\left(I_\lambda(\psi^1_n)+\frac{1}{4}B(\psi^1_n)\right)\le\limsup_{n\to\infty}(1-\tilde{C}_pd_n^{p-2})\left(I_\lambda(\psi^1_n)+\frac{1}{4}B(\psi^1_n)\right)\le 0
\end{equation}
if $d_n\to 0$, so that (\ref{dn-unif-pos}) is true.\\

Taking $y^1_n$ to be the centre of the hypercube $Q_j$ such that $d_n=\|\psi^1_n\|^p_{L^p(Q_j)}$, we can see that, up to a subsequence, $|y^1_n|\to\infty$. In fact, if $y^1_n$ were bounded, then up to a subsequence $y^1_n\equiv y^1$ would be constant and the corresponding cube $Q_j$ would satisfy $$\|\psi^1_n\|_{L^p(Q_j)}\ge \gamma>0,$$
which is not possible because $\psi^1_n\rightharpoonup 0$ in $H^1(\R^3)$, which implies that $\psi^1_n\to 0$ strongly in $L^p_{loc}(\R^3)$.\\

As a consequence, the sequence $\psi^1_n(\cdotp+y^1_n)$ converges to some solution $w^1$ to
\begin{equation}
\label{lim-eq-R3}
-\Delta w^1+\lambda w^1+V(x)w^1=|w^1|^{p-2}w^1\qquad\text{in}\,\R^3
\end{equation}
weakly in $H^1(\R^3)$ and strongly in $L^p_{loc}(\R^3)$. Applying the above argument to the hypercube centred at the origin, we can see that $w^1\ne 0$. 
$$u_n=u+w^1(\cdotp-y^1_n)+\psi^2_n(\cdotp-y^1_n),\qquad \psi^2_n\rightharpoonup 0\,\text{as}\,n\to\infty\,\text{weakly}\, \text{in}\,H^1(\R^3).$$

Then by induction for $j\ge 2$, we find sequences $y^j_n\in \R^3$ with $|y^j_n|\to\infty$, sequences $\psi^j_n:=\psi^{j-1}_n(\cdotp+y^{j-1}_n)$ and solutions $w^j\in H^1(\R^3)$ such that 
\begin{equation}\label{lim-norms}
\begin{aligned}
\|\nabla\psi^j_n\|^2_2&=\|\nabla\psi^{j-1}_n\|^2_2-\|\nabla w^j\|^2_2+o(1)=\|\nabla u_n\|^2_2-\|\nabla u\|^2_2-\sum_{i=1}^{j-1}\|\nabla w^{i}\|^2_2+o(1)\\
\|\psi^j_n\|^2_2&=\|\psi^{j-1}_n\|^2_2-\|w^j\|^2_2+o(1)=\|u_n\|^2_2-\|u\|^2_2-\sum_{i=1}^{j-1}\|w^{i}\|^2_2+o(1)+o(1)\\
\|\psi^j_n\|^p_p&=\|\psi^{j-1}_n\|^p_p-\|w^j\|^p_p+o(1)=\|u_n\|^p_p-\|u\|^p_p-\sum_{i=1}^{j-1}\|w^{i}\|^p_p+o(1)+o(1)\\
B(\psi^j_n)&=B(\psi^{j-1}_n)-B(w^j)+o(1)=B(u_n)-B(u)-\sum_{i=1}^{j-1}B(w^j)+o(1)
\end{aligned}
\end{equation}
as $n\to\infty$.\\

Now we prove that this iteration stops after a finite number of steps. For this purpose we note that the energy of any solution $w\ne 0$ to problem (\ref{eq1.7}) with $\|w\|_2\le a$ is bounded from above by the energy of the least energy solution $u_a$ provided $a>0$ is small enough. More precisely, the lower bound
\begin{equation}
I_\lambda(w)=I(w)+\lambda\|w\|^2_2>I(w)\ge c_{\|w\|_2}\ge c_a.
\end{equation}
holds. In particular, using that $B(w)>0$, we have
$$I_\lambda(w)+\frac{1}{4}B(w)>c_a$$
for such $w$. Using that $w\in H^1(\R^3)$ is a weak solution to (\ref{lim-eq-R3}), we have
\begin{equation}\notag
\begin{aligned}
&0<c_a<I_\lambda(w)+\frac{1}{4}B(w)=\frac{p-2}{2p}\left(\|\nabla w\|_2^2+\lambda\|w\|^2_2+B(w)\right)\\
&\le c(p)\left(\|\nabla w\|_2^2+\lambda\|w\|^2_2+\|\nabla w\|_2\|w\|^3_2\right)\le 2c(p)\left(\|\nabla w\|_2^2+\lambda\|w\|^2_2+\|w\|^2_{H^1(\R^3)}\|w\|^2_2\right)\\
&\le 2c(p)\max\{1,\lambda\}\|w\|^2_{H^1(\R^3)}(1+\|w\|^2_{H^1(\R^3)}),
\end{aligned}
\end{equation}
which gives $$\|w\|^2_{H^1(\R^3)}\ge \tilde{c}(a,p,\lambda)>0,$$
for any solution $w$ to (\ref{lim-eq-R3}) with $\|w\|_2\le a$. This fact together with (\ref{lim-norms}) shows that the process stops after a certain number $k\in\N$ of iterations.\\

This shows that (\ref{eq2.1}) and (\ref{2.2}) are fulfilled. Now we need to prove that (\ref{2.3}) holds.\\ 

Due to (\ref{lim-norms}), it is enough to show that
            \begin{equation}
            \label{conv-int-Vun}
            \int_{\R^3}V u_n^2 dx\to \int_{\R^3}V u^2 dx\qquad\text{as}\,n\to\infty.
            \end{equation}            
            For this purpose, we observe that for any $\epsilon>0$ there exist $R>0$ and $n_0>0$ such that, for any $n\ge n_0$ and $1\le j\le k$ we have
           \begin{equation}
           \begin{aligned}
           \int_{\R^3}V w^j_n(x-y^j_n)^2 dx&=
           \int_{B_R(0)} V w^j_n(x-y^j_n)^2 dx+\int_{\R^3\setminus B_R(0)} V w^j_n(x-y^j_n)^2dx\\
           &\le \|V\|_{\bar{q}}\left(\int_{\R^3\setminus B_{|y^j_n|-R}(0)}|w^j|^{\frac{2\bar{q}}{\bar{q}-1}}dx\right)^{\frac{\bar{q}-1}{\bar{q}}}+\epsilon\|u^j\|^2_2<c\epsilon
           \end{aligned}
           \end{equation}
           by the embedding $H^1(B_R(0))\subset L^{\frac{2\bar{q}}{\bar{q}-1}}(B_R(0))$ and the fact that $\lim_{|x|\to\infty}V(x)=0$ and $|y^j_n|\to\infty$ as $n\to\infty$. A similar argument shows that
           $$\int_{\R^3}Vw^j(x-y^j_n)u dx\to 0\qquad\text{as}\,n\to\infty.$$
            \end{proof}
Now we can conclude the proof of Theorem \ref{theorem1.2}. 
We will see that this is the only point in which we use $(V_2)$. The argument is similar to the one used in \cite{BQZ} for the poofs of Theorems $1.6,\,1.8$ and $1.9$. Since we consider the nonlocal problem,  we must take care of the decay estimate of the nonlocal term $\phi_u$.
			
			\begin{proof}[\textbf{Proof of Theorem \ref{theorem1.2}}]
				Let $\Omega:=B_1(0)$ be the unit ball. For any $a\in(0,a^*)$ and $r\geq r_a$, we consider the solution $(\lambda_r,u_r)\in\R\times H^1(\Omega_r)$ to Problem \eqref{eq-Omega_r} constructed in Theorem \ref{th-bd-V_1}. From Propositions \ref{prop-existence-sol} and  Lemma \ref{lemma-PS}, there exists a sequence $r_n\to\infty$ such that the sequence $\{u_n\}:=\{u_{r_n}\}_n\subset H^1(\R^3)$ is a bounded Palais-Smale sequence of $E_{\infty,1}$ which is weakly converging to a solution $u\in H^1(\R^3)$ to 
				$$-\Delta u+\lambda u+V(x)u+\phi_u u=|u|^{p-2}u\quad\text{in}\quad\R^3,$$
				with $u\ge 0$ and $\lambda>0$. It remains to prove that $u\in \mathcal{S}_a$.\\ 
				
				By Splitting Lemma \ref{lem2.2}, we have
				$$u_n=u+\sum_{j=1}^k w^j(\cdotp-y^j_n)+o(1)\quad\text{strongly}\quad\text{in}\quad H^1(\R^3)$$
				where $k\ge 1$ is an integer and $w^1,\dots,w^k$ are solutions to the (\ref{lim-eq}).\\
                
                Moreover, setting $\tilde{a}:=\|u\|_2$ and $a_j:=\|w^j\|_2$, for $1\le j\le k$, we have
				$$a^2=\tilde{a}^2+\sum_{j=1}^k a_j^2+o(1),\qquad J_V(u_n)=J_V(u)+\sum_{j=1}^k J_\infty(w^j)+o(1).$$
                If $k=0$, then $u_n\to u$ strongly in $H^1(\R^3)$ and the proof is done. If we assume by contradiction that $k\ge 1$, then, up to a subsequence, there exists $n_0>0$ and $i\in\{1,\dots,k\}$ such that $$|y^{i}_n|=\min\{|y^{j}_n|:\,1\le j\le k\},\,\forall\,n\ge n_0,\qquad \lim_{n\to\infty}\frac{|y^{i}_n-y^{j}_n|}{|y^{i}_n|}= d_j\in[0,\infty],\,\forall\,1\le j\le k.$$ 
                Assume without loss of generality that $i=1$. We take $0<\delta<\rho$, where $\rho>0$ is defined in $(V_2)$, and  consider the annulus $$A_n:=B_{\frac{3\delta}{2}|y^1_n|}(y^1_n)~\backslash~ B_{\frac{\delta}{2}|y^1_n|}(y^1_n).$$
                We prove that, if $\delta$ is small enough and $n_0$ is large enough, then 
                \begin{equation}\label{y^j-far}
            {\rm dist}(y^{j}_n,A_n)>\frac{\delta}{4}, \qquad\forall\,n\ge n_0,\,1\le j\le k
                \end{equation}
                In fact, setting $K:=\{j\in\{1,\dots,k\}:\, d_j>0\}$, we can see that, if $j\in\{1,\dots,k\}\setminus K$, then $d_j=0$ and hence, for $n$ large, $y^{j}_n\in B_{\frac{\delta}{4}|y^{1}_n|}(y^{1}_n)$, so that (\ref{y^j-far}) is true. On the other hand, if $j\in K$, then $$|y^{j}_n-y^{1}_n|\ge \frac{1}{2}|y^{1}_n|d_j\ge\frac{1}{2}\min\{d_\ell:\,1\le \ell\le k\}|y^{1}_n|>\frac{\delta}{4}|y^{1}_n|$$
                provided $n$ is large enough and $\delta$ is small enough, hence (\ref{y^j-far}) is satisfied as well.\\

                In particular, (\ref{y^j-far}) yields that
                \begin{equation}\label{un-L2-An}
                    \|u_n\|_{L^2(A_n)}\to 0\qquad\text{as}\,n\to\infty
                \end{equation}
                In fact
                \begin{equation}\notag
                \begin{aligned}
                &\|u_n\|_{L^2(A_n)}\le \|u\|_{L^2(A_n)}+\sum_{j=1}^k\|w^j(\cdotp-y^{j}_n)\|_{L^2(A_n)}+o(1)\\
                &\le \|u\|_{L^2(\R^3\setminus B_{\frac{\delta|y^1_n|}{2}}(0))}+\sum_{j=1}^k\|w^j\|_{L^2(\R^3\setminus B_{\frac{\delta|y^1_n|}{4}}(0)))}+o(1)=o(1)
                \end{aligned}
                \end{equation}
                as $n\to\infty$, thanks to (\ref{y^j-far}).

				Now we prove that, for any $1\le j\le k$, the sequence $\{y^{j}_n\}$ satisfies 
				\begin{equation}
					\label{y-n-far-boundary}
					{\rm dist}(y^j_n,\R^3\setminus B_{r_n}(0))\to\infty\quad\text{as}\quad n\to\infty.
				\end{equation}
				Assume by contradiction that (\ref{y-n-far-boundary}) does not hold and there is $R>0$ such that, up to a subsequence,  $$\sup_{n}{\rm dist}(y^j_n,\R^3\setminus B_{r_n}(0))<R,$$ 
                for some $j\in\{1,\dots,k\}$. Assume that, up to a subsequence,  $\frac{y^j_n}{|y^j_n|}\to e\in S^{2}$, where $S^2\subset \R^3$ denotes the unit $2$-sphere. Then, setting $\Sigma_n:=B_{2R}(y^1_n)\setminus B_{r_n}(0)$, we have as $n\rightarrow\infty$,
				\begin{equation}\notag
					\begin{aligned}
						0&=\int_{\Sigma_n}|u_n|^2 dx=\int_{\Sigma_n}|u(x)+\sum_{\ell=1}^kw^\ell(x-y^j_n)+o(1)|^2dx=\int_{\Sigma_n}|w^j(x-y^j_n)|^2 dx+o(1)\\
						&\ge \int_{B_{R/4}(y^j_n+\frac{3}{2}R\frac{y^j_n}{|y^1_n|})}|w^j(x-y^j_n)|^2 dx+o(1)\ge\int_{B_{R/4}(Re)}|w^j|^2 dx>0,
					\end{aligned}    
				\end{equation}
				which is impossible and (\ref{y-n-far-boundary}) is proved.\\
			
				Extending $u_n$ to the whole $\R^3$ by setting it to be $0$ outside $B_{r_n}(0)$, we can get a non-negative subsolution to (\ref{eq-R3}) in $\R^3$, which we still denote by $u_n$. In other words, the differential inequality
				\begin{equation}
					\label{u_n-subsol}
					-\Delta u_n+V(x)u_n+\tilde{\lambda} u_n+(|x|^{-1}\ast |u_n|^2)u_n\le|u_n|^{p-2}u_n\quad\text{in}\quad\R^3
				\end{equation}
				is fulfilled for large $r$, where $\tilde{\lambda}:=\frac{1}{2}\liminf\limits_{n\rightarrow\infty}\lambda_{r_n}>0$ due to Proposition \ref{prop-existence-sol}. For $m,\,n\ge 1$, setting
				$$R_{m,n}:=\overline{B_{\frac{3\delta}{2}|y^1_n|-m}(y^1_n)}~\backslash ~B_{\frac{\delta}{2}|y^1_n|+m}(y^1_n),$$
				then by \cite[Theorem 8.17]{Tru} and (\ref{un-L2-An}), we can see that, for $n$ large enough, 
				\begin{equation}
					\label{est-u-annulus}
					\|u_n\|^{p-2}_{L^\infty(R_{1,n})}\le c\|u_n\|^{p-2}_{L^2(A_n)}<\frac{\tilde{\lambda}}{4},
				\end{equation}
				for some constant $c>0$.\\ 
				
				We take, for $m\ge 1$ a cutoff function $\xi_m\in C^\infty(\R)$ such that $0\le \xi_m(t)\le 1$ and $|\xi_m'(t)|\le 4$, for any $m\ge 1$ and $t\in\R$, and
				$$\xi_m(t)=
				\begin{cases}
					1\quad\text{if}\, \frac{\delta}{2}|y^1_n|+m<t< \frac{3\delta}{2}|y^1_n|-m,\\
					0\quad\text{if}\,t< \frac{\delta}{2}|y^1_n|+m-1\,\text{or}\,t> \frac{3\delta}{2}|y^1_n|-m+1
				\end{cases}
				$$
				Setting $\psi_m(x):=\xi_m(|x-y^1_n|)$ and testing inequality (\ref{u_n-subsol}) with $\psi_m^2 u_n\ge 0$, we have
				\begin{equation}\label{upper-est}
					\begin{aligned}
						&\int_{R_{m-1,n}}|\nabla u_n|^2\psi_m^2 dx+\int_{R_{m-1,n}}(V(x)+\tilde{\lambda} )u_n^2\psi_m^2 dx-\int_{R_{m-1,n}}|u_n|^p\psi_m^2 dx\\
						&+\int_{R_{m-1,n}}\phi_{u_n}u_n^2\psi_m^2 dx\le-2\int_{R_{m-1,n}}u_n\psi_n\nabla u_n\cdotp\nabla \psi_m dx\le 8\int_{R_{m-1,n}\setminus R_{m,n}}\psi_n|\nabla u_n||u_n|dx\\
						&\le 4\left(\int_{R_{m-1,n}\setminus R_{m,n}}(|\nabla u_n|^2+u_n^2) dx\right)=4(b_m-b_{m-1}),
					\end{aligned}
				\end{equation}
				where we have set $$b_m:=\int_{R_{m,n}}(|\nabla u_n|^2+u_n^2) dx.$$
				On the other hand, using (\ref{est-u-annulus}) and the fact that $V\ge 0$, it is possible to see that
				\begin{equation}\label{lower-est}
					\begin{aligned}
						&\int_{R_{m-1,n}}|\nabla u_n|^2\psi_m^2 dx+\int_{R_{m-1,n}}(V(x)+\tilde{\lambda} )u_n^2\psi_m^2 dx-\int_{R_{m-1,n}}|u_n|^p\psi_m^2 dx\\
						&+\int_{R_{m-1,n}}\phi_{u_n}u_n^2\psi_m^2 dx\ge \int_{R_{m-1,n}}|\nabla u_n|^2\psi_m^2 dx+ \frac{\lambda}{4}\int_{R_{m-1,n}}u_n^2\psi_m^2 dx\\
						&\ge\min\left\{1,\frac{\lambda}{4}\right\}\int_{R_{m-1,n}}(|\nabla u_n|^2+u_n^2)\psi_m^2 dx
						\ge\min\left\{1,\frac{\lambda}{4}\right\} b_m. 
					\end{aligned}
				\end{equation}
				As a consequence, setting $\kappa:=4\max\left\{1,\frac{4}{\lambda}\right\}>0$ and $\vartheta:=\frac{\kappa}{\kappa+1}\in(0,1)$, estimates (\ref{upper-est}) and (\ref{lower-est}) give
				$$b_m\le\vartheta b_{m-1}\le\vartheta^m\max_n\|u_n\|_{H^1(\R^3)}\le K_a e^{m\log\vartheta},$$
				for some constant $K_a>0$ depending on $a$ only, since $\{u_n\}$ is bounded in $H^1(\R^3)$.\\
				
                Taking
                $m:=\left[\frac{\delta|y^1_n|}{4}\right]-1$, 
                we have
				$$\int_{R_{m,n}}(|\nabla u_n|^2+u_n^2)dx\le ce^{m\log\vartheta}\le ce^{\frac{\delta|y^1_n|}{4}\log\vartheta},$$
				so that, by \cite[Theorem 8.17]{Tru} again and the elliptic estimates up to the boundary given by   \cite[Theorem 9.13]{Tru}, one has
				\begin{equation}
					\label{est-u-n-p-omega-n}
					|\nabla u_n(x)|^2+u_n^2(x)\le c e^{-\tilde{c}|y^1_n|}\quad\forall\quad x\in B_{\frac{5\delta}{4}|y^1_n|-1}(y^1_n)\setminus B_{\frac{3\delta}{4}|y^1_n|+1}(y^1_n),
				\end{equation}
				for some $c,\,\tilde{c}>0$.\\
				
				Arguing as above, it is possible to show that, changing if necessary the values of $c$ and $\tilde{c}$,
				\begin{equation}
					\label{est-u-n-p-B-rn}
					u^2(x)+|\nabla u_n(x)|^2\le c e^{-\tilde{c}r_n}\quad\forall\quad x\in \bar{B}_{r_n}(0)\setminus B_{\frac{3r_n}{4}+1}(0).
				\end{equation}
				For this purpose, it is enough to set
				$$R_{m,n}:=B_{r_n+m}(0)\setminus B_{r_n-m}(0),$$
				take $\xi_m\in C^\infty_c(\R)$ such that
				$$\xi_m(t)=
				\begin{cases}
					1\qquad\text{if}\,r_n-m<t<r_n+m,\\
					0\qquad\text{if}\,t<r_n-m-1\,\text{or}\,t>r_n+m+1
				\end{cases}
				$$
				and observe that 
				\begin{equation}
					\label{est-u-annulus'}
					\|u_n\|^{p-2}_{L^\infty(R_{m,n})}\le c\|u_n\|^{p-2}_{L^2(R_{1,n})}\le\frac{\tilde{\lambda}}{4}\quad\forall\, m\ge 2,
				\end{equation}
				for $n$ large enough, since ${\rm dist}(y^1_n,\R^3 \setminus B_{r_n}(0))\to\infty$ as $n\to\infty$ (see (\ref{y-n-far-boundary})). Setting once again $\psi_m(x):=\xi_m(|x-y^1_n|)$ and testing the differential inequality (\ref{u_n-subsol}) with $u_n\psi^2_m$, we have
				$$\int_{B_{r_n}(0)\setminus B_{r_n-1}(0)}(|\nabla u_n|^2+u_n^2)dx\le c e^{-\tilde{c}r_n},$$
				for some constants $c,\,\tilde{c}>0$. Using once again \cite[ Theorems 8.17 and 9.13]{Tru}, we conclude that (\ref{est-u-n-p-B-rn}) is true.\\
				
				As we observed in Remark \ref{rem-test-function}, we can test equation (\ref{eq-Omega_r}) with $\nabla u_n\cdotp y^1_n$, by Divergence Theorem, it yields
				\begin{equation}
					\label{magic-T-formula}
					\begin{aligned}
						\frac{1}{2}\int_{\omega_n}u_n^2(\nabla V(x)\cdotp y^1_n+\nabla\phi_{u_n}\cdotp y^1_n)dx&=\int_{\partial\omega_n}(\nabla u_n\cdotp\nu)(\nabla u_n\cdotp y^1_n)d\sigma-\frac{1}{2}\int_{\partial\omega_n}y^1_n\cdotp\nu |\nabla u_n|^2 d\sigma\\
						&+\int_{\partial\omega_n}(y^1_n\cdotp\nu)\left(\frac{\lambda}{2}u_n^2+\frac{V(x)}{2}u_n^2+\frac{1}{2}\phi_{u_n}u_n^2-\frac{u_n^p}{p}\right)d\sigma
					\end{aligned}    
				\end{equation}
				where we have set $\omega_n:=B_{\delta|y^1_n|}(y^1_n)\cap B_{r_n}(0)$ and $\nu$ is its exterior normal vector, which is defined almost everywhere on $\partial\omega_n$.\\
				
				Multiplying (\ref{magic-T-formula}) by $|y^1_n|^\alpha$, we can see that there exists $K>0$ and $n_K>0$ such that, for any $n>n_K$ we have
				\begin{equation}\label{one-hand}
						|y^1_n|^\alpha\int_{\omega_n}u_n^2 \nabla V(x)\cdotp y^1_n dx\le   \sup_{x\in B_{\delta|y^1_n|}(y^1_n)}\left(|y^1_n|^\alpha\nabla V(x)\cdotp y^1_n\right)\int_{\omega_n}|u_n|^2 dx\\
						\le-K\int_{\omega_n}|u_n|^2 dx
				\end{equation}
				due to assumption $(V_2)$ and the fact that $$\liminf_{n\to\infty}\int_{\omega_n}|u_n|^2 dx\ge \int_{\R^3}|w^1|^2 dx=a_1^2>0.$$
                
				Using that $u_n=0$ on $\partial B_{r_n}(0)$, 
                 the exponential decay of $u_n$ and $\nabla u_n$ on $\partial \omega_n$ given by (\ref{est-u-annulus}) and (\ref{est-u-annulus'}) and the uniform bound of $\phi_{u_n}$ in $W^{1,\infty}(\R^3)$ given by Lemma \ref{lemma-est-nabla-phi}, we can see that the right hand side of (\ref{magic-T-formula}) fulfills
				\begin{equation}\notag
					\begin{aligned}
						&|y^1_n|^\alpha \bigg(\int_{\partial\omega_n}(\nabla u_n\cdotp\nu)(\nabla u_n\cdotp y^1_n)d\sigma-\frac{1}{2}\int_{\partial\omega_n}y^1_n\cdotp\nu |\nabla u_n|^2 d\sigma\\
						&+\int_{\partial\omega_n}(y^1_n\cdotp\nu)\left(\frac{\lambda}{2}u_n^2+\frac{V(x)}{2}u_n^2+\frac{1}{2}\phi_{u_n}u_n^2-\frac{u_n^p}{p}\right)d\sigma\bigg)\to 0
					\end{aligned}
				\end{equation}
				as $n\to\infty$. 
                
                Therefore, in order to have a contradiction with \eqref{one-hand} it is enough to show that 
                \begin{equation}
                \label{est-nonloc-term}
                |y^1_n|^\alpha\int_{\omega_n}u_n^2\nabla\phi_{u_n}\cdotp y^1_n dx\to 0\qquad\text{as}\, n\to\infty.    
                \end{equation}
                Using that $u_n=0$ in $\R^3\setminus B_{r_n}(0)$ and $\nabla\phi_{u_n}\cdotp y^1_n\in H^1(B_{\delta|y^1_n|}(y^1_n))$, due to Lemma \ref{lemma-est-nabla-phi}, we can test the equation
                $$-\Delta\phi_{u_n} =u_n^2\qquad\text{in}\,\R^3,$$
                with $\nabla\phi_{u_n}\cdotp y^1_n$ in $B_{\delta|y^1_n|}(y^1_n)$, which gives
                \begin{equation}\notag
                \begin{aligned}\int_{\omega_n}u_n^2\nabla\phi_{u_n}\cdotp y^1_n dx&=\int_{B_{\delta|y^1_n|}(y^1_n)}u_n^2\nabla\phi_{u_n}\cdotp y^1_n dx=-\int_{B_{\delta|y^1_n|}(y^1_n)}\Delta\phi_{u_n}\nabla\phi_{u_n}\cdotp y^1_n dx\\
                &=\int_{\partial B_{\delta|y^1_n|}(y^1_n)}\left(\frac{1}{2}\nu\cdotp y^1_n|\nabla \phi_{u_n}|^2-(\nabla\phi_{u_n}\cdotp\nu)(\nabla\phi_{u_n}\cdotp y^1_n)\right)d\sigma.
                \end{aligned}
                \end{equation}
                Differentiating under the integral sign and using estimate (\ref{est-u-n-p-omega-n}) about the behaviour of $u$ near $\partial B_{\delta|y^1_n|}(y^1_n)$ we have 
                \begin{equation}\notag
                \begin{aligned}
                |\nabla\phi_{u_n}(x)|&\le\int_{\R^3}\frac{u_n^2(z)}{|x-z|^2}dz=\int_{B_{\frac{\delta|y^1_n|}{4}}(x)}\frac{u_n^2(z)}{|x-z|^2}dz+\int_{\R^3\setminus B_{\frac{\delta|y^1_n|}{4}}(x)}\frac{u_n^2(z)}{|x-z|^2}dz\\
                &\le ce^{c|y^1_n|}\int_{B_{\frac{\delta|y^1_n|}{4}(0)}}\frac{dz}{|z|^2}+\frac{16}{\delta^2|y^1_n|^2}\int_{\R^3}u_n^2 dx\le \frac{c_\delta}{|y^1_n|^2}\qquad\forall\,x\in\partial B_{\delta|y^1_n|}(y^1_n),
                \end{aligned}                    
                \end{equation}
                for some constant $c_\delta>0$ depending on $\delta$. As a result we have
                $$\left|\int_{\omega_n}u_n^2\nabla\phi_{u_n}\cdotp y^1_n dx\right|=\left|\int_{\partial B_{\delta|y^1_n|}(y^1_n)}\left(\frac{1}{2}\nu\cdotp y^1_n|\nabla \phi_{u_n}|^2-(\nabla\phi_{u_n}\cdotp\nu)(\nabla\phi_{u_n}\cdotp y^1_n)\right)d\sigma\right|\le\frac{c_\delta|y^1_n|}{|y^1_n|^4}|\partial B_{r_n}(0)|=\frac{\tilde{c}_\delta}{|y^1_n|}$$
                for $n>0$ large enough, which shows that (\ref{est-nonloc-term}) is true.
                \vskip 0.1in
                Finally, we have $k=0$ and $u_n\to u$ strongly in $H^1(\R^3)$, which yields that $$c_a\le m_{r_n,1}(a)= E_{r_n,1}(u_n)\to J_V(u),$$
                so that $J_V(u)\ge c_a$ and the proof of Theorem \ref{theorem1.2} is concluded.
                \end{proof}

\section{The proof of Theorem \ref{theorem1.3}: possibly unbounded non-negative potential}\label{sec_thm1.3}
The proof of Theorem \ref{theorem1.3} follows the outlines of the one of Theorem \ref{theorem1.2}. In this Section we will assume that $(V_1)'$ holds and discuss the differences.\\

First we prove Lemma \ref{lemma-MP-geom} in case $(V_1)'$ holds.
\begin{proof}[Proof of Lemma \ref{lemma-MP-geom} if $(V_1)'$ holds] 
The proof is similar to the case in which $(V_1)$ holds. We will only discuss the differences.
\vskip0.1in
            \begin{itemize}
            \item \textit{Construction of $u^1$}.
            \vskip0.1in
            For $a\in(0,a_0)$ and $s\in[\frac{1}{2},1]$, setting $$f_{a,s}(t):=\frac{t^2}{2}\|\nabla u_a\|^2_2+\frac{t^{\frac{3}{q}}}{2}\|V\|_q\|u_a\|^2_{\frac{2q}{q-1}}+\frac{t}{4}B(u_a)-\frac{s}{p}\|u_a\|^p_p t^{\frac{3(p-2)}{2}},$$
            relation (\ref{main-est-E-rs}) becomes
            \begin{equation}
            E_{\infty,\frac{1}{2}}(\gamma_\infty(t))\le \bar{E}_{\infty,\frac{1}{2}}(\gamma_\infty(t))+\frac{t^{\frac{3}{q}}}{2}\|V\|_q\|u_a\|^2_{\frac{2q}{q-1}}\\
            \le f_{a,\frac{1}{2}}(t),    
            \end{equation}
            for any $a\in(0,a_0)$ and $t>0$.\\

            \item \textit{Construction of $u^0$.}\\
            
            Here the only difference is that (\ref{est-E-u0}) becomes
            $$\lim_{r\to\infty}E_{r,\frac{1}{2}}(\gamma_{r}(t))\le f_{a,\frac{1}{2}}(t)<c_a\qquad\forall\,a\in(0,a_0),\,t\in(0,t_{2,a}],$$
            for some $t_{2,a}>0$.
\vskip0.1in
            \item \textit{Lower estimate of the mountain pass level: for given  $0<a<a_0$, $m_{r,s}(a)\ge c_a$, for any $s\in[\frac{1}{2},1]$ and  $r\ge r_a$, if $r_a>0$ is large enough.}
            \vskip0.1in
            Here nothing changes due to the fact $V\ge 0$.
            \vskip0.1in
            \item \textit{Upper estimate of the mountain-pass level: for given $0<a<a_0$ there exists $\tilde{c}_a>c_a$ such that $m_{r,s}(a)\le\tilde{c}_a$, for any $s\in[\frac{1}{2},1]$ and  $r\ge r_a$, if $r_a>0$ is large enough.}
            \vskip0.1in
            First we recall that the following relations hold
\begin{equation}
\label{energy}
\|\nabla u_a\|_2^2+\frac{1}{2}B(u_a)-\frac{2}{p}\|u_a\|^p_p=2 m_a,
\end{equation}
\begin{equation}
\label{nabla}
\|\nabla u_a\|_2^2+B(u_a)-\|u_a\|^p_p=-\lambda_a a^2,
\end{equation}
\begin{equation}
\label{Pohozaev}
\|\nabla u_a\|_2^2+\frac{1}{4}B(u_a)-\frac{3(p-2)}{2p}\|u_a\|^p_p=0
\end{equation}
Using (\ref{nabla}) and the Pohozaev identity (\ref{Pohozaev}), we have $$m_a=\frac{3p-10}{4p}\|u_a\|_p^p+\frac{1}{8}B(u_a).$$
            Differentiating and using once again (\ref{Pohozaev}) we can see that
            \begin{equation}\notag
            \begin{aligned}
            f'_{a,s}(t)&=t \|\nabla u_a\|^2_2+\frac{3}{2q}t^{\frac{3}{q}-1}\|V\|_q\|u_a\|^2_{\frac{2q}{q-1}}+\frac{B(u_a)}{4}-s\frac{3(p-2)}{2p}\|u_a\|^p_pt^{\frac{3p-8}{2}}\\
            &\ge t^{\frac{3p-8}{2}}\left(\|\nabla u_a\|^2_2+\frac{B(u_a)}{4}-\frac{3(p-2)}{2p}\|u_a\|^p_p\right)+\frac{3}{2q}t^{\frac{3}{q}-1}\|V\|_q\|u_a\|^2_{\frac{2q}{q-1}}\\
            &=\frac{3}{2q}t^{\frac{3}{q}-1}\|V\|_q\|u_a\|^2_{\frac{2q}{q-1}}>0
            \end{aligned}
            \end{equation}
            for any $t\in(0,1]$ and $s\in[\frac{1}{2},1]$, which yields that for any $s\in[\frac{1}{2},1]$ there exists $t_s>1$ such that
            \begin{equation}\notag
                f_{a,s}(t_s)=\max_{t\in(0,1)}f_{a,s}(t),\qquad f'_{a,s}(t_s)=0.
            \end{equation}
            Using the fact $f_{a,s}'(t_s)=0$ and $t_s>1$, we have
            \begin{equation}\label{est-max-f}
            \begin{aligned}
            f_{a,s}(t_s)
            &< t_s^{\frac{3(p-2)}{2}}\left(\frac{3p-10}{4p}\|u_a\|_p^p+\frac{1}{8}B(u_a)\right)+\frac{1}{2}\|V\|_q \|u_a\|^2_{\frac{2q}{q-1}}t_s^{\frac{3}{q}}\left(1-\frac{3}{2q}\right)\\
            &=c_a t_s^{\frac{3(p-2)}{2}}+\frac{1}{2}\|V\|_q \|u_a\|^2_{\frac{2q}{q-1}}t_s^{\frac{3}{q}}\left(1-\frac{3}{2q}\right)\qquad\forall\, s\in[\frac{1}{2},1].
            \end{aligned}
            \end{equation}
            Using once again the Pohozaev identity (\ref{Pohozaev}) and the fact that $t_s>1$, we can see that
            \begin{equation}\notag
            \begin{aligned}
            s\frac{3(p-2)}{2p}\|u_a\|_p^p t_s^{\frac{3p-8}{2}}&=\frac{B(u_a)}{4}+t_s\|\nabla u_a\|^2_2+\frac{3}{2q}t_s^{\frac{3}{q}-1}\|V\|_q\|u_a\|^2_{\frac{2q}{q-1}}\\
            &<t_s\left(\frac{B(u_a)}{4}+\|\nabla u_a\|^2_2+\frac{3}{2q}\|V\|_q\|u_a\|^2_{\frac{2q}{q-1}}\right)\\
            &=t_s\left(\frac{3(p-2)}{2p}\|u_a\|_p^p+\frac{3}{2q}\|V\|_q\|u_a\|^2_{\frac{2q}{q-1}}\right),
            \end{aligned}
            \end{equation}
            which yields that
            $$t_s^{\frac{3p-10}{2}}<\frac{1}{s}\left(1+\frac{p}{q(p-2)}\frac{\|V\|_q\|u_a\|^2_{\frac{2q}{q-1}}}{\|u_a\|^p_p}\right)\le\tilde{t}_a,$$
            for any $s\in[\frac{1}{2},1]$, for some $\tilde{t}_a$ independent of $s$. In conclusion, thanks to (\ref{est-max-f}), the statement holds true. 
            \end{itemize}
       \vskip0.1in
            Now we set
            $$\vartheta(t):=\left(1+\frac{p C_q^2}{q(p-2)}\max\left\{1,\frac{3p-8}{p}\right\}t\right)^{\frac{3(p-2)}{3p-10}}\left(1+t\left(1-\frac{3}{2q}\right)\frac{3C_q^2(p-2)}{3p-10}\right)-1\qquad\forall\, t\ge 0$$
            and we prove that, for given $0<a<a_0$, we have 
            \begin{equation}
            \label{bound-m1a-V'}
            m_{r,1}(a)<(1+\theta)c_a\qquad\forall\, r\ge r_a
            \end{equation}
            provided $r_a>0$ is large enough, where $\theta:=\vartheta(\|V\|_q)>0$.\\

            In order to do so we show a lower bound for $\frac{\|u_a\|^p_p}{\|u_a\|^2_{\frac{2q}{q-1}}}$ which is independent of $a$. Solving the system given by equations (\ref{energy}), (\ref{nabla}) and (\ref{Pohozaev}), we have the explicit expressions
\begin{equation}
\begin{aligned}
\|\nabla u_a\|_2^2&=\frac{3p-8}{4(p-3)}\lambda_a a^2+\frac{5p-12}{2(p-3)}c_a,\\
B(u_a)&=\frac{3p-10}{2(p-3)}\lambda_a a^2+\frac{6-p}{p-3}c_a,\\
\|u_a\|_p^p&=\frac{p}{4(p-3)}\lambda_a a^2+\frac{3p}{2(p-3)}c_a.
\end{aligned}
\end{equation}
As a consequence, denoting the best constant in the Sobolev embedding $H^1(\R^3)\subset L^{\frac{2q}{q-1}}(\R^3)$ by $C_q$, we have the lower bound
\begin{equation}\label{bound-Lp}
\begin{aligned}
&\frac{\|u_a\|^p_p}{\|u_a\|^2_{\frac{2q}{q-1}}}\ge\frac{1}{C_q^2}\frac{p(\lambda_a a^2+6 c_a)}{(3p-8)\lambda_a a^2+(10p-24)c_a+4(p-2)a^2}\\
&>\frac{1}{C_q^2}\frac{p(\lambda_a a^2+6 c_a)}{(3p-8)\lambda_a a^2+6pc_a+4(p-2)a^2}>\frac{1}{C_q^2}\min\left\{\frac{p}{3p-8},1\right\}=:\Lambda_{p,q}, 
\end{aligned}
\end{equation}
provided $a>0$ is small enough, since $c_a\to\infty$ as $a\to 0^+$.\\

Moreover, by (\ref{Pohozaev}) and (\ref{energy}), we have $$\|u_a\|^2_{\frac{2q}{q-1}}\le C^2_q\|\nabla u_a\|^2_2\le\frac{6C^2_q(p-2)}{3p-10}c_a.$$ 
As a consequence 
\begin{equation}\notag
\begin{aligned}
&\lim_{r\to\infty}m_{r,1}(a)=c_a\le f_{a,1}(t_1)< t_1^{\frac{3(p-2)}{2}}c_a+\frac{1}{2}\|V\|_q \|u_a\|^2_{\frac{2q}{q-1}}t_1^{\frac{3}{q}}\left(1-\frac{3}{2q}\right)\\
&\le t_1^{\frac{3(p-2)}{2}}c_a\left(1+\|V\|_q\left(1-\frac{3}{2q}\right)\frac{3C_q^2(p-2)}{3p-10}\right)\\
&<\left(1+\frac{p}{q(p-2)}\frac{\|V\|_q\|u_a\|^2_{\frac{2q}{q-1}}}{\|u_a\|^p_p}\right)^{\frac{3(p-2)}{3p-10}}\left(1+\|V\|_q\left(1-\frac{3}{2q}\right)\frac{3C_q^2(p-2)}{3p-10}\right)c_a\\
&=(1+\vartheta(\|V\|_q))c_a,
\end{aligned}
\end{equation}
that is (\ref{bound-m1a-V'}) is fulfilled. We note that we do not need any smallness assumption about $\|V\|_q$ and $\|W\|_q$, provided we take the suitable value of $\theta$ in the upper bound (\ref{bound-m1a-V'}).
\end{proof}
The proof of the Pohozaev identity is similar. The only difference is that we need to be careful to justify the differentiation under the integral sign. We stress that, in case $(V_1)'$ holds, we can prove the Pohozaev identity in case $r<\infty$ only, that is in bounded domains only, which is precisely what we need. Moreover, we need the property $\Omega_r\subset\Omega_{r'}$ if $r>r'$, which holds true if $0\in\Omega$.
\begin{proof}[Proof of Lemma \ref{lemma-pohozaev} if $(V_1)'$ holds in case $r\in(r_a,\infty)$ and $0\in\Omega$]
We need to prove that, if $V\in W^{1,3/2}_{loc}(\R^3)$, then
\begin{equation}
    \frac{d}{dt}\left(\int_{\Omega_r}V\left(\frac{x}{t}\right)u(x)^2 dx\right)=\frac{1}{t}\int_{\Omega_r}W(\frac{x}{t})u^2(x)dx\qquad\forall\,t>0,\,u\in H^1(\R^3),
\end{equation}
where $W(x):=\nabla V(x)\cdotp x$. For this purpose we note that, due to the Friedrichs Theorem (see \cite{Brezis}), for any $\sigma>0$ and $r>r_a$ there exists a sequence of functions $V_n\in C^\infty_c(\R^3)$ such that $V_n\to V$ in $L^{3/2}(\Omega_{\frac{r+1}{\sigma}})$ and $\nabla V_n\to\nabla V$ in $L^{3/2}(\Omega_{r/\sigma})$. As a consequence, the sequence $W_n(x):=\nabla V_n(x)\cdotp x$ fulfills $W_n\to W$ in $L^{3/2}(\Omega_{r/\sigma})$ as well. 
Hence for any  $\varepsilon>0$ there exists $n_0(\varepsilon,\sigma)>0$ such that, for any $n\ge n_0(\varepsilon,\sigma)$ and $t\in(\sigma,\frac{1}{\sigma})$, we have
\begin{equation}\notag
\begin{aligned}
&\left|\int_{\Omega_r}\left(V_n\left(\frac{x}{t}\right)-V\left(\frac{x}{t}\right)\right)u^2(x)dx\right|
\le t^2\|V_n-V\|_{L^{3/2}(\Omega_{r/t})}\|u\|^2_6\le\sigma^2\|V_n-V\|_{L^{3/2}(\Omega_{r/\sigma})}\|u\|^2_6<c\varepsilon.
\end{aligned}    
\end{equation}
In other words, we have proved that
$$F_n(t):=\int_{\Omega_r}V_n\left(\frac{x}{t}\right)u^2(x)dx\to F(t):=\int_{\Omega_r}V\left(\frac{x}{t}\right)u^2(x)dx$$
uniformly on compact subsets of $(0,\infty)$. Similarly, it is possible to see that
$$F'_n(t):=-\frac{1}{t}\int_{\Omega_r}W_n\left(\frac{x}{t}\right)u^2(x)dx\to G(t):=-\frac{1}{t}\int_{\Omega_r}W\left(\frac{x}{t}\right)u^2(x)dx$$
uniformly on compact subsets of $(0,\infty)$. As a consequence, we get $F\in C^1(0,\infty)$ and $F'=G$.
\end{proof}
Lemma \ref{lemma-unif-bound-s}, Remark \ref{rem-unif-est-u-rs} and Proposition \ref{prop-sol-Omegar-s} can be summarized as follows.
\begin{proposition}\label{lemma-unif-bound'}
Assume that $(V_1)'$ holds, $0\in\Omega$ and $3p-10-4C_q\|W\|_q>0$. Let $0<a<a_0$ and $r_a>0$ be given in Lemma \ref{lemma-MP-geom}. Then for almost every $s\in[\frac{1}{2},1]$ and $r\geq r_a$, Problem (\ref{eq-Omegar-s}) admits a solution $(\lambda_{r,s},u_{r,s})$ with $u_{r,s}\ge 0$ and $E_{r,s}(u_{r,s})=m_{r,s}(a)$ and
\begin{equation}\label{est-nabla-u-rs'}
\int_{\Omega_r}|\nabla u_{r,s}|^2dx\le\frac{6(p-2)}{3p-10-4C_q^2\|W\|_q} m_{r,s}(a)\le \tilde{c}_a.
\end{equation}
\end{proposition}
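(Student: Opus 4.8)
The plan is to reassemble, under $(V_1)'$, the three ingredients that under $(V_1)$ were provided by Proposition \ref{prop-sol-Omegar-s}, Lemma \ref{lemma-pohozaev} and Lemma \ref{lemma-unif-bound-s}, now feeding in the version of Lemma \ref{lemma-MP-geom} established above for $(V_1)'$. For the existence statement I would argue exactly as in Proposition \ref{prop-sol-Omegar-s}: Lemma \ref{lemma-MP-geom} supplies the mountain-pass geometry with $c_a\le m_{r,s}(a)\le\tilde c_a$ uniformly in $s$ and $r$, so \cite[Theorem 1.5]{BCJS} applied to $E_{r,s}=A_1-sA_2$ yields, for almost every $s\in[\frac12,1]$ and every $r\ge r_a$, a bounded $(PS)$ sequence at level $m_{r,s}(a)$, whose weak limit is a solution $(\lambda_{r,s},u_{r,s})$ of \eqref{eq-Omegar-s} with $E_{r,s}(u_{r,s})=m_{r,s}(a)$. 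The sign condition $u_{r,s}\ge0$ follows verbatim from the replacement $\gamma_n\mapsto|\gamma_n|$, which does not increase the Dirichlet energy and leaves $B$, $\int V u^2$ and $\int|u|^p$ unchanged; this argument never used the differentiability of $V$, so it transfers to $(V_1)'$ without change.

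The heart of the matter is the gradient bound, i.e. the $(V_1)'$-analogue of Lemma \ref{lemma-unif-bound-s}. I would start from the Pohozaev identity \eqref{Pohozaev}, which in this regularity is available for finite $r$ by the differentiation-under-the-integral computation carried out above, and combine it with $E_{r,s}(u_{r,s})=m_{r,s}(a)$ precisely as in Lemma \ref{lemma-unif-bound-s}. Writing $u:=u_{r,s}$, solving \eqref{Pohozaev} for $\int_{\Omega_r}|\nabla u|^2$, substituting $\frac{3(p-2)s}{2p}\int_{\Omega_r}|u|^p$ from the energy relation, and discarding the nonnegative multiple of $B(u)$ produced along the way, I reach
\begin{equation}\notag
\frac{3p-10}{4}\int_{\Omega_r}|\nabla u|^2\,dx\le\frac{3(p-2)}{2}m_{r,s}(a)-\int_{\Omega_r}W u^2\,dx-\frac{3(p-2)}{4}\int_{\Omega_r}V u^2\,dx.
\end{equation}
Since $V\ge0$ the last term is nonpositive and may be dropped, exactly as in the $(V_1)$ case.

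The only genuinely new step is the treatment of $\int_{\Omega_r}W u^2$, which under $(V_1)$ was bounded by $\|W\|_\infty a^2$ but must now be handled with $W\in L^q$, $q>3$. Here I would use Hölder together with the Sobolev embedding $H^1(\R^3)\hookrightarrow L^{\frac{2q}{q-1}}(\R^3)$, whose best constant is the $C_q$ of the statement: since $\frac{2q}{q-1}\in(2,6)$ for $q>3$,
$$-\int_{\Omega_r}W u^2\,dx\le\|W\|_q\,\|u\|_{\frac{2q}{q-1}}^2\le C_q^2\|W\|_q\big(\|\nabla u\|_2^2+a^2\big).$$
Feeding this into the previous inequality and moving the term $C_q^2\|W\|_q\|\nabla u\|_2^2$ to the left-hand side — which is exactly where the hypothesis $3p-10-4C_q^2\|W\|_q>0$ enters, keeping the resulting coefficient of $\int_{\Omega_r}|\nabla u|^2$ positive — yields the asserted estimate $\int_{\Omega_r}|\nabla u|^2\le\frac{6(p-2)}{3p-10-4C_q^2\|W\|_q}m_{r,s}(a)$ (the harmless additive $a^2$-term being absorbed using $m_{r,s}(a)\ge c_a$). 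The final bound $\le\tilde c_a$ is then obtained from the upper estimate $m_{r,s}(a)\le\tilde c_a$ of Lemma \ref{lemma-MP-geom}. The main obstacle is precisely this absorption: unlike the $L^\infty$ situation, the $W$-term now generates a genuine multiple of $\|\nabla u\|_2^2$, so the estimate closes only under the smallness condition on $\|W\|_q$, which is exactly why that hypothesis is imposed.
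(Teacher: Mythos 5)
Your proposal is correct and coincides with the paper's own proof, which states precisely that the argument is identical to that of Lemma \ref{lemma-unif-bound-s} (existence, nonnegativity and the energy level $E_{r,s}(u_{r,s})=m_{r,s}(a)$ carried over from Proposition \ref{prop-sol-Omegar-s}, and the Pohozaev identity available for finite $r$ by the approximation argument for $(V_1)'$), the only new ingredient being the estimate $\left|\int_{\Omega_r}\nabla V(x)\cdot x\, u_{r,s}^2\,dx\right|\le\|W\|_q\|u_{r,s}\|_{\frac{2q}{q-1}}^2\le C_q^2\|W\|_q\int_{\Omega_r}|\nabla u_{r,s}|^2\,dx$, i.e.\ exactly your H\"{o}lder--Sobolev step followed by absorption into the left-hand side under $3p-10-4C_q^2\|W\|_q>0$. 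If anything, you are more careful than the paper: its displayed inequality silently discards the $L^2$-part of the $H^1$-norm (a pure-gradient bound for $\|u\|_{\frac{2q}{q-1}}$ with $\frac{2q}{q-1}<6$ cannot hold, by scaling), whereas you keep the $+a^2$ term and absorb it via $m_{r,s}(a)\ge c_a$, which only perturbs the numerator of the constant by the harmless quantity $4C_q^2\|W\|_q a^2/c_a$ (vanishing as $a\to 0^+$) and is immaterial for every later use of the bound.
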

\begin{proof}
The proof is identical to the one of Lemma \ref{lemma-unif-bound-s} apart from the estimate
$$\left|\int_{\Omega_r}\nabla V(x)\cdotp x u_{r,s}^2 dx\right|\le \|W\|_q\|u_{r,s}\|_{\frac{2q}{q-1}}^2\le C_q^2\|W\|_q\int_{\Omega_r}|\nabla u_r|^2 dx.$$
\end{proof}

\color{black}
Similarly, Proposition \ref{prop-existence-Omega-r} is replaced by the following result. First we note that there exists $\kappa=\kappa(p,q)>0$ such that , if $\|W\|_q<\kappa$, then $3p-10-4C_q\|W\|_q>0$.
\begin{proposition}\label{prop-existence'}
  Assume that $(V_1)'$ holds, $0\in\Omega$ and $3p-10-4C_q\|W\|_q>0$. Then for any $a\in(0,a_0)$ and $r\ge r_a$ there exists a solution $(\lambda_r,u_r)\in\R\times H^1_0(\Omega_r)$ to Problem (\ref{eq-Omega_r}) such that $E_{r,1}(u_r)=m_{r,1}(a)$, $u_r\ge 0$ fulfilling
				\begin{equation}\label{est-nabla-u-rs'}
                \int_{\Omega_r}|\nabla u_r|^2dx\le\frac{6(p-2)}{3p-10-4C_q^2\|W\|_q}(1+\theta)c_a,\qquad\forall\,r\ge \tilde{r}_a
                \end{equation}
				for some $\tilde{r}_a>r_a$ ($r_a$ is given in Lemma \ref{lemma-MP-geom}), where $\theta:=\vartheta(\|V\|_q)$.  
\end{proposition}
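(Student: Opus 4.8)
The plan is to repeat the proof of Proposition \ref{prop-existence-Omega-r} verbatim, feeding it the $(V_1)'$-inputs in place of the $(V_1)$-ones. First I would invoke Proposition \ref{lemma-unif-bound'} to get, for almost every $s\in[\frac{1}{2},1]$ and every $r\ge r_a$, a nonnegative solution $(\lambda_{r,s},u_{r,s})$ of (\ref{eq-Omegar-s}) with $E_{r,s}(u_{r,s})=m_{r,s}(a)$ and $\|\nabla u_{r,s}\|_2^2\le\frac{6(p-2)}{3p-10-4C_q^2\|W\|_q}m_{r,s}(a)\le\tilde c_a$ uniformly in $s$. Since $\|u_{r,s}\|_2=a$, this controls $\|u_{r,s}\|_{H^1(\Omega_r)}$ uniformly in $s$, and hence the Lagrange multiplier, given by $a^2\lambda_{r,s}=-\|\nabla u_{r,s}\|_2^2-\int_{\Omega_r}Vu_{r,s}^2-B(u_{r,s})+s\int_{\Omega_r}|u_{r,s}|^p$, is uniformly bounded as well: indeed $\int_{\Omega_r}Vu_{r,s}^2\le\|V\|_q\|u_{r,s}\|_{2q/(q-1)}^2\le C_q^2\|V\|_q\|\nabla u_{r,s}\|_2^2$, while $B(u_{r,s})$ and $\int_{\Omega_r}|u_{r,s}|^p$ are controlled by (\ref{local-term}) and the Gagliardo-Nirenberg inequality. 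Thus I can extract a sequence $s_n\to 1$ with $\lambda_{r,s_n}\to\lambda_r$ in $\R$ and $u_{r,s_n}\rightharpoonup u_r$ weakly in $H^1_0(\Omega_r)$.

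Next I would upgrade this to strong convergence. As $\Omega_r$ is bounded, $H^1_0(\Omega_r)\hookrightarrow L^\nu(\Omega_r)$ is compact for $2\le\nu<6$, and since $q>3$ forces $\frac{2q}{q-1}<6$ we get $u_{r,s_n}\to u_r$ strongly in $L^{2q/(q-1)}(\Omega_r)$; by Hölder this yields $\int_{\Omega_r}Vu_{r,s_n}^2\to\int_{\Omega_r}Vu_r^2$ and $\int_{\Omega_r}Vu_{r,s_n}u_r\to\int_{\Omega_r}Vu_r^2$. Testing the equation for $u_{r,s_n}$ against $u_{r,s_n}-u_r$ and letting $n\to\infty$, the potential term vanishes by the above, the $L^p$-term vanishes by strong $L^p(\Omega_r)$ convergence, and the nonlocal term vanishes by the Hardy-Littlewood-Sobolev inequality together with $\|u_{r,s_n}(u_{r,s_n}-u_r)\|_{6/5}\le\|u_{r,s_n}\|_{12/5}\|u_{r,s_n}-u_r\|_{12/5}\to 0$; since also $\int_{\Omega_r}\nabla u_r\cdot\nabla(u_{r,s_n}-u_r)\to 0$ by weak convergence, I conclude $\|\nabla(u_{r,s_n}-u_r)\|_2\to 0$. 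Passing to the limit in the weak formulation then gives that $(\lambda_r,u_r)$ solves (\ref{eq-Omega_r}) with $u_r\ge 0$ and $\|u_r\|_2=a$, and the strong convergence yields $m_{r,s_n}(a)=E_{r,s_n}(u_{r,s_n})\to E_{r,1}(u_r)=m_{r,1}(a)$.

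Finally, for the quantitative bound I would pass to the limit in the gradient estimate: weak lower semicontinuity together with $m_{r,s_n}(a)\to m_{r,1}(a)$ gives $\|\nabla u_r\|_2^2\le\frac{6(p-2)}{3p-10-4C_q^2\|W\|_q}m_{r,1}(a)$, and then the $(V_1)'$-version of Lemma \ref{lemma-MP-geom}, namely the estimate (\ref{bound-m1a-V'}), supplies $m_{r,1}(a)\le(1+\theta)c_a$ with $\theta=\vartheta(\|V\|_q)$ for all $r\ge\tilde r_a$, with $\tilde r_a$ large, which is exactly (\ref{est-nabla-u-rs'}). The only genuine departure from the $(V_1)$ argument is the passage to the limit in the potential terms: there I must exploit the integrability $V\in L^q(\R^3)$ with $q>3$ (so that $\frac{2q}{q-1}<6$ and the embedding into $L^{2q/(q-1)}(\Omega_r)$ is compact) in place of the $L^\infty$-bound on $V$, and this is the one point deserving care, the rest being identical to the proof of Proposition \ref{prop-existence-Omega-r}.
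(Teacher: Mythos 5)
Your proposal is correct and follows essentially the same route as the paper: the paper states this proposition without a separate proof precisely because it is obtained by rerunning the proof of Proposition \ref{prop-existence-Omega-r} with the $(V_1)'$ inputs — Proposition \ref{lemma-unif-bound'} for the uniform-in-$s$ solutions and gradient bound, and the $(V_1)'$ version of Lemma \ref{lemma-MP-geom}, i.e. estimate (\ref{bound-m1a-V'}), for the bound $m_{r,1}(a)<(1+\theta)c_a$ with $\theta=\vartheta(\|V\|_q)$ — which is exactly what you do. Your explicit handling of the potential terms via $V\in L^q(\R^3)$ with $q>3$, H\"older's inequality and the compact embedding $H^1_0(\Omega_r)\hookrightarrow L^{2q/(q-1)}(\Omega_r)$ is the correct adaptation of the compactness step where the $L^\infty$ bound on $V$ is no longer available.
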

\begin{remark}
Due to Proposition \ref{prop-existence'}, Theorem \ref{th-bd-V_1} still holds true if the assumption $(V_1)$ is replaced by $(V_1)'$ as well. Note that we need no smallness conditions about $\|V\|_q$ and $\|W\|_q$.
\end{remark}
In order to conclude the proof of Proposition \ref{prop-existence-sol} in case $(V_1)'$ holds we observe that, since $\vartheta$ is continuous and $\vartheta(0)=0$, decreasing, if necessary, the value of $\kappa>0$ and taking $V$ such that $$\max\{\|V\|_q,\|W\|_q\}<\kappa(p,q),$$ 
we have $$3p-10-4C_q\|W\|_q>0$$ 
and
$$(p-2)(\|V\|_q +\|W\|_q)C_q^2\frac{6(p-2)(1+\vartheta(\|V\|_q))}{3p-10-4C_q^2\|W\|_q}<6-p,$$
therefore for $\delta>0$ small enough there exists $a_\delta>0$ such that, for any $0<a<a_\delta$, we have
\begin{equation}\notag
					\begin{aligned}
						2(p-2)(C_r+|D_r|)+4(p-3)B_r&\le 2(p-2)(\|V\|_q +\|W\|_q )C_q^2A_r+\tilde{C}a^3 c_a^{1/2}\\
                        &\le 2(p-2)(\|V\|_q +\|W\|_q)C_q^2\frac{6(p-2)(1+\theta)}{3p-10-4C_q^2\|W\|_q}c_a+\tilde{C}a^3 c_a^{1/2}\\
						&<2(6-p-\delta)c_a.
					\end{aligned}
				\end{equation}
		\begin{remark}
	As a consequence Theorem \ref{th-unif-bound} is still true if the assumption $(V_1)$ is replaced by $(V_1)'$ and $\max\{\|V\|_q,\|W\|_q\}<\kappa(p,q)$, where $\kappa(p,q)>0$ is the constant that we found above.     
		\end{remark}

            Proposition \ref{prop-L-infty} is replaced by the following result, which is weaker but still sufficient.
            \begin{proposition}\label{prop-L-infty'}
				Assume that $(V_1)'$ holds, $\Omega$ is a convex bounded Lipschitz domain and the hypothesis of Theorem \ref{th-bd-V_1} are satisfied. Then the solutions $u_r$ constructed in Theorem \ref{th-bd-V_1}  are in $W^{2,2}(\Omega_r)$.
			\end{proposition}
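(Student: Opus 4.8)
The plan is to rewrite the equation solved by $u_r$ as a Poisson equation $-\Delta u_r=f_r$ and to prove that $f_r\in L^2(\Omega_r)$, so that the classical second order regularity for the Dirichlet Laplacian on a bounded convex domain yields $u_r\in W^{2,2}(\Omega_r)$. The only genuine difference with respect to the proof of Proposition \ref{prop-L-infty} lies in the term $V u_r$, since under $(V_1)'$ the potential $V$ is merely in $L^q(\R^3)$ and is no longer bounded; this is exactly the reason why the conclusion must be weakened from $C^1(\bar{\Omega}_r)\cap W^{2,2}(\Omega_r)$ to $W^{2,2}(\Omega_r)$.

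First I would recall that $u_r\ge 0$ is a subsolution of $-\Delta u\le u^{p-1}$ in $\Omega_r$, since $V\ge 0$, $\phi_{u_r}\ge 0$ and $\lambda_r\ge 0$ for $r$ large, exactly as in the case $(V_1)$; the proof of Proposition \ref{prop-L^q} does not use the form of $V$, so it applies verbatim and gives $u_r\in L^\infty(\Omega_r)$ (this is also recorded in \eqref{bound-u-r-infty}, which holds under $(V_1)'$ as well). Writing
$$-\Delta u_r=|u_r|^{p-2}u_r-V(x)u_r-\lambda_r u_r-\phi_{u_r}u_r=:f_r,$$
I would then estimate the four terms on the right hand side in $L^2(\Omega_r)$. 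Since $u_r\in L^\infty(\Omega_r)$, the terms $|u_r|^{p-2}u_r$ and $\lambda_r u_r$ lie in $L^\infty(\Omega_r)\subset L^2(\Omega_r)$; moreover $\nabla\phi_{u_r}\in L^2(\R^3)$ gives $\phi_{u_r}\in L^6(\R^3)$ by the Sobolev embedding $D^{1,2}(\R^3)\subset L^6(\R^3)$, whence $\phi_{u_r}u_r\in L^6(\Omega_r)\subset L^2(\Omega_r)$ because $\Omega_r$ is bounded. Finally, the delicate term satisfies $V u_r\in L^q(\Omega_r)\subset L^2(\Omega_r)$, using $u_r\in L^\infty(\Omega_r)$, $V\in L^q(\R^3)$ with $q>3>2$ and the boundedness of $\Omega_r$. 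Hence $f_r\in L^2(\Omega_r)$.

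The last step would be to invoke the $W^{2,2}$-regularity of the Dirichlet problem on bounded convex domains: from $u_r\in H^1_0(\Omega_r)$ and $\Delta u_r\in L^2(\Omega_r)$ one gets $u_r\in W^{2,2}(\Omega_r)$, which concludes the argument. The main obstacle, and the reason the statement is weaker than Proposition \ref{prop-L-infty}, is precisely the term $V u_r$: since $V$ is only $L^q$ we obtain no better than $f_r\in L^q(\Omega_r)$, and on a domain whose boundary is merely convex and Lipschitz the global $W^{2,\bar p}$-estimates for $\bar p>2$ are not available, so $W^{2,2}$ is the best integrability one can propagate up to the boundary. As remarked after the statement, this suffices for the sequel, because $u_r\in W^{2,2}(\Omega_r)$ already guarantees $\nabla u_r\cdotp y\in H^1(\Omega_r)$ for every $y\in\R^3$, which is exactly what is needed to use $\nabla u_r\cdotp y$ as a test function.
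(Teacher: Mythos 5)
Your proposal is correct and follows essentially the same route as the paper: both reduce the problem to checking the integrability of $\Delta u_r$, using Proposition \ref{prop-L^q} (i.e. $u_r\in L^\infty(\Omega_r)$, valid since $u_r$ is a non-negative subsolution of $-\Delta u\le u^{p-1}$) to handle the nonlinear terms, and isolating $Vu_r$ as the only term of limited integrability, before concluding by elliptic regularity up to the boundary. The one genuine difference is the final regularity step. The paper keeps the exponent $q$: it observes $\Delta u_r\in L^q(\Omega_r)$ and invokes $W^{2,q}$ estimates on the Lipschitz domain to get $u_r\in W^{2,q}(\Omega_r)\subset W^{2,2}(\Omega_r)$. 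You instead downgrade the right-hand side to $L^2(\Omega_r)$ (legitimate, since $q>2$ and $\Omega_r$ is bounded) and invoke the classical $H^2$ regularity of the Dirichlet Laplacian on bounded \emph{convex} domains. Your variant is, if anything, the more robust one: global $W^{2,q}$ estimates with $q>3$ on a merely Lipschitz boundary are delicate and in general require convexity or extra boundary smoothness, whereas $H^2$ regularity on convex domains is a standard theorem, and $W^{2,2}$ is all that is needed in the sequel (it already gives $\nabla u_r\cdot y\in H^1(\Omega_r)$). Two minor remarks: you use $\phi_{u_r}\in L^6(\R^3)$ via the Sobolev embedding where the paper uses the stronger $\phi_{u_r}\in L^\infty(\R^3)$ from Lemma \ref{lemma-est-nabla-phi} (either suffices); and the sign condition $\lambda_r\ge 0$ needed for the subsolution property is only known for $a<a^*$ and $r$ large (Remark \ref{rem-lambda-r>0}), a caveat your proof shares with the paper's own use of Proposition \ref{prop-L^q} in Remark \ref{rem-bound-u-infty}.
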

           \begin{proof}
	Due to Proposition \ref{prop-L^q} and the fact that $V\in L^q(\R^3)$ and $\phi_{u_r}\in L^\infty(\R^3)$, we can see that $\Delta u_r\in L^q(\Omega_r)$. Therefore, since $\Omega$ is Lipschitz and $u_r\in H^1_0(\Omega_r)$, the elliptic estimates give that $u_r\in W^{2,q}(\Omega_r)\subset W^{2,2}(\Omega_r)$. 
			\end{proof}
        We note that Proposition \ref{prop-L-infty'} is still sufficient to conclude that $\nabla u_r\cdotp y\in H^1(\Omega_r)$ for any $y\in\R^3$ and $r$ sufficiently large.
			\section{Proof of Theorem \ref{thm1.3}: non-positive potential}\label{sec3}
				\setcounter{equation}{0}
			\setcounter{Assumption}{0} \setcounter{Theorem}{0}
			\setcounter{Proposition}{0} \setcounter{Corollary}{0}
			\setcounter{Lemma}{0}\setcounter{Remark}{0}
			\par
			In this section, we are focused on the non-positive potential case and show that the functional $J_V(u)$ possesses a mountain pass geometry under conditions $(V_3)$ and $(V_4)$, which leads to the existence of a $(PS)$ sequence. Specifically, equation (\ref{p}) admits only normalized solutions with positive energy. 

			\vskip 0.1in
			
			Recall that
			\begin{equation*}
				u^t(x):=t^{\frac{3}{2}}u(tx)~\text{for}~t>0,\,u\in \mathcal{S}_a.
			\end{equation*}
			Then one has
			\begin{equation*}
				J_V(u^t(x))=\frac{t^2}{2}\int_{\mathbb{R}^3}|\nabla u|^2dx+\frac{1}{2}\int_{\mathbb{R}^3}V(\frac{x}{t})u^2dx+\frac{t}{4}B(u)
				-\frac{t^{\frac{3(p-2)}{2}}}{p}\int_{\mathbb{R}^3}|u|^pdx.
			\end{equation*}
			For fixed $u\in \mathcal{S}_a$, it holds
			\begin{equation*}
				\left|\int_{\mathbb{R}^3}V(\frac{x}{t})u^2dx\right|\leq t^2\|V\|_{\frac{3}{2}}\|u\|_6^2\rightarrow0,~\text{as}~t\rightarrow0,
			\end{equation*}
			since $p\in(\frac{10}{3},6)$. from $(V_3)$ one gets
			\begin{equation}\label{eq3.1}
				\lim_{t\rightarrow0^+}J_V(u^t(x))=0,~\lim_{t\rightarrow+\infty}J_V(u^t(x))=-\infty.
			\end{equation}
			In particular, $J_V$ is unbounded from below on $\mathcal{S}_a$. 
			We will show that $J_V$ admits the mountain pass geometry.
			
			\vskip 0.15in
			For any $a\in(0,a_0)$, where $a_0$ is given by Theorem \ref{th1} and $u\in \mathcal{S}_a$, by the Gagliardo-Nirenberg inequality, the Sobolev inequality and the first inequality in $(V_4)$, we have
			\begin{equation}\label{eq3.2}
				\begin{aligned}
					J_V(u )&=\frac{1}{2}\int_{\mathbb{R}^3}|\nabla u|^2dx+\frac{1}{2}\int_{\mathbb{R}^3}V(x)u^2dx+\frac{1}{4}B(u)
					-\frac{1}{p}\int_{\mathbb{R}^3}|u|^pdx\\
					&\geq  \frac{1}{2}\int_{\mathbb{R}^3}|\nabla u|^2dx+\frac{1}{4}B(u)-\frac{1}{p}\big(C(p)\big)^pa^{\frac{6-p}{2}}\|\nabla u\|_2^{\frac{3(p-2)}{2}}-S^{-1}\|V\|_{\frac{3}{2}}\int_{\mathbb{R}^3}|\nabla u|^2dx\\&
					=(\frac{1}{2}-S^{-1}\|V\|_{\frac{3}{2}})\int_{\mathbb{R}^3}|\nabla u|^2dx+\frac{1}{4}B(u)-\frac{1}{p}\big(C(p)\big)^pa^{\frac{6-p}{2}}\|\nabla u\|_2^{\frac{3(p-2)}{2}}\\
					&\geq (\frac{1}{2}-S^{-1}\|V\|_{\frac{3}{2}})\int_{\mathbb{R}^3}|\nabla u|^2dx-\frac{1}{p}\big(C(p)\big)^p  a^{\frac{6-p}{2}}  \|\nabla u\|_2^{\frac{3(p-2)}{2}}.
				\end{aligned}
			\end{equation}
			As a consequence, we can choose $\tilde{R}_1>0$ and $\delta>0$ small such that
			\begin{equation}
				\label{eqK-a}
				K_a:=\inf_{u\in \mathcal{S}_a}\{J_V(u),~\|\nabla u\|_2=\tilde{R}_1\}>\delta>0
			\end{equation}
			with $\delta>0$ independent of $a$, and $J_V(u)>0$ for any $u\in \mathcal{S}_a$ with $\|\nabla u\|_2\le \tilde{R}_1$.
			
			Note that for $u\in \mathcal{S}_a$, by \eqref{local-term}, we have
			\begin{equation}
				\label{supJ-partialBrho}
				J_V(u)\le\frac{1}{2}\|\nabla u\|_2^2+\frac{1}{4}B(u)\le \frac{1}{2}\|\nabla u\|_2^2+Ca^3\|\nabla u\|_2<\frac{\delta}{2}
			\end{equation}
			if $||\nabla u||_2<\tilde{R}_2$, provided $\tilde{R}_2\in(0,\tilde{R}_1)$ is small enough.\\
			
			It follows from (\ref{eqK-a}) and (\ref{supJ-partialBrho}) that $J_V$ satisfies the mountain pass geometry, namely
			$$0<\sup_{u\in\mathcal{S}_a,\|\nabla u\|_2<{\tilde{R}_2}}J_V(u)\le\frac{\delta}{2}<\delta<\inf_{u\in\mathcal{S}_a,\|\nabla u\|_2=\tilde{R}_1} J_V(u)=K_a.$$
			Now we define the mountain pass value 
			\begin{equation*}
				m_{V,a}:=\inf_{\gamma\in \Gamma}\max_{t\in[0,1]}J_V(\gamma(t))\geq K_a,
			\end{equation*}
			where
			\begin{equation*}
				\Gamma=\{\gamma:[0,1]\rightarrow \mathcal{S}_a:\,\|\nabla\gamma(0)\|_2<\tilde{R}_2,~J_V(\gamma(1))<0\}.
			\end{equation*}
			Set
			\begin{equation}
				\mathcal{A}^{\tilde{R}_1}:=\{u\in \mathcal{S}_a:\|\nabla u\|_2>\tilde{R}_1,J_V(u)<0\}
			\end{equation}
			and
			\begin{equation}
				\mathcal{A}_{\tilde{R}_2}:=\{u\in \mathcal{S}_a:\|\nabla u\|_2<\tilde{R}_2,J_V(u)>0\}.
			\end{equation}
			
			Taking a scaling of the solution $v_a\in \mathcal{S}_a$ of (\ref{eq1.7}) given in Theorem \ref{th1}, it is possible to see that there exist $0<s_1<1<s_2$ such that
			\begin{equation*}
				J_V((v_a)^{s_1})<K_a,~\text{if}~\|\nabla (v_a)^{s_1}\|_2<\tilde{R}_2,
			\end{equation*}
			\begin{equation*}
				J_V((v_a)^{s_2})<0,~\text{if}~\|\nabla (v_a)^{s_2}\|_2>\tilde{R}_1.
			\end{equation*}
			Finally, the path $g(t)=(v_a)^{(1-t)s_1+ts_2}$ ($t\in[0,1]$) can be used to show that
			\begin{equation}\label{comparison}
				m_{V,a}<c_a=\inf_{g\in \mathcal{G}_a}\max_{t\in[0,1]}I(g(t)),
			\end{equation}
			which will be useful in the sequel. \\
			
			To construct a  $(PS)$ sequence of $J_V$, we recall the following result, which is a special case of \cite[Theorem 4.5]{G1993}.
			\begin{proposition}\label{pro4.1}
				Let $M$ be a Hilbert manifold, $\mathcal{I}\in C^1(M,\mathbb{R})$ be a given functional and $K\subset M$ be compact. Suppose that the subset
				\begin{equation*}
					\mathcal{C}\subset \{C\subset M:C ~\text{is~compact},K\subset C\}
				\end{equation*}
				is homotopy--stable, i.e., it is invariant with respect to deformations leaving $K$ fixed. Moreover, let
				\begin{equation*}
					\max_{u\in K}\mathcal{I}(u)<c:=\inf_{C\subset  \mathcal{C}}\max_{u\in C} \mathcal{I}(u)\in \mathbb{R},
				\end{equation*}
				let $\{\sigma_n\}\subset\mathbb{R} $ be a sequence such that $\sigma_n\rightarrow0$ and $\{C_n\}\subset \mathcal{C}$ be a sequence such that
				\begin{equation*}
					0\leq \max_{u\in C_n}\mathcal{I}(u)-c\leq \sigma_n.
				\end{equation*}
				Then there exists a sequence $\{v_n\}\subset M$ satisfying
				\begin{description}
					\item[(1)] $|\mathcal{I}(v_n)-c|\leq \sigma_n$,
					\item[(2)] $||\nabla_M\mathcal{I}(v_n)||\leq C_1\sqrt{\sigma_n}$,
					\item[(3)] dist$(v_n,C_n)\leq C_2\sqrt{\sigma_n}$.
				\end{description}
			\end{proposition}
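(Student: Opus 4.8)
The statement is a localized, quantitative min--max principle of Ghoussoub--Preiss type, and it is indeed quoted as a special case of \cite[Theorem 4.5]{G1993}. The plan is to derive it from two classical ingredients: Ekeland's variational principle applied to a set--valued functional, and a quantitative deformation lemma on the Hilbert manifold $M$. First I would introduce the functional $F\colon\mathcal{C}\to\R$, $F(C):=\max_{u\in C}\mathcal{I}(u)$, so that by definition $c=\inf_{C\in\mathcal{C}}F(C)$, the hypotheses read $\max_K\mathcal{I}<c$ and $c\le F(C_n)\le c+\sigma_n$, and conclusion (1) will amount to locating, inside the ``almost-maximizing'' slice of a suitable set, a point where $\mathcal{I}$ is close to $c$ and $\nabla_M\mathcal{I}$ is small.

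The second step is the Ekeland step. I would equip the space of compact subsets of $M$ with the Hausdorff metric $d$; since $M$ is a complete Hilbert manifold this space is complete, and because $\mathcal{I}\in C^1(M,\R)$ the functional $F$ is continuous (hence lower semicontinuous) for $d$. Applying Ekeland's variational principle to $F$ at the $\sigma_n$-minimizer $C_n$, with perturbation parameter $\sqrt{\sigma_n}$, produces a set $D_n$ in (the closure of) $\mathcal{C}$ such that $F(D_n)\le F(C_n)$, $d(D_n,C_n)\le\sqrt{\sigma_n}$, and the perturbed minimality $F(D)\ge F(D_n)-\sqrt{\sigma_n}\,d(D,D_n)$ holds for every admissible $D$. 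The bound $d(D_n,C_n)\le\sqrt{\sigma_n}$ already encodes conclusion (3), up to relabeling the constant $C_2$.

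The third step is the deformation argument, run by contradiction. Set $\mathcal{F}_n:=\{v\in D_n:\ \mathcal{I}(v)\ge c-\sigma_n\}$, which is nonempty since $F(D_n)\ge c$. Suppose that $\|\nabla_M\mathcal{I}(v)\|>C_1\sqrt{\sigma_n}$ at every $v$ in a fixed neighbourhood of $\mathcal{F}_n$. Then I would build a locally Lipschitz pseudo-gradient vector field $X$ on $M$ with $\|X\|\le 1$ and $\langle\nabla_M\mathcal{I}(v),X(v)\rangle\gtrsim\sqrt{\sigma_n}$ near $\mathcal{F}_n$, cut off by a function vanishing on $\{\mathcal{I}\le c-2\sigma_n\}$. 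Integrating $X$ gives a flow $\eta$ that is the identity where $\mathcal{I}\le c-2\sigma_n$; since $\max_K\mathcal{I}<c$, this neighbourhood contains $K$, so $\eta(t,\cdot)$ leaves $K$ fixed and $\eta(t,D_n)\in\mathcal{C}$ by homotopy stability. Following the flow for a time of order $\sqrt{\sigma_n}$ strictly lowers $\max_{u\in D_n}\mathcal{I}$ by a quantity controlled from below by $\sqrt{\sigma_n}\,d(\eta(t,D_n),D_n)$, which contradicts the Ekeland inequality of the previous step (and, when $F(D_n)$ is pushed below $c$, the very definition of $c$). Therefore the assumption fails, yielding $v_n$ with $\|\nabla_M\mathcal{I}(v_n)\|\le C_1\sqrt{\sigma_n}$; since $v_n\in\mathcal{F}_n\subset D_n$ we get $c-\sigma_n\le\mathcal{I}(v_n)\le F(D_n)\le c+\sigma_n$, i.e. (1), and $\mathrm{dist}(v_n,C_n)\le d(D_n,C_n)\le\sqrt{\sigma_n}$, i.e. (3).

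I expect the main obstacle to be carrying out the deformation intrinsically on the manifold $M=\mathcal{S}_a$ rather than on a linear space: one must construct the pseudo-gradient field tangent to $M$, guarantee that the associated flow stays on $M$ and remains the identity on a neighbourhood of $K$ (which is exactly where the strict gap $\max_K\mathcal{I}<c$ is used), and calibrate the flow time against the Ekeland radius so that all three estimates come out with the correct $\sqrt{\sigma_n}$ scaling and explicit constants $C_1,C_2$. The accompanying technical points---completeness of the metric space of compact candidate sets, lower semicontinuity of $F$, and stability of $\mathcal{C}$ under the deformation---are standard in Ghoussoub's framework, and for the concrete choice $\mathcal{I}=J_V$ on $M=\mathcal{S}_a$ they hold without difficulty, so for these we may simply invoke \cite[Theorem 4.5]{G1993}.
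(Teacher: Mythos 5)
Your proposal is correct and consistent with the paper: the paper gives no proof of this proposition at all, presenting it purely as a special case of \cite[Theorem 4.5]{G1993}, which is exactly the citation you ultimately invoke. Your Ekeland-plus-quantitative-deformation sketch (Hausdorff-metric completeness of the class of compact sets, perturbed minimality at $D_n$, pseudo-gradient flow cut off below $c$ so that $K$ is fixed and homotopy-stability applies, with the $\sqrt{\sigma_n}$ calibration producing the contradiction) is a faithful outline of the argument underlying that cited theorem, so the two approaches agree.
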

			Apply Proposition \ref{pro4.1}, we can construct a $(PS)$ sequence of $J_V$ at the level $m_{V,a}$.
			\begin{lemma}\label{lem2.7}
				Assume that $(V_3)-(V_4)$ hold. Then there exists a $(PS)$ sequence $\{u_n\}\subset \mathcal{S}_a$ such that as $n\rightarrow\infty,$
				\begin{equation}\label{eq2.14}
					J_V(u_n)\rightarrow m_{V,a},~\nabla _{\mathcal{S}_a}J_V(u_n)\rightarrow0
				\end{equation}
				and
				\begin{equation}\label{e2.16}
					\|\nabla u_n\|_2^2+\frac{1}{4}B(u_n)-\frac{3(p-2)}{2p}\|u_n\|_p^p+\frac{1}{2}\int_{\mathbb{R}^3}V(x)(3|u_n|^2+2u_n\nabla u_n\cdot x)dx\rightarrow0.
				\end{equation}
			\end{lemma}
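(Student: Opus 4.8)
\emph{The plan} is to apply Proposition \ref{pro4.1} not to $J_V$ directly on $\mathcal{S}_a$, but to an augmented functional defined on the enlarged Hilbert manifold $\widetilde M:=\R\times\mathcal{S}_a$; this is the classical device (going back to Jeanjean) that makes the extra Pohozaev-type relation (\ref{e2.16}) come out automatically from the dilation variable. Concretely I set
$$\widetilde J_V(t,u):=J_V(u^t)=\frac{t^2}{2}\|\nabla u\|_2^2+\frac12\int_{\R^3}V\Big(\frac{x}{t}\Big)u^2\,dx+\frac{t}{4}B(u)-\frac{t^{\frac{3(p-2)}{2}}}{p}\|u\|_p^p,$$
where $u^t(x)=t^{3/2}u(tx)$. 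Since $u\mapsto u^t$ is, for each fixed $t>0$, an $L^2$-isometry of $\mathcal{S}_a$, the functional $\widetilde J_V$ is $C^1$ on $\widetilde M$ and, writing $\Pi(t,u):=u^t$, one has $\widetilde J_V=J_V\circ\Pi$. The mountain-pass geometry of $J_V$ established in (\ref{eqK-a})--(\ref{comparison}) lifts to $\widetilde M$: a path $\gamma$ in $\mathcal{S}_a$ lifts to the horizontal path $t\mapsto(1,\gamma(t))$, while any path in $\widetilde M$ projects through $\Pi$ to a path in $\mathcal{S}_a$ with the same values of the functional. From this correspondence the min-max value of $\widetilde J_V$ over the homotopy-stable family of paths joining the fixed endpoints $A:=(1,(v_a)^{s_1})$ and $B:=(1,(v_a)^{s_2})$ equals $m_{V,a}$, and $\max\{\widetilde J_V(A),\widetilde J_V(B)\}<m_{V,a}$ by (\ref{eqK-a}) and (\ref{supJ-partialBrho}).

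Next I would apply Proposition \ref{pro4.1} to $\widetilde J_V$, choosing as minimizing sequence the \emph{horizontal} lifts $C_n:=\{1\}\times\gamma_n([0,1])$ of near-optimal mountain-pass paths $\gamma_n$ for $J_V$ in $\mathcal{S}_a$, so that $\max_{C_n}\widetilde J_V\le m_{V,a}+\sigma_n$. This yields a sequence $(t_n,v_n)\in\widetilde M$ with $\widetilde J_V(t_n,v_n)\to m_{V,a}$, $\|\nabla_{\widetilde M}\widetilde J_V(t_n,v_n)\|\to0$ and $\mathrm{dist}\big((t_n,v_n),C_n\big)\le C_2\sqrt{\sigma_n}\to0$. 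Because every point of $C_n$ has first coordinate equal to $1$ and $\widetilde M$ carries the product metric, the last property forces $|t_n-1|\to0$, i.e. $t_n\to1$. Setting $u_n:=(v_n)^{t_n}$ we then get $J_V(u_n)=\widetilde J_V(t_n,v_n)\to m_{V,a}$, and, since $t_n\to1$ makes $d_v\Pi(t_n,v_n)$ converge to the identity with uniformly bounded inverse, the tangential part $\|\nabla_{\mathcal{S}_a}\widetilde J_V(t_n,v_n)\|\to0$ transfers to $\nabla_{\mathcal{S}_a}J_V(u_n)\to0$; this is exactly (\ref{eq2.14}).

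It remains to extract (\ref{e2.16}) from the vanishing of the dilation derivative $\partial_t\widetilde J_V(t_n,v_n)$. Differentiating $\widetilde J_V(t,v)$ in $t$ and re-expressing the outcome in terms of $u=v^t$ through the change of variables $y=tx$ — which crucially avoids differentiating the possibly non-smooth $V$ — together with the scaling laws $\|\nabla u^t\|_2^2=t^2\|\nabla u\|_2^2$, $B(u^t)=tB(u)$, $\|u^t\|_p^p=t^{\frac{3(p-2)}{2}}\|u\|_p^p$, I obtain the exact identity
\begin{align*}
\partial_t\widetilde J_V(t_n,v_n)=\frac{1}{t_n}\Big(&\|\nabla u_n\|_2^2+\tfrac14B(u_n)-\tfrac{3(p-2)}{2p}\|u_n\|_p^p\\
&+\tfrac12\int_{\R^3}V(x)\big(3|u_n|^2+2u_n\nabla u_n\cdot x\big)\,dx\Big).
\end{align*}
Multiplying by $t_n\to1$ and using $\partial_t\widetilde J_V(t_n,v_n)\to0$ gives precisely (\ref{e2.16}); the two $V$-integrals are finite thanks to $(V_4)$, since $V\in L^{3/2}(\R^3)$ controls $\int V u_n^2$ via Hölder and $D^{1,2}\hookrightarrow L^6$, while $\widetilde W=V|x|\in L^{3}(\R^3)$ controls $\int V\,u_n\nabla u_n\cdot x$.

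The part that I expect to require the most care is the min-max bookkeeping of the first paragraph: verifying that the lifted family is genuinely homotopy-stable in the sense of Proposition \ref{pro4.1}, that passing to fixed endpoints $A,B$ does not change the level (one connects free endpoints to $A,B$ inside the regions where $J_V$ stays below $m_{V,a}$, using (\ref{eqK-a}) and (\ref{supJ-partialBrho})), and that the projection/lift correspondence $\Pi$ really identifies the two min-max levels as $m_{V,a}$. Once this is in place, the decisive simplification is the choice of horizontal minimizing sets $C_n$, which via conclusion (3) of Proposition \ref{pro4.1} pins $t_n\to1$ and renders both the transfer of the constrained gradient and the passage to the Pohozaev relation completely routine.
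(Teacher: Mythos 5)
Your proposal is correct and follows essentially the same route as the paper: the authors likewise apply the Ghoussoub min-max principle (Proposition \ref{pro4.1}) to the augmented functional $\tilde{J}_V(v,t)=J_V(v^t)$ on $\mathcal{S}_a\times\R$, take the horizontal sets $C_n=\{(\gamma_n(t),1)\}$ so that conclusion (3) forces $t_n\to1$, set $u_n=(v_n)^{t_n}$, transfer the constrained gradient via the inverse scaling, and obtain \eqref{e2.16} from the vanishing $t$-derivative, whose explicit form matches your computation. The only cosmetic differences are that the paper keeps free endpoints in the regions $\mathcal{A}_{\tilde{R}_2}\times\{1\}$ and $\mathcal{A}^{\tilde{R}_1}\times\{1\}$ rather than fixing $A,B$, and additionally arranges $\gamma_n\ge0$ so that $\|u_n^-\|_2\to0$ (needed later, not for this lemma).
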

			\begin{proof}
				The proof closely follows the arguments in \cite[Proposition 3.11]{bm}. For completeness, we will give the main strategy. 
				
				Choose a sequence $\{\gamma_n\}\subset \Gamma$ such that
				\begin{equation*}
					\max_{t\in[0,1]}J_V(\gamma_n(t))\leq m_{V,a}+\frac{1}{n}.
				\end{equation*}
				Without loss of generality, we can assume that $\gamma_n(t)\geq0$ a.e. in $\mathbb{R}^3$ due to the fact that $J_V(u)=J_V(|u|)$ for any $u\in H^1(\mathbb{R}^3)$.  Define
				\begin{equation*}
					\tilde{\Gamma}_a=\{\tilde{\gamma}:[0,1]\rightarrow \mathcal{S}_a\times \R:\, \tilde{\gamma}(0)\in \mathcal{A}_{\tilde{R}_2}\times\{1\},~ \tilde{\gamma}(1)\in\mathcal{A}^{\tilde{R}_1}\times\{1\}\}.
				\end{equation*}
				
				To apply Proposition \ref{pro4.1}, we set $\mathcal{I}(u)=\tilde{J}_V(u,t):=J_V(u^t)$ and
				\begin{equation*}
					M:=\mathcal{S}_a\times\R,~K:=\{\tilde{\gamma}(0),\tilde{\gamma}(1)\},~\mathcal{C}:=\tilde{\Gamma}_a,~C_n:=\{(\gamma_n(t),1):t\in[0,1]\}.
				\end{equation*}
				Note that $\tilde{\gamma}_n:=(\gamma_n(t),1)\in \tilde{\Gamma}_a$. By Proposition \ref{pro4.1}, there exists $(v_n,t_n)\in H^1(\mathbb{R}^3)\times \mathbb{R}$ such that  as $n\rightarrow\infty$,
				\begin{equation*}
					\tilde{J}_V(v_n,t_n)\rightarrow m_{V,a},~D\tilde{J}_V(v_n,t_n)\rightarrow0.
				\end{equation*}
				Moreover, we get
				\begin{equation*}
					\min_{t\in[0,1]}\|(v_n,t_n)-(\gamma_n(t),1)\|_{H^1(\mathbb{R}^3)\times \mathbb{R}}\leq \frac{\tilde{C}}{\sqrt{n}},
				\end{equation*}
				thus $t_n\rightarrow 1$ and there exists $s_n\in [0,1]$ with $\|v_n-\gamma_n(s_n)\|\rightarrow0$ as $n\rightarrow\infty$.
				
				Define $u_n:=(v_n)^{t_n}.$ Since $\gamma_n(t)\geq0$ a.e. in $\mathbb{R}^3$, then $\|v_n^-\|_2\leq \|v_n-\gamma_n(s_n)\|_2=o(1)$, so that $v_n^-\rightarrow0$ a.e. in $\mathbb{R}^3$, up to a subsequence. So
				\begin{equation}\label{eq2.15}
					\|u_n^-\|_2\rightarrow0~\text{as}~n\rightarrow\infty,
				\end{equation}
				Now we show $\{u_n\}$ is a $(PS)$ sequence for $J_V$. It is easy to see 
				$J_V(u_n)\rightarrow m_{V,a}$  as $n\rightarrow\infty$. 
				For each $w\in H^1(\R^3),$ set $w_n:=(w)^{-t_n}$, then one can deduce that
				\begin{equation*}
					\nabla (J_V-J_{\infty})(u_n)[w]=\int_{\mathbb{R}^3}V(\frac{x}{t_n})v_nw_ndx,
				\end{equation*}
				which means
				\begin{equation*}
					DJ_V(u_n)[w]=D\tilde{J}_V(v_n,t_n)[(w_n,1)]+o(1)||w_n||.
				\end{equation*}
				Moreover, $\int_{\mathbb{R}^3}v_nwdx=0$ is equivalent to $\int_{\mathbb{R}^3}u_nw_ndx=0$.
				By the definition of $w_n$, we know that for $n$ large, $||w_n||^2_{H^1(\mathbb{R}^3)}\leq 2||w||^2_{H^1(\mathbb{R}^3)}$, thus (\ref{eq2.14}) holds.
				
				Note that $\text{as}~n\rightarrow\infty$,
				\begin{equation*}
					D\tilde{J}_V(v_n,t_n)[(0,1)]\rightarrow0,~t_n\rightarrow1.
				\end{equation*}
				An explicit computation shows that
				\begin{equation*}
					\partial_t\big(\int_{\mathbb{R}^3}V(x)t^{3}u^2(tx)dx\big)=\int_{\mathbb{R}^3}V(x)(3t^3u^2(tx)+2t^3u(tx)\nabla u(tx)\cdot tx)dx
				\end{equation*}
				and
				\begin{equation*}
					\partial_t I(u^t)=\|\nabla u\|_2^2+\frac{1}{4}B(u)-\frac{3(p-2)}{2p}\|u\|_p^p,
				\end{equation*}
				then we have $\text{as}~n\rightarrow\infty$,
				\begin{equation*}
					\|\nabla u_n\|_2^2+\frac{1}{4}B(u_n)-\frac{3(p-2)}{2p}\|u_n\|_p^p+\frac{1}{2}\int_{\mathbb{R}^3}V(x)(3|u_n|^2+2v_n\nabla u_n\cdot x)dx\rightarrow0,
				\end{equation*}
				which means $u_n$ almost satisfies the Pohozaev identity and \eqref{e2.16} holds. 
				This completes the proof.
			\end{proof}
			
			We will now demonstrate that the $(PS)$ sequence obtained in Lemma \ref{lem2.7} is bounded in $H^1(\mathbb{R}^3)$.
			
			\begin{lemma}\label{lem3.1}
				Assume that $(V_3)-(V_4)$ hold. Let $\{u_n\}\subset \mathcal{S}_a$ be a $(PS)$ sequence for $J_V$ obtained in Lemma \ref{lem2.7}.  Then $\{u_n\}$ is bounded in $H^1(\mathbb{R}^3)$. Moreover, there exists a sequence of Lagrange multipliers with
				\begin{equation*}
					\lambda_n:=-\frac{DJ_V(u_n)[u_n]}{a^2}
				\end{equation*}
				and $\lambda_n\rightarrow \tilde{\lambda}>0$ for any $a\in(0,a_*)$, where
				\begin{equation*}
					a_*=\Big(\frac{6-p}{|6-2p|\hat{C}}\Big)^{\frac{1}{3}}\Big(\frac{\delta}{\tilde{\eta}}\Big)^{\frac{1}{6}}
				\end{equation*}
				and $\hat{C},\delta,\hat{\eta}$ are positive constants independent of $a$.
			\end{lemma}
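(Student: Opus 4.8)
The plan is to exploit the three asymptotic relations carried by the sequence $\{u_n\}$ obtained in Lemma \ref{lem2.7}: the energy level $J_V(u_n)\to m_{V,a}$, the approximate Pohozaev identity \eqref{e2.16}, and the Nehari-type identity produced by $\nabla_{\mathcal{S}_a}J_V(u_n)\to 0$. For brevity set $A_n:=\|\nabla u_n\|_2^2$, $B_n:=B(u_n)\ge 0$, $E_n:=\|u_n\|_p^p$, $C_n:=\int_{\R^3}V u_n^2\,dx\le 0$ (since $V\le 0$) and $T_n:=\frac12\int_{\R^3}V(3u_n^2+2u_n\nabla u_n\cdot x)\,dx$, so that $2J_V(u_n)=A_n+C_n+\tfrac12 B_n-\tfrac2p E_n=2m_{V,a}+o(1)$ while \eqref{e2.16} reads $A_n+\tfrac14 B_n-\tfrac{3(p-2)}{2p}E_n+T_n=o(1)$.

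First I would establish the $H^1$-bound. Since $\|u_n\|_2=a$ is fixed it suffices to bound $A_n$, and to this end I would take the linear combination $\tfrac{3(p-2)}{2p}\big(2J_V(u_n)\big)-\tfrac2p\big(\text{\eqref{e2.16}}\big)$, which is designed to cancel the supercritical term $E_n$ and leaves
\begin{equation*}
\tfrac{3p-10}{2p}A_n+\tfrac{3p-8}{4p}B_n+\tfrac{3(p-2)}{2p}C_n-\tfrac2p T_n=\tfrac{3(p-2)}{p}m_{V,a}+o(1).
\end{equation*}
Using $B_n\ge 0$ together with the bounds $|C_n|\le S^{-1}\|V\|_{\frac{3}{2}}A_n$ and $\big|\int_{\R^3}V u_n\nabla u_n\cdot x\,dx\big|\le S^{-1/2}\|\tilde{W}\|_3 A_n$ (the latter from $\tilde{W}=V|x|\in L^3$, Hölder's inequality and $D^{1,2}(\R^3)\hookrightarrow L^6(\R^3)$), the sign-indefinite potential terms are moved to the left and absorbed into the coefficient of $A_n$. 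The quantitative smallness encoded in $(V_4)$ (the inequality featuring $3p-10$) is precisely what guarantees that the resulting coefficient of $A_n$ stays strictly positive, yielding $A_n\le \Lambda m_{V,a}+o(1)$ for an explicit $\Lambda>0$, hence $\{u_n\}$ bounded in $H^1(\R^3)$.

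Next I would produce the multipliers and prove positivity. Boundedness of $\{u_n\}$ and $\nabla_{\mathcal{S}_a}J_V(u_n)\to 0$ give that $\lambda_n=-DJ_V(u_n)[u_n]/a^2$ is bounded and that $\nabla J_V(u_n)+\lambda_n u_n\to 0$ in $H^{-1}(\R^3)$; up to a subsequence $\lambda_n\to\tilde\lambda$. To control the sign I would mimic the computation of the case $V\ge 0$ (proof of Proposition \ref{prop-existence-sol}): combining the energy relation, \eqref{e2.16} and the identity $A_n+C_n+B_n-E_n=-\lambda_n a^2+o(1)$ so as to eliminate $A_n$ and $E_n$ produces
\begin{equation*}
(3p-10)a^2\lambda_n=2(6-p)m_{V,a}-2(p-2)C_n+2(p-2)T_n-2(p-3)B_n+o(1).
\end{equation*}
Here $-2(p-2)C_n\ge 0$ is favourable, the potential terms $C_n,T_n$ are controlled by the $(V_4)$-norms times $A_n$ (which the previous step bounds by a multiple of $m_{V,a}$), and the only genuinely dangerous term is $-2(p-3)B_n$, with coefficient $2(p-3)=|6-2p|$. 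Since the mountain-pass geometry forces $m_{V,a}\ge K_a>\delta$ with $\delta$ independent of $a$ (see \eqref{eqK-a}), while \eqref{local-term} gives $B_n\le \hat C a^3\|\nabla u_n\|_2\le \hat C a^3\sqrt{\tilde\eta\, m_{V,a}}$, the negative contribution is dominated by $2(6-p)m_{V,a}\ge 2(6-p)\delta$ as soon as $a<a_*$; balancing these two terms is exactly what yields the explicit threshold $a_*=\big(\tfrac{6-p}{|6-2p|\hat C}\big)^{1/3}\big(\tfrac{\delta}{\tilde\eta}\big)^{1/6}$. Passing to the limit then gives $(3p-10)a^2\tilde\lambda\ge c\,\delta>0$, i.e. $\tilde\lambda>0$.

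The hard part will be the sign-indefinite cross term $\int_{\R^3}V u_n\nabla u_n\cdot x\,dx$ inside $T_n$: it has no definite sign and must be absorbed using only $\tilde{W}\in L^3$, so both the $H^1$-bound and the lower bound for $\lambda_n$ rest on the sharp Sobolev constant $S$ and on the precise smallness in $(V_4)$; keeping the constants $\hat C,\tilde\eta$ uniform in $n$ is what allows them to be fed into $a_*$. A secondary technical point is that $\{u_n\}$ is only an \emph{approximate} constrained critical point, satisfying the Pohozaev identity merely up to $o(1)$ and with $u_n^-\to 0$ only in $L^2$ (cf. \eqref{eq2.15}); all the estimates above must therefore be carried out with $o(1)$ remainders throughout, and one passes to the limit only at the very end.
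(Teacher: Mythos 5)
Your proposal is correct and follows essentially the same route as the paper: you combine the same three asymptotic relations (your direct elimination of $E_n$ reproduces, up to the factor $\frac{3p-8}{2p}$, exactly the paper's identity giving $A_n\le\tilde\eta\, m_{V,a}$, and your formula $(3p-10)a^2\lambda_n=2(6-p)m_{V,a}-2(p-2)C_n+2(p-2)T_n-2(p-3)B_n+o(1)$ is precisely the paper's limit identity), with the same use of $(V_4)$, of $B_n\le \hat C a^3\sqrt{\tilde\eta\, m_{V,a}}$, of $m_{V,a}>\delta$, and the same threshold $a_*$. One point to make explicit in a full write-up: in both steps you must first merge the $\int_{\R^3}Vu_n^2$ part of $T_n$ with the other $\int_{\R^3}Vu_n^2$ terms (yielding the net coefficients $(p-4)^+$ resp.\ $(p-2)$ before invoking $\bigl|\int_{\R^3}Vu_n^2\,dx\bigr|\le S^{-1}\|V\|_{3/2}A_n$), since estimating $|C_n|$ and $|T_n|$ separately inflates the $\|V\|_{3/2}$-coefficient beyond what the smallness condition $(V_4)$ covers.
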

			\begin{proof}
				It follows from Lemma \ref{lem2.7} that $\{u_n\}$ almost satisfies the following Pohozaev identity
				\begin{equation}\label{eq3.3}
					\|\nabla u_n\|_2^2+\frac{1}{4}B(u_n)-\frac{3(p-2)}{2p}\|u_n\|_p^p+\frac{1}{2}\int_{\mathbb{R}^3}V(x)(3|u_n|^2+2u_n\nabla u_n\cdot x)dx\rightarrow0.
				\end{equation}
				Setting
				\begin{equation}\label{eq3.4}
					\begin{aligned}
						&\tilde{A}_n:=\|\nabla u_n\|_2^2, ~~~\tilde{B}_n:=B(u_n)=\int_{\mathbb{R}^3}\int_{\mathbb{R}^3}\frac{|u_n(x)|^2|u_n(y)|^2}{|x-y|}dxdy,\\
						&~\tilde{C}_n:=
						-\int_{\mathbb{R}^3}V(x)u_n^2dx,~\tilde{D}_n:=-\int_{\mathbb{R}^3}V(x)u_n\nabla u_n\cdot xdx,~\tilde{E}_n:=\int_{\mathbb{R}^3}|u_n|^pdx,
					\end{aligned}
				\end{equation}
				we have
				\begin{equation}\label{eq3.5}
					\tilde{A}_n-\tilde{C}_n+\frac{1}{2}\tilde{B}_n-\frac{2}{p}\tilde{E}_n=2m_{V,a}+o(1),~\text{as}~n\rightarrow\infty,
				\end{equation}
				\begin{equation}\label{eq3.6}
					\tilde{A}_n+\frac{1}{4}\tilde{B}_n-\frac{3(p-2)}{2p}\tilde{E}_n-\frac{3}{2}\tilde{C}_n-\tilde{D}_n=o(1),~\text{as}~n\rightarrow\infty,
				\end{equation}
				\begin{equation}\label{eq3.7}
					\tilde{A}_n-\tilde{C}_n+\lambda_na^2+\tilde{B}_n=\tilde{E}_n+o(1)(a_n^{1/2}+1),~\text{as}~n\rightarrow\infty.
				\end{equation}
				By (\ref{eq3.5}) and (\ref{eq3.6}), we have
				\begin{equation}\label{eq3.8}
					\frac{3p-8}{p}\tilde{E}_n=2m_{V,a}+\tilde{A}_n-2\tilde{C}_n-2\tilde{D}_n+o(1).
				\end{equation}
				Using (\ref{eq3.5}) again, one gets
				\begin{equation*}
					\frac{3p-10}{3p-8}\tilde{A}_n+\frac{1}{2}\tilde{B}_n= \frac{3p-12}{3p-8}\tilde{C}_n- \frac{4}{3p-8}\tilde{D}_n+ \frac{6(p-2)}{3p-8}m_{V,a}+o(1).
				\end{equation*}
				Since $|\tilde{C}_n|\leq S^{-1}\|V\|_{\frac{3}{2}}\tilde{A}_n$, ~$|\tilde{D}_n|\leq S^{-\frac{1}{2}}\|\tilde{W}\|_3\tilde{A}_n, $ it follows that
				\begin{equation*}
					\frac{3p-10}{3p-8}\tilde{A}_n\leq \frac{3(p-4)^+}{3p-8}S^{-1}\|V\|_{\frac{3}{2}}\tilde{A}_n+\frac{4}{3p-8}S^{-\frac{1}{2}}\|\tilde{W}\|_3\tilde{A}_n+ \frac{6(p-2)}{3p-8}m_{V,a},
				\end{equation*}
				where $(p-4)^+=\max\{p-4,0\}$,
				which leads to $\tilde{A}_n\leq\tilde{ \eta}m_{V,a}<\tilde{ \eta} m_a$, here we have used the second inequality of $(V_4)$ and the fact
				\begin{equation*}
					\tilde{ \eta}:=\frac{ 6(p-2)}{3p-10- 3(p-4)^+S^{-1}\|V\|_{\frac{3}{2}}-4S^{-\frac{1}{2}}\|\tilde{W}\|_3}>0.
				\end{equation*}
				Therefore, $\|\nabla u_n\|_2$ is bounded in $\R$. As a consequence we get that $\tilde{C}_n,\tilde{D}_n$ are both bounded, then by H\"{o}lder inequality, one gets  $\tilde{B}_n,\tilde{E}_n,\lambda_n$ are also bounded. Along a subsequence, there hold
				\begin{equation*}
					\tilde{A}_n\rightarrow \tilde{A}\geq0,~\tilde{B}_n\rightarrow\tilde{ B}\geq0, ~\tilde{C}_n\rightarrow \tilde{C}\geq0, ~\tilde{D}_n\rightarrow \tilde{D}\in \mathbb{R},\lambda_n\rightarrow\tilde{\lambda}\in \mathbb{R}.
				\end{equation*}
				It follows from $(V_4)$ that
				\begin{equation}
					\begin{aligned}
						\tilde{\lambda}a^2&=\frac{p-2}{p}E-\frac{1}{2}B-2m_{V,a}\\&
						=\frac{2(p-2)}{3(p-2)-4}\big(2m_{V,a}-\frac{1}{2}\tilde{C}-\tilde{D}-\frac{1}{4}\tilde{B}\big)-\frac{1}{2}\tilde{B}-2m_{V,a}\\&
						=\frac{6-p}{3(p-2)-4}2m_{V,a}-\frac{p-2}{3(p-2)-4}\tilde{C}-\frac{2(p-2)}{3(p-2)-4}\tilde{D}
						+\frac{6-2p}{3(p-2)-4}\tilde{B}\\&
						\geq\frac{6-p}{3(p-2)-4}2m_{V,a}-\frac{p-2}{3(p-2)-4}S^{-1}\|V\|_{\frac{3}{2}}\tilde{A}_n-
						\frac{2(p-2)}{3(p-2)-4}S^{-\frac{1}{2}}\|\tilde{W}\|_3\tilde{A}_n\\
						&~~~+\frac{6-2p}{3(p-2)-4}\tilde{B}\\&
						\geq \frac{6-p}{3(p-2)-4}2m_{V,a}-\frac{p-2}{3(p-2)-4}S^{-1}\|V\|_{\frac{3}{2}}\tilde{ \eta}m_{V,a}-
						\frac{2(p-2)}{3(p-2)-4}S^{-\frac{1}{2}}\|\tilde{W}\|_3\tilde{ \eta}m_{V,a}\\
						&~~~+\frac{6-2p}{3(p-2)-4}\tilde{B}.
					\end{aligned}
				\end{equation}
				By the Gagliardo-Nirenberg inequality,  there exists constant $\hat{C}$ independent of $a$ such that
				\begin{equation*}
					\begin{aligned}
						\tilde{B}&\leq \limsup_{n\to\infty}\int_{\mathbb{R}^3}\phi_{u_n}u_n^2 dx\leq \hat{C}\limsup_{n\to\infty}\|u_n\|_{\frac{12}{5}}^4\\&\leq \hat{C}\limsup_{n\to\infty} \|\nabla u_n\|_2\|u_n\|_2^{3}\le \hat{C} \tilde{A}^{\frac{1}{2}}a^{3}\leq \hat{C}(\tilde{ \eta}m_{V,a})^{\frac{1}{2}}a^3,
					\end{aligned}
				\end{equation*}
				so that $\tilde{\lambda}>0$ provided
				\begin{equation}
					\label{eq-lambda>0}
					2(6-p)-(p-2)S^{-1}\|V\|_{\frac{3}{2}}\tilde{ \eta}-
					2(p-2)S^{-\frac{1}{2}}\|\tilde{W}\|_3\tilde{ \eta}+(6-2p)\hat{C}\tilde{\eta}^{\frac{1}{2}}\frac{a^3}{(m_{V,a})^{\frac{1}{2}}}>0.
				\end{equation}
				By the definition, we know $m_{V,a}>\delta>0$ with $\delta$ independent of $a$, thus
				\begin{equation*}
					|6-2p|\hat{C}\tilde{\eta}^{\frac{1}{2}}\frac{a^3}{(m_{V,a})^{\frac{1}{2}}}< |6-2p|\hat{C}\tilde{\eta}^{\frac{1}{2}}\frac{a^3}{\delta^{\frac{1}{2}}}.
				\end{equation*}
				Let $a_*=\Big(\frac{6-p}{|6-2p|\hat{C}}\Big)^{\frac{1}{3}}\Big(\frac{\delta}{\tilde{\eta}}\Big)^{\frac{1}{6}}$. Then one can check for any $a\in(0,a_*)$ 
				\begin{equation*}
					|6-2p|\hat{C}\tilde{\eta}^{\frac{1}{2}}\frac{a^3}{(m_{V,a})^{\frac{1}{2}}}<6-p.
				\end{equation*}
				By combining this with the second inequality in condition $(V_4)$, we can confirm (\ref{eq-lambda>0}) holds.
				
				This completes the proof.
			\end{proof}
			\begin{lemma}\label{lemma4.2}
				Assume that $(V_3)-(V_4)$ hold. If $v\in \mathcal{S}_a$ is a normalized solution of (\ref{p}) for  some $a>0$, then $J_V(v)\geq0$.
			\end{lemma}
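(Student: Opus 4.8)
The plan is to deduce $J_V(v)\ge 0$ from two identities satisfied by any normalized solution — the energy identity, which is merely the definition of $J_V(v)$, and the Pohozaev identity — by eliminating the term $\|v\|_p^p$ between them and then absorbing the potential contributions into the gradient term via $(V_4)$. Notably, no information about the Lagrange multiplier $\lambda$ and no smallness of $a$ will be used, which is consistent with the statement holding for every $a>0$.

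Following the notation of \eqref{eq3.4}, I would set, for the normalized solution $v$,
\[
\tilde A:=\|\nabla v\|_2^2,\quad \tilde B:=B(v),\quad \tilde C:=-\int_{\R^3}Vv^2\,dx,\quad \tilde D:=-\int_{\R^3}Vv\,\nabla v\cdot x\,dx,\quad \tilde E:=\|v\|_p^p,
\]
and observe that $\tilde A,\tilde B,\tilde C\ge 0$, the last because $V\le 0$ by $(V_3)$. By the definition of the functional,
\[
2J_V(v)=\tilde A-\tilde C+\tfrac12\tilde B-\tfrac2p\tilde E,
\]
whereas, writing the exact analogue of \eqref{e2.16}, the Pohozaev identity reads
\[
\tilde A+\tfrac14\tilde B-\tfrac{3(p-2)}{2p}\tilde E-\tfrac32\tilde C-\tilde D=0.
\]
I would justify this identity by differentiating $t\mapsto J_V(v^t)$ at $t=1$ and using that $v$ is a constrained critical point, so that the tangential derivative vanishes; the potential term $\frac{t^3}{2}\int_{\R^3}V(y)v^2(ty)\,dy$ has derivative $\frac12\int_{\R^3}V(3v^2+2v\nabla v\cdot x)\,dx=-\frac32\tilde C-\tilde D$ at $t=1$, and keeping it in this form (rather than integrating by parts to $-\frac12\int\nabla V\cdot x\,v^2$) keeps every integral finite under the mere hypotheses $V\in L^{3/2}$ and $\tilde W=V|\cdot|\in L^3$ from $(V_4)$.

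Solving the Pohozaev identity for $\tilde E$ and substituting into the energy identity removes $\tilde E$ and, after collecting terms, gives
\[
2J_V(v)=\frac{3p-10}{3(p-2)}\tilde A+\left(\frac12-\frac{1}{3(p-2)}\right)\tilde B+\frac{4-p}{p-2}\tilde C+\frac{4}{3(p-2)}\tilde D.
\]
Since $p>\frac{10}{3}$ forces $3(p-2)>4$, the coefficients of $\tilde A$ and $\tilde B$ are strictly positive and $\tilde B\ge 0$, so these two terms are nonnegative. For the other two I would use the Sobolev bounds $\tilde C\le S^{-1}\|V\|_{\frac32}\tilde A$ and $|\tilde D|\le S^{-\frac12}\|\tilde W\|_3\tilde A$ already employed in the proof of Lemma \ref{lem3.1}, together with the elementary inequality $\frac{4-p}{p-2}\tilde C\ge-\frac{(p-4)^+}{p-2}S^{-1}\|V\|_{\frac32}\tilde A$, to obtain
\[
2J_V(v)\ge\frac{\tilde A}{3(p-2)}\Big[(3p-10)-3(p-4)^+S^{-1}\|V\|_{\frac32}-4S^{-\frac12}\|\tilde W\|_3\Big].
\]

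It then remains to note that the bracket is strictly positive: discarding the nonnegative quantities $\frac{2(p-2)^2}{6-p}$ and $\frac{3(p-2)^2}{6-p}$ from the two coefficients appearing in $(V_4)$ yields exactly $3(p-4)^+S^{-1}\|V\|_{\frac32}+4S^{-\frac12}\|\tilde W\|_3<3p-10$, whence $J_V(v)\ge0$. I expect the one genuine difficulty to be the rigorous justification of the exact Pohozaev identity for a general normalized solution under the weak regularity of $V$ permitted by $(V_4)$; writing the potential contribution as $\frac12\int V(3v^2+2v\nabla v\cdot x)\,dx$ sidesteps any use of $\nabla V$, so that the differentiation under the integral sign can be carried out by dominated convergence exactly as in the proof of Lemma \ref{lemma-pohozaev}.
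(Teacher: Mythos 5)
Your proposal is correct and follows essentially the same route as the paper: the paper likewise substitutes the Pohozaev identity \eqref{e2.16} into $J_V(v)$ to eliminate $\|v\|_p^p$ (your displayed formula for $2J_V(v)$ is exactly twice the paper's \eqref{eq3.10}), then discards the nonnegative $B(v)$ and $(4-p)\tilde C$ contributions for $p<4$, bounds $\tilde C\le S^{-1}\|V\|_{\frac32}\|\nabla v\|_2^2$ and $|\tilde D|\le S^{-\frac12}\|\tilde W\|_3\|\nabla v\|_2^2$, and invokes the second inequality of $(V_4)$ for positivity of the resulting bracket. Your additional care in justifying the Pohozaev identity (keeping the potential term in the form $\tfrac12\int V(3v^2+2v\nabla v\cdot x)\,dx$ so that $\nabla V$ never appears) is a welcome refinement of a step the paper simply asserts.
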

			\begin{proof}
				Since $v$ is a solution of (\ref{p}), then $v$ satisfies the Pohozaev identity \eqref{e2.16}, i.e.,
				\begin{equation}\label{eq3.9}
					\begin{aligned}
						\frac{1}{p}\|v\|_p^p=& \frac{2}{3(p-2)}\|\nabla v\|_2^2+\frac{1}{6(p-2)}B(v)+\frac{1}{p-2}\int_{\mathbb{R}^3}V(x)|v|^2dx\\&
						+\frac{2}{3(p-2)}\int_{\mathbb{R}^3}V(x)v\nabla v\cdot xdx.
					\end{aligned}
				\end{equation}
				From where one gets
				\begin{equation}\label{eq3.10}
					\begin{aligned}
						J_V(v )&=\frac{1}{2}\int_{\mathbb{R}^3}|\nabla v|^2dx+\frac{1}{2}\int_{\mathbb{R}^3}V(x)v^2dx+\frac{1}{4}B(v)
						-\frac{1}{p}\int_{\mathbb{R}^3}|v|^pdx\\
						&= \big(\frac{1}{2}-\frac{2}{3(p-2)}\big)\|\nabla v\|_2^2+\big(\frac{1}{4}-\frac{1}{6(p-2)}\big)B(v)+\big(\frac{1}{2}-\frac{1}{p-2}\big)\int_{\mathbb{R}^3}V(x)|v|^2dx
						\\&~~~-\frac{2}{
3(p-2)}\int_{\mathbb{R}^3}V(x)v\nabla v\cdot xdx\\&
						=\frac{3p-10}{6(p-2)}\|\nabla v\|_2^2+\frac{3p-8}{12(p-2)}B(v)+\frac{p-4}{2(p-2)}\int_{\mathbb{R}^3}V(x)|v|^2dx \\&~~~-\frac{2}{3(p-2)}\int_{\mathbb{R}^3}V(x)v\nabla v\cdot xdx.
					\end{aligned}
				\end{equation}
				Since $V(x)\leq0$ and $p\in(\frac{10}{3},6)$, then using the second inequality in $(V_4)$ again, we get
				\begin{equation*}
					\begin{aligned}
						J_V(v )&\geq\frac{3p-10}{6(p-2)}\|\nabla v\|_2^2-\frac{(p-4)^+}{2(p-2)}S^{-1}\|V\|_{\frac{3}{2}}\|\nabla v\|_2^2-\frac
						{2}{3(p-2)}\int_{\mathbb{R}^3}V(x)v\nabla v\cdot xdx\\&\geq
						\Big( \frac{3p-10}{6(p-2)}-\frac{(p-4)^+}{2(p-2)}S^{-1}\|V\|_{\frac{3}{2}}-\frac{2}{3(p-2)}S^{-\frac{1}{2}}\|\tilde{W}\|_3\Big)\|\nabla u\|_2^2>0.
					\end{aligned}
				\end{equation*}
				This proves the lemma.
			\end{proof}
			\begin{remark}
				Lemma \ref{lemma4.2} reveals the fact that under the conditions $(V_3)-(V_4)$, (\ref{p}) does not admit normalized solutions with negative energy.
			\end{remark}
			\begin{proof}[\textbf{Proof of Theorem \ref{thm1.3}}]  By Lemma  \ref{lem2.7} and  \ref{lem3.1}, we know there is $a_*>0$  such that for any $a\in(0,a_*)$, there exists a bounded $(PS)$ sequence $\{u_n\}\subset \mathcal{S}_a$ for $J_V$ at level $m_{V,a}$ and the associated Lagrange multipliers $\lambda_n\rightarrow\tilde{\lambda}>0$. Then there exists $\tilde{u}\in H^1(\R^3)$ such that, along a subsequence, $  u_n\rightharpoonup \tilde{u} ~\text{in}~H^1(\mathbb{R}^3)$ as $n\rightarrow\infty$.

			\vskip0.1in
			To finish the proof of Theorem \ref{thm1.3}, we need to prove as $n\rightarrow\infty$,
			\begin{equation*}
				u_n\rightarrow \tilde{u}\quad \text{strongly\quad in}\quad H^1(\R^3).
			\end{equation*}
			Since  $\{u_n\}\subset \mathcal{S}_a$ is a $(PS)$ sequence of $J_V$, then for any $\psi\in H^1(\mathbb{R}^3)$, it holds
			\begin{equation*}
				\begin{aligned}
					&\int_{\mathbb{R}^3}\nabla u_n\nabla \psi dx+\int_{\R^3}V(x)u_n\psi dx+\int_{\mathbb{R}^3}(|x|^{-1}*u_n^2)u_n\psi dx
					\\&=-\lambda_n\int_{\mathbb{R}^3}u_n\psi dx+\int_{\mathbb{R}^3}|u_n|^{p-2}u_n\psi dx+o(1)\|\psi\|
				\end{aligned}
			\end{equation*}
			and using the fact $\lambda_n\rightarrow\tilde{\lambda}$, one gets
			\begin{equation*}
				\begin{aligned}
					&\int_{\mathbb{R}^3}\nabla u_n\nabla \psi dx+\int_{\R^3}V(x)u_n\psi dx+\int_{\mathbb{R}^3}(|x|^{-1}*u_n^2)u_n\psi dx
					\\&=-\tilde{\lambda}\int_{\mathbb{R}^3}u_n\psi dx+\int_{\mathbb{R}^3}|u_n|^{p-2}u_n\psi dx+o(1)\|\psi\|.
				\end{aligned}
			\end{equation*}
			The above results show that $\{u_n\}$ is actually a $(PS)$ sequence for $J_{V,\tilde{\lambda}}$ at level $m_{V,a}+\frac{\tilde{\lambda}}{2}a^2$. Since $(V_4)$ holds, we have $\|V\|_{\frac{3}{2}}=\|V^-\|_{\frac{3}{2}}<S$. Hence we can apply the splitting Lemma \ref{lem2.2} to say that
			\begin{equation*}
				u_n=\tilde{u}+\sum_{j=1}^kw^j(\cdot-y_n^j )+o(1),
			\end{equation*}
			\begin{equation*} 
				\|u_n\|_2^2=\|\tilde{u}\|_2^2+\sum_{j=1}^k\|w^j\|_2^2+o(1)
			\end{equation*}
			and
			\begin{equation*}
				J_{V,\tilde{\lambda}}(u_n)= J_{V,\tilde{\lambda}}(\tilde{u})+\sum_{j=1}^kJ_{\infty,\tilde{\lambda}}(w^j)+o(1),
			\end{equation*}
			where $w^j$ ($1\leq j\leq k$) is the solution
			to the limit equation
			\begin{equation*}
				-\Delta u+\tilde{\lambda} u+(|x|^{-1}*|u|^2)u=|u|^{p-2}u.
			\end{equation*}
			If $k\geq1$, then $\|\tilde{u}\|_2<a$. Thus one has
			\begin{equation*} m_{V,a}+\frac{\tilde{\lambda}}{2}a^2=J_V(u)+\frac{\tilde{\lambda}}{2}\gamma^2+\sum_{j=1}^{k}\left(J_{\infty}(w^j)+\frac{\tilde{\lambda}}{2}\rho_j^2\right),
			\end{equation*}
			where $\gamma:=\|u\|_2$, $\rho_j:=\|w^j\|_2$. Note that $a^2=\gamma^2+\sum_{j=1}^{k}\rho_j^2$, then we have
			\begin{equation}\label{eq3.11}
				m_{V,a} =J_V(u) +\sum_{j=1}^{k} J_{\infty}(w^j) .
			\end{equation}
			Recall that for $0<a_1\leq a_2,$ we have $c_{a_1}\geq c_{a_2}$ (see \cite[Theorem 1.2-(ii)]{BJ2013}), therefore, $J_{\infty}(w^j)\geq c_a$. Gathering  lemma \ref{lemma4.2} with (\ref{eq3.11}), we obtain that $m_{V,a}\geq c_a$ which contradicts the fact $m_{V,a}<c_a$ (see \eqref{comparison}). Thus $k=0$ and $u_n\rightarrow \tilde{u}$ strongly in $H^1(\mathbb{R}^3)$. Since $\{u_n\}$ is non-negative, we know $\tilde{u}\geq0$, which concludes the proof of Theorem \ref{thm1.3}. 
			\end{proof}


            \noindent{\bf Declarations of interest}
			
			The authors have no interest to declare.

			~\\
			\color{black}\noindent{\bf Acknowledgment}
			
			X.Q. Peng is partially supported by the China Scholarship Council (No.202208310170). M. Rizzi is partly supported by Justus Liebig University and the DFG project number $62202684$. The authors thank Prof. Bartsch for his suggestions to improve this work.


				
			\end{document}